\renewcommand*\@fnsymbol[1]{\the#1}  
\title{Supersaturation in Posets and Applications Involving the Container Method}
\author{Jonathan A. Noel\thanks{Department of Mathematics, ETH, Raemistrasse 101, HG G 33.4, 8092 Zurich, Switzerland. E-mail: \texttt{\{jonathan.noel,benjamin.sudakov\}@math.ethz.ch}.} $^{\text{, }}$\thanks{Part of this work was completed while the first author was a DPhil student at the University of Oxford.}
\and Alex Scott\thanks{Mathematical Institute, University of Oxford, Andrew Wiles Building, Radcliffe Observatory Quarter, Woodstock Road, Oxford, OX2 6GG, UK. E-mail: \texttt{scott@maths.ox.ac.uk}.}
\and Benny Sudakov\footnotemark[1]
}
\newtheoremstyle{case}{}{}{\normalfont}{}{\itshape}{:}{ }{}
\newtheorem{thm}[equation]{Theorem}
\newtheorem{lem}[equation]{Lemma}
\newtheorem{prop}[equation]{Proposition}
\newtheorem{conj}[equation]{Conjecture}
\newtheorem{cor}[equation]{Corollary}
\newtheorem{claim}[equation]{Claim}
\theoremstyle{definition}
\newtheorem{defn}[equation]{Definition}
\newtheorem{rem}[equation]{Remark}
\newtheorem*{noteAdded}{Note added in the proof}
\newtheoremstyle{case}{}{}{\normalfont}{}{\itshape}{\normalfont:}{ }{}
\theoremstyle{case}
\newtheorem{case}{Case}
\newtheorem{case2}{Case}
\newtheorem{case4}{Case}
\numberwithin{equation}{section}
\DeclareMathOperator{\comp}{comp}
\newcommand{\qbinom}[3]{\left[\genfrac{}{}{0pt}{}{#1}{#2}\right]_{#3}}
\newcommand{\Vectors}[2]{\mathcal{V}\left(#1,#2\right)}
\newcommand\numberthis{\addtocounter{equation}{1}\tag{\theequation}}
\date{}
\begin{document}

\maketitle

\begin{abstract}
We consider `supersaturation' problems  in partially ordered sets (posets) of the following form. Given a finite poset $P$ and an integer $m$ greater than the cardinality of the largest antichain in $P$, what is the minimum number of comparable pairs in a subset of $P$ of cardinality $m$? We provide a framework for obtaining lower bounds on this quantity based on counting comparable pairs relative to a random chain and apply this framework to obtain supersaturation results for three classical posets: the boolean lattice, the collection of subspaces of $\mathbb{F}_q^n$ ordered by set inclusion and the set of divisors of the square of a square-free integer under the `divides' relation. The bound that we obtain for the boolean lattice can be viewed as an approximate version of a known theorem of Kleitman~\cite{superKleitman}.

In addition, we apply our supersaturation results to obtain (a) upper bounds on the number of antichains in these posets and (b) asymptotic bounds on the cardinality of the largest  antichain in $p$-random subsets of these posets which hold with high probability (for $p$ in a certain range). The proofs of these results rely on a  `container-type' lemma for posets which generalises a result of Balogh, Mycroft and Treglown~\cite{RandomSperner}. We also state a number of open problems regarding supersaturation in posets and counting antichains.
\end{abstract}

\section{Introduction}

A large part of extremal combinatorics is concerned with determining the maximum size of a combinatorial object subject to the constraint that it does not contain a certain `forbidden' substructure. One example of such an extremal problem is to determine the cardinality of the largest subset of a partially ordered set (poset) $P$ which does not contain a comparable pair; i.e. to determine the cardinality of the largest \emph{antichain}. Perhaps the most famous result in this area is Sperner's Theorem~\cite{Sperner} which says that the size of the largest antichain in the \emph{boolean lattice} poset $\mathcal{P}(n)$, consisting of all subsets of $[n]:=\{1,\dots,n\}$ ordered by inclusion, is $\binom{n}{\left\lfloor n/2\right\rfloor}$. 

A closely related problem is to determine the minimum number of copies of a particular substructure in a combinatorial object of prescribed size. These so called `supersaturation' problems have a long history starting with a result of Rademacher (unpublished, see~\cite{Rademacher}) which says  that every graph with $n$ vertices and more than $n^2/4$ edges must contain at least $\left\lfloor n/2\right\rfloor$ triangles. For additional background on supersaturation problems in extremal combinatorics see, e.g.,~\cite{Reiher,Patkos,additive, DGS2, SS2} and the references therein.

Our focus in this paper is on a natural supersaturation problem for posets: \emph{given a finite poset $P$ and a positive integer $m\leq |P|$, what is the minimum number of comparable pairs in a subset $S$ of $P$ of cardinality $m$?}  An early result in this direction is due to Kleitman~\cite{superKleitman} who proved that, for $1\leq m\leq 2^n$, the number of comparable pairs in a subset of $\mathcal{P}(n)$ of cardinality $m$ is minimised by a collection of $m$ subsets of $[n]$ whose cardinalities are as close to $n/2$ as possible. This result strengthens Sperner's Theorem and settled a conjecture of Erd\H{o}s and Katona (see~\cite{DGKS,DGS, BaloghWagnerKleitman} for further results). 

In this paper, we prove new supersaturation results for two classical posets (Theorems~\ref{vecSpThm} and~\ref{multisetThm} below). Our method involves counting comparisons relative to a random chain and should be applicable to posets beyond those covered in this paper. We also illustrate the method by using it to prove an approximate version of the supersaturation theorem of Kleitman~\cite{superKleitman} in the boolean lattice (Theorem~\ref{booleanThm}). All three of these supersaturation results will be applied to prove bounds on the number of antichains in these posets and bounds on the size of antichains in random subsets of these posets.

Throughout the paper, for a subset $S$ of a poset $P$, we let $\comp(S)$ denote the number of comparable pairs in $S$.

\begin{defn}
Given a prime power $q$ and integers $n \geq 1$  let $\Vectors{q}{n}$ be the poset of all subspaces of $\mathbb{F}_q^n$ ordered by inclusion. 
For an integer $0\leq i\leq n$, the \emph{$q$-binomial coefficient} $\qbinom{n}{i}{q}$ is the number of subspaces of $\mathbb{F}_q^n$ of dimension $i$.
\end{defn}

Elementary counting shows that 
\[\qbinom{n}{i}{q}=\prod_{j=0}^{i-1}\frac{1-q^{n-j}}{1-q^{j+1}}.\]
One may also observe that, for fixed $n$ and $q$, the sequence $\qbinom{n}{0}{q},\dots,\qbinom{n}{n}{q}$ is unimodal and satisfies $\qbinom{n}{i}{q} = \qbinom{n}{n-i}{q}$ for all $0\leq i\leq n$. We prove the following theorem.

\begin{thm}
\label{vecSpThm}
Let $q$ be a prime power and $k$ be a fixed positive integer. There exists a constant $n_0(k)$ such that if $n\geq n_0(k)$ and $S\subseteq \Vectors{q}{n}$ has cardinality at least 
\[\sum_{r=0}^{k-1}\qbinom{n}{\left\lceil\frac{n-k+1+2r}{2}\right\rceil}{q}+t,\] then
\[\comp(S)\geq t\qbinom{\left\lceil (n+k)/2\right\rceil}{k}{q}.\]
\end{thm}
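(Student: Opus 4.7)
I will use the random-chain framework highlighted in the introduction. Let $C$ be a uniformly random maximal chain in $\Vectors{q}{n}$, i.e., a flag $\{0\} = U_0 \subsetneq U_1 \subsetneq \cdots \subsetneq U_n = \mathbb{F}_q^n$. Elementary counting gives $\Pr[V \in C] = \qbinom{n}{\dim V}{q}^{-1}$ for every subspace $V$, and for any proper pair $A \subsetneq B$,
\[
\Pr[A,B \in C] \;=\; \frac{1}{\qbinom{n}{\dim B}{q}\,\qbinom{\dim B}{\dim A}{q}}.
\]

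Introduce the LYM-type sum $\Sigma(S) := \sum_{V \in S} \qbinom{n}{\dim V}{q}^{-1} = \mathbb{E}[|S\cap C|]$. Because the $q$-binomial coefficients are unimodal in their lower index and peak at the middle ranks, the minimum of $\Sigma$ over subsets of cardinality $M+t$ (where $M$ denotes the sum of the $k$ middle $q$-binomials) is attained by filling the $k$ middle ranks completely, contributing exactly $k$ to $\Sigma$, and placing the remaining $t$ elements at the rank just outside the middle with largest $q$-binomial coefficient $L$. This gives $\Sigma(S) \geq k + t/L$.

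Since every two elements of a chain are comparable, $\comp(S\cap C) = \binom{|S\cap C|}{2}$. The elementary inequality $\binom{N}{2} \geq k(N-k)$, valid for integer $N \geq 0$ and $k \geq 1$, combined with linearity of expectation yields
\[
\mathbb{E}[\comp(S\cap C)] \;\geq\; k\bigl(\Sigma(S) - k\bigr) \;\geq\; \frac{kt}{L}.
\]
On the other hand, expanding the expectation pair-by-pair and upper-bounding each $\Pr[A,B\in C]$ by an appropriate quantity $p^*$ gives $\comp(S) \geq kt/(L p^*)$. Invoking the classical $q$-binomial identity $\qbinom{n}{b}{q}\qbinom{b}{a}{q} = \qbinom{n}{a}{q}\qbinom{n-a}{b-a}{q}$ should then convert this into the desired bound $t\,\qbinom{\lceil(n+k)/2\rceil}{k}{q}$.

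The main obstacle is the choice of $p^*$. A uniform upper bound on $\Pr[A,B\in C]$ over all comparable pairs is far too weak, since for pairs with $\dim A$ near $0$ or $\dim B$ near $n$ this probability can be as large as $1$. To obtain the sharp factor $\qbinom{\lceil(n+k)/2\rceil}{k}{q}$ one presumably has to partition the comparable pairs by the dimensions of their endpoints, applying a tailored bound within each rank-class, or else replace the uniform maximal chain by a non-uniform weighted random chain tuned to the conjectured extremal configuration (all middle-rank subspaces together with $t$ additional subspaces at the next-best rank). The hypothesis $n \geq n_0(k)$ is then used to absorb the lower-order error terms arising from these refined estimates, since for fixed $k$ the targeted $q$-binomial $\qbinom{\lceil(n+k)/2\rceil}{k}{q}$ dominates all correction factors as $n \to \infty$.
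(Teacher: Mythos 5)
You have the right general flavour (counting comparable pairs relative to a random maximal chain, a la Lubell/Das--Gan--Sudakov), and your formulas for $\Pr[V\in C]$ and $\Pr[A,B\in C]$, and the inequality $\binom{N}{2}\geq k(N-k)$, are all correct. But the core of the proof is missing, and you say so yourself: the step that converts the expectation bound into a bound on $\comp(S)$ requires a uniform upper bound $p^*$ on $\Pr[A,B\in C]$ that simply does not exist. As you note, for $A=\{0\}$ and $B=\mathbb{F}_q^n$ this probability is $1$, and more generally for pairs $(A,B)$ with $\dim B - \dim A$ small but near the extremes the conditional probability $\Pr[B\in C\mid A\in C]=\qbinom{n-\dim A}{\dim B-\dim A}{q}^{-1}$ is far too large. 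Your proposed remedies (partition pairs by endpoint dimensions, or reweight the chain) are stated as possibilities, not carried out, and in fact the reweighting idea is not what is needed here.

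The gap is closed in the paper by three structural devices that your outline does not contain. First, one partitions $\Vectors{q}{n}$ into $k$ subposets $P_0,\dots,P_{k-1}$ according to dimension modulo $k$, and works with $S_r := S\cap P_r$ separately; this forces the rank gap between consecutive levels inside each $P_r$ to be exactly $k$, which is what ultimately makes the conditional probabilities as small as $\qbinom{\lceil (n+k)/2\rceil}{k}{q}^{-1}$. Second, one runs an induction on $t_r = |S_r|-\qbinom{n}{\lceil (n-k+1+2r)/2\rceil}{q}$ so that one may assume no element of $S_r$ is in very many comparable pairs, and then uses a shifting argument (replace each subspace of dimension below $k$ in $S_r$ by a comparable subspace of dimension $k$ larger outside $S_r$) to eliminate the extreme low dimensions entirely; this kills the pairs where $\Pr[A,B\in C]$ is close to $1$. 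Third, rather than bounding a single number $p^*=\max\Pr[A,B\in C]$, one orients each comparable pair as an arc pointing toward the dimension closer to $n/2$ (with a tie-breaking rule when $\dim x + \dim y = n$) and applies the general Lemma~\ref{randomCount}, which only needs $\Pr[y\in C\mid x\in C]$ to be small along the chosen orientation. Without the mod-$k$ partition, the shifting step, and the oriented-pair bookkeeping, the approach you sketch cannot produce the sharp factor $\qbinom{\lceil (n+k)/2\rceil}{k}{q}$, and as written the argument stops exactly at the point where the real work begins.
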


It is easy to see that Theorem~\ref{vecSpThm} is best possible when $k=1$ and $0\leq t \leq \qbinom{n}{\left\lfloor (n-1)/2\right\rfloor}{q}$ as the set $S$ consisting of all subspaces of dimension $\left\lceil n/2\right\rceil$ and $t$ subspaces of dimension $\left\lfloor (n-1)/2\right\rfloor$ contains exactly $ t\qbinom{\left\lceil (n+1)/2\right\rceil}{1}{q}$ comparable pairs. 

\begin{defn}
Given $n\geq 1$, we define a poset on $\{0,1,2\}^n$ where $x\leq y$ if and only if $x_j\leq y_j$ for $1\leq j\leq n$, where $x_j$ and $y_j$ are the $j$th coordinates of $x$ and $y$, respectively. 
For $0\leq i\leq 2n$, let $\ell_i(n)$ denote the number of vectors in $\{0,1,2\}^n$ whose coordinates sum to $i$.
\end{defn}

The poset $\{0,1,2\}^n$ can also be viewed as the poset of divisors of the square of a square-free integer with $n$ prime factors under the `divides' relation or the poset of all submultisets of a multiset consisting of $n$ distinct elements each of multiplicity two under inclusion. It is easy to see that
\[\ell_i(n) = \sum_{s=\max\{0,i-n\}}^{\left\lfloor i/2\right\rfloor}\binom{n}{i-s}\binom{i-s}{s}.\] Also, trivially, $\ell_i(n)=\ell_{2n-i}(n)$ for $0\leq i\leq 2n$. A result of de Bruijn, van Ebbenhorst Tengbergen and Kruyswijk~\cite{multiSperner} says that $\{0,1,2\}^n$ has a `symmetric chain decomposition' (see \cite{AndersonBook} for details) which, in particular, implies that the sequence $\ell_0(n),\dots,\ell_{2n}(n)$ is unimodal. We prove the following theorem.

\begin{thm}
\label{multisetThm}
Let $k$ be a fixed positive integer. Then there exists a constant $n_0(k)$ such that if $n\geq n_0(k)$ and $S\subseteq \{0,1,2\}^n$ has cardinality at least 
\[\sum_{r =n-\left\lfloor \frac{k-1}{2}\right\rfloor}^{n+\left\lceil \frac{k-1}{2}\right\rceil}\ell_{r}(n)+ t,\] 
then
\[\comp(S)\geq \left(\frac{\ell_{3k-1}(n)}{\ell_{2k-1}(n)} - 1\right)t.\]
\end{thm}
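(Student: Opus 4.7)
The plan is to prove Theorem~\ref{multisetThm} via the random-chain framework the paper advertises for Theorem~\ref{vecSpThm}, adapted to $\{0,1,2\}^n$. Write $M$ for the union of the middle $k$ levels and $U := \{0,1,2\}^n\setminus M$. From $|S|\ge|M|+t$ we get $|S\cap U|\ge t+|M\setminus S|$, so at least $t$ elements of $S$ lie outside the middle block; these will drive the supersaturation bound.

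The key object is a random chain $C$ in $\{0,1,2\}^n$ with $\Pr(x\in C)=1/\ell_{|x|}(n)$ for every $x$. Such a chain exists because $\{0,1,2\}^n$ has the normalized matching property, which follows from the symmetric chain decomposition of de Bruijn, van Ebbenhorst Tengbergen and Kruyswijk cited just before the theorem: pick a uniform element at level $n$ and extend up and down, one level at a time, according to normalized matchings between consecutive levels. Since any two elements of $C$ are comparable, linearity of expectation gives
\[\mathbb{E}\!\left[\binom{|C\cap S|}{2}\right]=\sum_{\substack{\{x,y\}\subseteq S\\ x<y}}\Pr(x,y\in C)\le p_{\max}\cdot\comp(S),\]
where $p_{\max}:=\sup_{x<y}\Pr(x,y\in C)$ and the sum is implicitly over comparable pairs.

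To exploit this I would (i) lower bound $\mathbb{E}[\binom{|C\cap S|}{2}]$ and (ii) compute $p_{\max}$. For (i), $\mathbb{E}[|C\cap S|]=\sum_r|S_r|/\ell_r(n)$ is at most $k$ from the middle $k$ levels, plus the excess from $S\cap U$; the fact that the $k$ middle levels are ``nearly saturated'' together with at least $t$ non-middle contributions forces $\mathbb{E}[\binom{|C\cap S|}{2}]$ to exceed the trivial $\binom{k}{2}$ baseline by an amount linear in $t$, after subtracting an error controlled by $|M\setminus S|$. For (ii), $\Pr(x,y\in C)$ factorises through maximal-chain probabilities in the intervals $[(0,\ldots,0),x]$, $[x,y]$ and $[y,(2,\ldots,2)]$. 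A case analysis on the level of $x\in S\cap U$ should identify the extremal configuration as $|x|=2k-1$ (or its mirror $2n-2k+1$) with $y$ at relative level $k$, producing the ratio $\ell_{3k-1}(n)/\ell_{2k-1}(n)$; the $-1$ correction arises from removing a diagonal term when passing from $\binom{|C\cap S|}{2}$ back to a count of strictly comparable pairs.

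The main obstacle is that, unlike $\Vectors{q}{n}$ where $GL_n(\mathbb{F}_q)$ acts transitively on subspaces of a given dimension, $\{0,1,2\}^n$ admits no transitive group action on each level, so the number of maximal chains through an element at level $r$ depends on its coordinate pattern (it can be shown to depend only on the number of $1$'s in $x$). Producing a random chain with the desired uniform marginals therefore requires the normalized-matching construction rather than a symmetry argument, which makes the evaluation of $p_{\max}$ significantly more delicate. A secondary nuisance is the bookkeeping around $|M\setminus S|$: a large deficit in $M$ reduces the naive pair count from $S\cap U$, and the proof must absorb this loss; the hypothesis $n\ge n_0(k)$ is what makes the relevant error terms negligible.
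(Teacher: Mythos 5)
Your proposal is missing the key structural idea of the paper's proof: the decomposition of $\{0,1,2\}^n$ into $k$ sub-posets $P_0,\dots,P_{k-1}$ according to rank modulo $k$. Without this step, the quantity $p_{\max}=\sup_{x<y}\Pr(x,y\in C)$ (or, in the paper's formulation of Lemma~\ref{randomCount}, the maximum conditional probability $\Pr(y\in\mathcal{C}_\mu\mid x\in\mathcal{C}_\mu)$ over comparable pairs in $S$) is dominated by pairs $x<y$ in \emph{adjacent} levels, for which this probability is of order $1/n$. This gives at best $\comp(S)\gtrsim nt$, which is vacuous compared with the target $(\ell_{3k-1}(n)/\ell_{2k-1}(n)-1)t=\Theta(n^k)t$ as soon as $k\ge 2$. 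Your own ``case analysis'' sentence implicitly assumes that the relevant $y$ sits $k$ levels above $x$, but nothing in your setup forbids two comparable elements of $S$ from sitting in consecutive levels --- indeed the union $M$ of the middle $k$ levels is itself full of such pairs for $k\ge 2$. The modular split is exactly what guarantees every comparable pair in each $S_r$ has rank gap at least $k$; the paper then proves $\comp(S_r)\ge(\ell_{3k-1}/\ell_{2k-1}-1)t_r$ for each $r$ with $\sum t_r=t$ and sums. Relatedly, the explanation of the ``$-1$'' as a diagonal correction from $\binom{\cdot}{2}$ is not where it comes from in the paper; it arises from the normalized-matching/log-concavity estimate $\ell_{r+k}(n)/\ell_r(n)-1\ge\ell_{3k-1}(n)/\ell_{2k-1}(n)-1$ used when pushing low-rank elements of $S_r$ upward via Hall's theorem.

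There is a second, more technical gap you should also be aware of. The bound from Lemma~\ref{randomCount} has the form
\[
\comp(S)\ge\Bigl(\max_{(x,y)}\Pr(y\in\mathcal{C}_\mu\mid x\in\mathcal{C}_\mu)\Bigr)^{-1}\Bigl(|S|-\bigl(\min_x\Pr(x\in\mathcal{C}_\mu)\bigr)^{-1}\Bigr),
\]
and the subtracted term $\bigl(\min_x\Pr\bigr)^{-1}$ equals the cardinality of the largest level of the sub-poset you apply it to. If you work in all of $\{0,1,2\}^n$ this term is $\ell_n(n)$, not $|M|$, so you do recover a $t$-linear bound; but only the modular decomposition makes the coefficient the right power of $n$. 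Your chain construction and the observation that the number of maximal chains through $x$ depends only on the level and number of ones of $x$ are both fine, but you would in addition need a distribution with the memoryless property~\eqref{memoryless} (or an equivalent device) to control conditional probabilities across several levels, and you would need an argument --- the paper uses Hall's theorem and the normalised matching property --- to dispose of elements of $S_r$ of rank less than $2k$, where the estimate on $\Pr(y\in\mathcal{C}_\mu\mid x\in\mathcal{C}_\mu)$ genuinely fails.
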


Note that for $k=1$ we have $\frac{\ell_{3k-1}(n)}{\ell_{2k-1}(n)} - 1 = \frac{n-1}{2}$ and, for general $k$ and $n$ large with respect to $k$, we have 
\begin{equation}\label{ratioAsymp}\frac{\ell_{3k-1}(n)}{\ell_{2k-1}(n)} - 1 = \frac{(1+o(1)\binom{n}{3k-1}}{\binom{n}{2k-1}} = \frac{(1+o(1))(2k-1)!n^k}{(3k-1)!}.\end{equation}
In particular, for $k=1$ and $|S|$ not much larger than $\ell_n(n)$, Theorem~\ref{multisetThm} is nearly best possible. To see this, let $S = S_n\cup S_{n+1}$ where $S_n$ contains all elements of $\{0,1,2\}^n$ whose coordinates sum to $n$ and $S_{n+1}$ contains the $|S|-\ell_n(n)$ elements of $\{0,1,2\}^n$ whose coordinates sum to $n+1$ and, subject to this, have the minimum number of non-zero coordinates. 

In the past few years, there has been some considerable success in extending classical results in extremal combinatorics to sparse random settings due to the breakthroughs 
made independently by Conlon and Gowers~\cite{ConlonGowers} and Schacht~\cite{Schacht}. Shortly thereafter another, very different, approach known as the `container method' 
was developed independently by Balogh, Morris and Samotij~\cite{ContainersMorris} and Saxton and Thomason~\cite{ContainersThomason}. As was demonstrated 
in~\cite{ContainersThomason,ContainersMorris}, the container method can be used to prove many of the results of~\cite{ConlonGowers,Schacht}. Two advantages of the container 
method are that it is relatively straightforward to apply and that it often provides `counting' results which can be much stronger than the corresponding probabilistic 
statements. Supersaturation theorems form a key ingredient in most proofs involving the container method. For instance, many of the proofs 
in~\cite{ContainersThomason,ContainersMorris} applied the classical graph supersaturation theorem of Erd\H{o}s and Simonovits~\cite{Hyper}. For additional applications of the 
container method, see~\cite{BaloghWagnerBoolean,GraphCont,HongNew} and the references therein.

We apply Theorems~\ref{vecSpThm} and~\ref{multisetThm} and the container method to prove upper bounds on the number of antichains in $\Vectors{q}{n}$ and $\{0,1,2\}^n$ (Theorem~\ref{countVec} and~\ref{countMulti}) and upper bounds on the largest antichains in random subsets of these posets of a certain density which hold with high probability (Theorems~\ref{randVecSp} and~\ref{randMultiset}).

\begin{thm}
\label{countVec}
Let $q$ be a fixed prime power. The number of antichains in $\mathcal{V}(q,n)$ is $2^{\left(1+O\left(\sqrt{n}q^{-n/4}\right)\right)\qbinom{n}{\left\lfloor n/2\right\rfloor}{q}}$. 
\end{thm}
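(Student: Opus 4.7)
The lower bound is immediate: every subset of the middle layer $\{V\in\mathcal{V}(q,n):\dim V=\lfloor n/2\rfloor\}$ is an antichain, so the number of antichains is at least $2^{B}$ where $B:=\qbinom{n}{\lfloor n/2\rfloor}{q}$. The main content is the matching upper bound. The plan is to combine the case $k=1$ of Theorem~\ref{vecSpThm} with the container-type lemma for posets promised in the introduction (the generalization of Balogh--Mycroft--Treglown~\cite{RandomSperner}). Writing $\epsilon:=\sqrt{n}\,q^{-n/4}$, I would use the container lemma to produce a family $\mathcal{C}$ of subsets of $\mathcal{V}(q,n)$ (``containers'') such that (i) every antichain is contained in some $C\in\mathcal{C}$; (ii) $\comp(C)\le \Delta$ for each $C\in\mathcal{C}$, where the threshold $\Delta$ is chosen below; and (iii) $\log_2|\mathcal{C}|=O(\epsilon B)$. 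The number of antichains is then bounded above by $|\mathcal{C}|\cdot 2^{\max_{C\in\mathcal{C}}|C|}$.

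The role of Theorem~\ref{vecSpThm} (with $k=1$) is to convert the comparable-pair bound in (ii) into a size bound on each container. If $\comp(C)\le\Delta$, then Theorem~\ref{vecSpThm} yields
\[
|C|\ \le\ \qbinom{n}{\lceil n/2\rceil}{q} + \frac{\Delta}{\qbinom{\lceil (n+1)/2\rceil}{1}{q}}\ =\ B+O\!\left(\frac{\Delta}{q^{n/2}}\right).
\]
Choosing $\Delta$ of order $\epsilon B\,q^{n/2}$ then gives $|C|\le(1+O(\epsilon))B$, and combining with (iii) produces a count of at most $2^{(1+O(\epsilon))B}$, which is exactly the claimed upper bound. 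The container lemma is designed so that this threshold $\Delta$ is achievable at cost $\log_2|\mathcal{C}|\le \tau\log_2|\mathcal{V}(q,n)|$ for fingerprints of size $\tau$, and since $\log_2|\mathcal{V}(q,n)|=\Theta(n^2\log q)$, one may take $\tau$ of order $\epsilon B/(n^2\log q)$ in order for (iii) to hold.

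The main obstacle I anticipate is tuning $\tau$ and $\Delta$ tightly enough that the error $\sqrt{n}\,q^{-n/4}$ is genuinely attained rather than a weaker polynomial-in-$n$ loss. This relies on the sharp ratio $\comp(S)/(|S|-B)\sim q^{n/2}$ provided by Theorem~\ref{vecSpThm} (which reflects the minimum comparability degree of a subspace at the middle layer), together with a form of the poset container lemma in which one step of the iteration costs only $\tau\log_2|\mathcal{V}(q,n)|$ bits in the final fingerprint. Once the container lemma is stated in this sharp form, the remaining estimate is a routine calculation; the creative content is already packaged into the supersaturation theorem.
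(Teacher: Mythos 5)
Your proposal matches the paper's proof: the paper applies Lemma~\ref{containersCor} with $k=1$ to $\Vectors{q}{n}$, using Theorem~\ref{vecSpThm} (with $k=1$) as the supersaturation input, with the degree parameter $d:=\sqrt{n}\,q^{n/4}$ and the size parameter $m:=\qbinom{n}{\lfloor n/2\rfloor}{q}\big/\bigl(1-d\qbinom{\lceil(n+1)/2\rceil}{1}{q}^{-1}\bigr)$ — which is exactly your $\Delta\approx\epsilon B q^{n/2}$ translated into the $(d,m)$ parameterization via $\Delta\approx md$. One small imprecision in your sketch: you bound the number of fingerprints by $\tau\log_2|\Vectors{q}{n}|\approx\tau n^2\log q$ bits, but the fingerprint size $\tau$ in the Kleitman--Winston argument is forced to be about $N/(2d+1)=\Theta(\epsilon B/n)$, which with your crude estimate would give an exponent off by a factor of $n\log q$; the fix is the standard sharper bound $\log_2\binom{N}{\le\tau}\le\tau\log_2(eN/\tau)=O(\tau\,n\log q)$, under which the fingerprint contribution is indeed $O(\epsilon B)$ for fixed $q$, and this is precisely the estimate the paper carries out.
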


\begin{thm}
\label{countMulti}
The number of antichains in $\{0,1,2\}^n$ is $2^{\left(1+O\left(\sqrt{\log{n}/n}\right)\right)\ell_n(n)}$. 
\end{thm}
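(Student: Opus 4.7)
The plan is to establish the lower bound by exhibiting a single large antichain, and to prove the upper bound by combining Theorem~\ref{multisetThm} (with $k=1$) with the poset container lemma advertised in the introduction. This closely mirrors the strategy that the paper will use for Theorem~\ref{countVec}.

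For the lower bound, observe that the middle level $L_n := \{x \in \{0,1,2\}^n : x_1+x_2+\cdots+x_n = n\}$ is an antichain of cardinality $\ell_n(n)$, so each of its $2^{\ell_n(n)}$ subsets is also an antichain; this already gives at least $2^{\ell_n(n)}$ antichains in $\{0,1,2\}^n$.

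For the upper bound, I would set $\varepsilon := C\sqrt{(\log n)/n}$ for a sufficiently large absolute constant $C$, and apply the container lemma to the comparability graph on $\{0,1,2\}^n$ (whose edges are the comparable pairs). The supersaturation input is precisely Theorem~\ref{multisetThm} with $k=1$: any $S$ with $|S|\geq \ell_n(n)+t$ contains at least $\tfrac{n-1}{2}t$ comparable pairs. The container lemma should then yield a family $\mathcal{C}$ of subsets of $\{0,1,2\}^n$ such that every antichain is contained in some $C\in\mathcal{C}$, each $C$ satisfies $|C|\leq(1+\varepsilon)\ell_n(n)$, and $\log_2|\mathcal{C}|\leq \varepsilon\,\ell_n(n)$. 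Since every antichain is just a subset of some container, we then obtain
\[\#\{\text{antichains in }\{0,1,2\}^n\} \;\leq\; |\mathcal{C}|\cdot 2^{(1+\varepsilon)\ell_n(n)} \;\leq\; 2^{(1+2\varepsilon)\ell_n(n)} \;=\; 2^{(1+O(\sqrt{(\log n)/n}))\ell_n(n)},\]
matching the desired upper bound.

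The main technical obstacle is calibrating the parameters of the container lemma so that the size bound $|C|\leq(1+\varepsilon)\ell_n(n)$ and the counting bound $\log_2|\mathcal{C}|\leq \varepsilon\,\ell_n(n)$ hold simultaneously with $\varepsilon=\Theta(\sqrt{(\log n)/n})$. The relevant scales are $|\{0,1,2\}^n|=3^n$ and $\ell_n(n)=\Theta(3^n/\sqrt{n})$ (from a local central limit theorem for the coefficients of $(1+x+x^2)^n$), pitted against the supersaturation rate of $\tfrac{n-1}{2}$ comparable pairs per excess element supplied by Theorem~\ref{multisetThm}. Each refinement step in the container process costs $O(n)$ bits per fingerprint element while shrinking the candidate set at a rate driven by this supersaturation, and optimizing the trade-off leads naturally to $\varepsilon\asymp\sqrt{(\log n)/n}$. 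A subsidiary technical point is that Theorem~\ref{multisetThm} is only strong near the middle level, so one may first want to restrict to layers of weight in an interval of width $O(\sqrt{n\log n})$ about $n$ and absorb the negligible Gaussian tail into the $(1+\varepsilon)$-factor before running the container argument.
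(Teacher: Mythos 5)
Your lower bound (subsets of the middle level $L_n$) is correct and matches the paper. The upper bound as proposed, however, has a genuine quantitative gap, and the paper closes it with an idea you do not use.

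With only the $k=1$ instance of Theorem~\ref{multisetThm}, the supersaturation rate is $d\approx\tfrac{\varepsilon n}{2}$ comparable pairs per element once $|S|\geq(1+\varepsilon)\ell_n(n)$. A single application of the container lemma (Lemma~\ref{generalCont}) then produces fingerprints $T$ of size up to $|P|/(2d+1)\approx 3^n/(\varepsilon n)$, chosen from all of $P=\{0,1,2\}^n$. The counting cost is
\[
\log_2\binom{3^n}{\le 3^n/(\varepsilon n)}\;=\;\Theta\!\left(\frac{3^n}{\varepsilon n}\log(\varepsilon n)\right)\;=\;\Theta\!\left(\frac{\ell_n(n)\,\sqrt n\,\log n}{\varepsilon n}\right)\;=\;\Theta\!\left(\frac{\ell_n(n)\log n}{\varepsilon\sqrt n}\right),
\]
and demanding this be $\leq\varepsilon\,\ell_n(n)$ forces $\varepsilon\gtrsim n^{-1/4}\sqrt{\log n}$, not the stated $\varepsilon\asymp n^{-1/2}\sqrt{\log n}$. (Your heuristic of ``$O(n)$ bits per fingerprint element'' is also off: the correct per-element cost is roughly $\log_2(|P|/|T|)=\Theta(\log d)=\Theta(\log n)$, not $\Theta(n)$.) Your suggested workaround of first restricting to layers within $O(\sqrt{n\log n})$ of $n$ does not help, because those layers already contain $\Theta(3^n)$ elements; the Gaussian tail removal only discards a negligible fraction and leaves the ground set the same size up to constants. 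So the bottleneck — the $\sqrt n$ ratio between $|P|=3^n$ and $\ell_n(n)=\Theta(3^n/\sqrt n)$ — is untouched.

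The paper resolves this with the two-stage container lemma (Lemma~\ref{generalContK} with $k=2$), which is precisely what your proposal is missing. The first stage uses Theorem~\ref{multisetThm} with $k=2$, which gives $\Theta(n^2)$ comparable pairs per excess element once $|S|>2\ell_n(n)$; setting $d_1=n\sqrt{\log n}\ll n^2$ makes the stage-one fingerprints cost only $\Theta\!\left(\frac{3^n}{n\sqrt{\log n}}\log n\right)=\Theta\!\left(\ell_n(n)\sqrt{\log n/n}\right)$ bits while shrinking the candidate set to size $m_1\approx 2\ell_n(n)$. Stage two then runs your $k=1$ argument on this much smaller ground set with $d_2=\sqrt{n\log n}$, which again costs $\Theta\!\left(\ell_n(n)\sqrt{\log n/n}\right)$ bits and yields containers of size $(1+O(\sqrt{\log n/n}))\ell_n(n)$. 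Without the preliminary thinning step that exploits the stronger $k=2$ supersaturation, the single-stage argument cannot reach the exponent claimed in the theorem.
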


Of course, the number of antichains in any poset $P$ is at least $2$ to the power of the cardinality of its largest antichain, and so Theorems~\ref{countVec} and~\ref{countMulti} are best possible up to the $(1+o(1))$ factor in the exponent.

Given a set $X$, a \emph{$p$-random subset} of $X$ is a subset of $X$ obtained by including each element of $X$ with probability $p$ independently of one another.

\begin{thm}
\label{randVecSp}
Let $q$ be a fixed prime power and $\varepsilon>0$. There exists a positive constant $c(\varepsilon,q)$ such that if $p\geq c(\varepsilon,q)/q^{n/2}$, then with high probability every antichain in a $p$-random subset of $\Vectors{q}{n}$ has cardinality at most $(1+\varepsilon)p\qbinom{n}{\left\lfloor n/2\right\rfloor}{q}$. 
\end{thm}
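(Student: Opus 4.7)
The plan is a standard container-plus-Chernoff argument. Let $m_0 := \qbinom{n}{\lfloor n/2\rfloor}{q}$ denote the size of the largest antichain in $\Vectors{q}{n}$. I would use Theorem~\ref{vecSpThm} with $k=1$ to feed the container-type lemma for posets mentioned in the introduction, then finish with a Chernoff plus union bound.

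First, I would apply the poset container lemma to produce a family $\mathcal{F}$ of subsets of $\Vectors{q}{n}$ (``containers'') satisfying: (a) every antichain of $\Vectors{q}{n}$ lies in some $F\in \mathcal{F}$; (b) $|F|\le (1+\varepsilon/4)m_0$ for every $F\in\mathcal{F}$; and (c) $\log_2|\mathcal{F}|\le \delta(n,q)\,m_0$, where $\delta(n,q)=O(q^{-n/2})$ up to logarithmic factors. This is exactly the regime in which Theorem~\ref{vecSpThm} is effective: once $|S|$ exceeds $m_0$ by a surplus of $t$, one gets at least $t\cdot \qbinom{\lceil (n+1)/2\rceil}{1}{q}=\Theta(t\,q^{n/2})$ comparable pairs, and this supersaturation rate controls the trade-off between the container size and the logarithm of the number of containers in the iterative container algorithm.

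Second, I would apply Chernoff to each container. Let $R$ be the $p$-random subset of $\Vectors{q}{n}$. For each $F\in\mathcal{F}$, $|F\cap R|$ is a sum of independent $\{0,1\}$ variables with mean at most $(1+\varepsilon/4)p\,m_0$, so there exists $\eta=\eta(\varepsilon)>0$ with
\[
\Pr\bigl[|F\cap R|\ge (1+\varepsilon)p\,m_0\bigr]\le \exp\bigl(-\eta p\,m_0\bigr).
\]
A union bound over $\mathcal{F}$ yields a failure probability at most $|\mathcal{F}|\exp(-\eta p\,m_0)\le \exp\!\bigl((\delta(n,q)\ln 2-\eta p)\,m_0\bigr)$. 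Taking $c(\varepsilon,q)$ large enough in terms of $\eta$ and the hidden constant in $\delta(n,q)$, the hypothesis $p\ge c(\varepsilon,q)q^{-n/2}$ forces $\eta p\ge 2\delta(n,q)\ln 2$, so the failure probability is $o(1)$. Since every antichain in $R$ is contained in $F\cap R$ for some $F\in\mathcal{F}$, this shows that with high probability every antichain in $R$ has cardinality at most $(1+\varepsilon)p\,m_0$.

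The main obstacle is step one: tuning the container lemma so that container sizes are within a $(1+o(1))$ multiplicative factor of $m_0$ \emph{and simultaneously} $\log|\mathcal{F}|=O(q^{-n/2}m_0)$. This is exactly where the strength of Theorem~\ref{vecSpThm}, namely a supersaturation rate of order $q^{n/2}$ per excess element, is essential; it is also what pins down the threshold $p\gtrsim q^{-n/2}$ in the statement, since a weaker supersaturation bound would yield a larger container family and hence a worse $p$-range.
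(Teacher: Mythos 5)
Your overall plan — a single-stage poset container lemma fed by Theorem~\ref{vecSpThm} with $k=1$, followed by Chernoff and a union bound — is exactly the paper's strategy, and the parameter choices (container size $(1+O(\varepsilon))\qbinom{n}{\lfloor n/2\rfloor}{q}$, supersaturation rate $d=\Theta(\varepsilon q^{n/2})$) are correct. The Chernoff step is also fine. However, there is a genuine gap in the union bound, and it is precisely at the point where you wave your hands about ``logarithmic factors.''

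Concretely, the containers produced by Lemma~\ref{generalCont} are $T\cup f(T)$ for fingerprints $T\in\binom{P}{\le N/(2d+1)}$ with $N=|\Vectors{q}{n}|$. Since $N=\Theta(m_0)$ and $d=\Theta(q^{n/2})$, we have $N/(2d+1)=\Theta(q^{-n/2}m_0)$, and the number of fingerprints is $\binom{N}{\le N/(2d+1)}$, whose logarithm is $\Theta\bigl(\tfrac{N}{2d+1}\log(2d+1)\bigr)=\Theta\bigl(n\,q^{-n/2}m_0\bigr)$ because $\log d=\Theta(n\log q)$. Meanwhile $p\,m_0=\Theta(c\,q^{-n/2}m_0)$ under the hypothesis $p\ge c(\varepsilon,q)q^{-n/2}$. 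So your requirement $\eta p\ge 2\delta(n,q)\ln 2$ reduces to $c\gtrsim n$, which is not a constant; the naive bound $|\mathcal{F}|\exp(-\eta p m_0)$ is off by a factor of $n$ in the exponent and does not go to zero for fixed $c$.

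The missing idea is to exploit that each antichain $I$ landing in container $T\cup f(T)$ must in particular satisfy $T\subseteq I\subseteq A_p$, where $T$ and $f(T)$ are disjoint. This lets you bound the contribution of the fingerprint $T$ by $\mathbb{P}(T\subseteq A_p)\cdot\mathbb{P}\bigl(|A_p\cap f(T)|\ge(1+3\varepsilon/4)p m_0\bigr)=p^{|T|}\cdot e^{-\Theta(\varepsilon^2)p m_0}$, after absorbing $|T|\le(\varepsilon/4)p m_0$ (which is where $c(\varepsilon,q)$ enters). Summing over $T$ replaces $|\mathcal{F}|$ by $\sum_{a\le N/(2d+1)}\binom{N}{a}p^a$, which is at most roughly $(e(2d+1)p)^{N/(2d+1)}$; since $(2d+1)p=\Theta(c)$ is constant, its logarithm is $\Theta\bigl(\log(O(c))\cdot q^{-n/2}m_0\bigr)$ with no stray factor of $n$. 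Comparing to $\eta p m_0=\Theta(\varepsilon^2 c\,q^{-n/2}m_0)$, it now suffices that $\log(O(c))/c$ be small, which holds for $c$ large — a genuine constant depending only on $\varepsilon$ and $q$. This $p^{|T|}$ bookkeeping is the step your write-up omits, and without it the stated threshold cannot be reached.
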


\begin{thm}
\label{randMultiset}
Let $\varepsilon>0$ be fixed. There exists a positive constant $c(\varepsilon)$ such that if $p\geq c(\varepsilon)/n$, then with high probability every antichain in a $p$-random subset of $\{0,1,2\}^n$ has cardinality at most $(1+\varepsilon)p\ell_n(n)$. 
\end{thm}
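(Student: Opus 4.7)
The plan is to derive Theorem~\ref{randMultiset} from Theorem~\ref{multisetThm} via the container method, mirroring the treatment of the boolean lattice by Balogh, Mycroft and Treglown~\cite{RandomSperner}. I would first establish a container-type lemma for $\{0,1,2\}^n$ and then close the argument with Chernoff's inequality and a union bound.

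Applying Theorem~\ref{multisetThm} with $k=1$ yields the supersaturation bound $\comp(S)\geq\frac{n-1}{2}(|S|-\ell_n(n))$ for every $S\subseteq\{0,1,2\}^n$ with $|S|\geq\ell_n(n)$. I would use this to construct a family $\mathcal{C}$ of subsets of $\{0,1,2\}^n$ enjoying three properties: (i) every antichain $A\subseteq\{0,1,2\}^n$ satisfies $A\subseteq C$ for some $C\in\mathcal{C}$; (ii) $|C|\leq(1+\varepsilon/2)\ell_n(n)$ for every $C\in\mathcal{C}$; and (iii) $\log_2|\mathcal{C}|\leq\kappa(\varepsilon)\,\ell_n(n)/n$ for some constant $\kappa(\varepsilon)>0$. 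The construction follows the Kleitman--Winston fingerprinting paradigm: starting from $A$, iteratively pick an element $v$ of the current candidate set that is comparable to many other candidates (the linear-in-$n$ supersaturation estimate guarantees that such a $v$ exists whenever the candidate set has size exceeding $(1+\varepsilon/2)\ell_n(n)$), add $v$ to a growing fingerprint $T$, and delete from the candidate set both $v$ and every element of $\{0,1,2\}^n$ comparable to $v$. The final container $C(T)$ is then a function of $T$ alone.

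Granted the container lemma, the remainder is routine. Let $R$ be a $p$-random subset of $\{0,1,2\}^n$ with $p\geq c(\varepsilon)/n$. For each $C\in\mathcal{C}$ the random variable $|C\cap R|$ is binomial with mean at most $(1+\varepsilon/2)p\,\ell_n(n)$, so by Chernoff's inequality,
\[\Pr\!\left[|C\cap R|\geq(1+\varepsilon)p\,\ell_n(n)\right]\leq\exp\!\left(-c_0\varepsilon^2 p\,\ell_n(n)\right)\]
for an absolute constant $c_0>0$. A union bound over $\mathcal{C}$ yields total failure probability at most
\[\exp\!\left(\kappa(\varepsilon)(\ln 2)\,\frac{\ell_n(n)}{n}-c_0\varepsilon^2 p\,\ell_n(n)\right),\]
which is $o(1)$ provided $c(\varepsilon)>2\kappa(\varepsilon)(\ln 2)/(c_0\varepsilon^2)$. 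Since every antichain in $R$ is contained in $C\cap R$ for some $C\in\mathcal{C}$, the conclusion follows.

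The main obstacle is the container lemma with the correct dependence on $n$. A naive bookkeeping of the fingerprinting process produces only $\log_2|\mathcal{C}|\leq\kappa(\varepsilon)\,\ell_n(n)(\log n)/n$, which would weaken the threshold to $p\geq c(\varepsilon)(\log n)/n$. Removing the spurious logarithm, thereby matching the threshold in~\cite{RandomSperner}, requires a more delicate analysis: for instance, by exploiting the symmetric chain decomposition of $\{0,1,2\}^n$ from~\cite{multiSperner} to restrict fingerprint choices to a smaller ground set at each step, or by iterating a finer Saxton--Thomason-type container argument. Once this technical lemma is in place, the rest of the proof is the routine Chernoff computation above.
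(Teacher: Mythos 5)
Two ideas essential to the paper's proof are missing from your sketch, and without them the threshold $p\geq c(\varepsilon)/n$ cannot be reached.

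First, a direct union bound over a container family $\mathcal{C}$ cannot work: as you yourself observe, the Kleitman--Winston construction intrinsically gives $\log_2|\mathcal{C}|=\Theta\bigl(\ell_n(n)(\log n)/n\bigr)$, and no finer bookkeeping of the fingerprinting removes that $\log n$. The paper instead runs a \emph{weighted} union bound over fingerprints $T$: if $A_p$ contains a large antichain $I$ with fingerprint $T$, then $T\subseteq I\subseteq A_p$, and one bounds $\sum_T\mathbb{P}(T\subseteq A_p)\cdot\mathbb{P}\bigl(|A_p\cap f(T)|\geq\cdots\bigr)$. The factor $\mathbb{P}(T\subseteq A_p)=p^{|T|}$ is what kills the logarithm: for $|T|\leq m/(2d+1)$ one has $\binom{m}{|T|}p^{|T|}\leq\bigl(e(2d+1)p\bigr)^{|T|}$, and since $(2d+1)p=\Theta(\varepsilon c)$ is a constant independent of $n$, this contributes only $O(\ell_n(n)\log c/n)$, not $O(\ell_n(n)(\log n)/n)$, to the exponent. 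For this weighting to be available the fingerprint must be a subset of the antichain itself, so the algorithm must only add the maximum-degree element to $T$ when it lies in $I$ (deleting just that element otherwise), as in Lemma~\ref{generalContK}; your description picks a high-degree $v$ without this case distinction, so the resulting $T$ need not lie in $A_p$.

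Second, even with the weighted union bound a single-stage container argument fails. The ground set has $3^n=\Theta(\ell_n(n)\sqrt{n})$ elements, so the fingerprint can have size $\Theta(3^n/n)=\Theta(\ell_n(n)/\sqrt{n})$, and then $\binom{3^n}{\Theta(3^n/n)}p^{\Theta(3^n/n)}$ contributes $\Theta(\ell_n(n)/\sqrt{n})$ to the exponent, swamping the Chernoff gain of $\Theta(p\ell_n(n))=\Theta(\ell_n(n)/n)$. The paper's fix is the two-stage Lemma~\ref{generalContK}: the first stage invokes Theorem~\ref{multisetThm} with $k=2$, yielding $\comp(S)\geq\Theta(n^2)\,|S|$ whenever $|S|\geq 3\ell_n(n)$, which thins the ground set from $3^n$ down to $3\ell_n(n)$ at a negligible fingerprint cost $O(3^n/n^2)$; only then does the second stage apply your $k=1$ supersaturation with degree $\Theta(\varepsilon n)$ on the already-thinned ground set, with a second fingerprint of size $O(\ell_n(n)/n)$ chosen from a set of size $O(\ell_n(n))$. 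Your proposed alternatives (symmetric chain decompositions, iterated Saxton--Thomason containers) gesture towards shrinking the ground set but are not worked out; the precise mechanism that closes the gap is the $k=2$ supersaturation bound combined with Lemma~\ref{generalContK} and the weighted union bound.
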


It is not hard to see that Theorems~\ref{randVecSp} and~\ref{randMultiset} are best possible up to the choice of the constants $c(\varepsilon,q)$ and $c(\varepsilon)$; this is made explicit in Section~\ref{countAnti}.

In the next section, we prove a general lemma for obtaining lower bounds on the number of comparable pairs in a subset $S$ of a poset $P$. We provide a few basic applications of this lemma in Section~\ref{exampleSec} including a proof of an approximate version of the theorem of Kleitman~\cite{superKleitman} and a proof of Theorem~\ref{vecSpThm}. In Section~\ref{multisetSec}, we apply this lemma again to prove Theorem~\ref{multisetThm}. In Section~\ref{containerSec} we prove a container-type lemma for posets which generalises a result of~\cite{RandomSperner} proved for the boolean lattice. In Section~\ref{countAnti}, we use this lemma to prove Theorems~\ref{countVec},~\ref{countMulti},~\ref{randVecSp} and~\ref{randMultiset} and a known upper bound on the number of antichains in the boolean lattice included for the sake of demonstration. Throughout the paper, we will require several known bounds on $\qbinom{n}{i}{q}$ and $\ell_i(n)$ which are stated in the appendix.

\section{Comparison Counting Via Random Chains}
\label{randomChainSec}

Our general approach to proving supersaturation results, which was inspired by  \cite{DGS}, is to count comparable pairs in a subset $S$ of $P$ by considering the expected number of such pairs in a chain of $P$ chosen randomly according to some distribution. 
One should point out that the general idea of counting comparable pairs relative to a random chain goes back, in some sense, to Lubell's proof~\cite{LYML} of the LYM inequality. It was recently used
by Das, Gan and Sudakov~\cite{DGS} and Dove, Griggs, Kang and Sereni~\cite{DGKS} to count chains of length $k$ in subsets of $\mathcal{P}(n)$ of prescribed cardinality.
In order for this technique to be effective, one requires a distribution on the  chains in $P$ which is sufficiently `balanced.' Roughly speaking, what we mean by this is that a random  chain chosen according to this distribution is not too unlikely to contain any fixed element of $P$ and, at the same time, not too likely to contain any fixed comparable pair of $P$. Our estimates will use a lemma on random chains, for which we require the notion of a comparability digraph.

\begin{defn}
Let $P$ be a poset. A \emph{comparability digraph} for $P$ is a digraph $D$ with $V(D)=P$ such that
\begin{itemize}
\item for every directed arc $(x,y)$ of $D$, we have that $x$ is comparable to $y$ in $P$, and
\item for every comparable pair $x,y\in P$, exactly one of the arcs $(x,y)$ or $(y,x)$ is in $E(D)$. 
\end{itemize}
\end{defn}

We remark that a poset $P$ can have many comparability digraphs. In particular, the fact that a comparability digraph contains an arc from $x$ to $y$ does not determine which of the relations $x<y$ or $y<x$ holds. 

\begin{lem}
\label{randomCount}
Let $P$ be a poset, let $D$ be a comparability digraph for $P$, let $\mu$ be a distribution on the chains of $P$ and let $\mathcal{C}_\mu$ be a random chain chosen according to $\mu$. Suppose further that $\mathbb{P}\left(x\in \mathcal{C}_\mu\right)>0$ for every $x\in P$. Then, for every set $S\subseteq P$, we have
\[\comp(S)\geq \left({\max_{(x,y)\in E(D)}\mathbb{P}\left(y\in \mathcal{C}_\mu\mid x\in\mathcal{C}_\mu\right)}\right)^{-1}\left(|S| - \left({\min_{x\in P}\mathbb{P}\left(x\in\mathcal{C}_\mu\right)}\right)^{-1}\right).\]
\end{lem}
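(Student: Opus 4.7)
The plan is to introduce two weighted counts on the random chain $\mathcal{C}_\mu$ that re-express the lemma as a comparison of expected values. Writing $p_x := \mathbb{P}(x \in \mathcal{C}_\mu)$ (positive by hypothesis) and letting $\alpha$ and $\beta$ denote the two optima in the statement, I set
\[
W(C) := \sum_{x \in S \cap C} \frac{1}{p_x}, \qquad Q(C) := \sum_{(x,y) \in E(D),\, x, y \in S \cap C} \frac{1}{p_x}
\]
for each chain $C$ of $P$. A direct linearity computation gives $\mathbb{E}[W(\mathcal{C}_\mu)] = |S|$, while
\[
\mathbb{E}[Q(\mathcal{C}_\mu)] = \sum_{(x,y) \in E(D),\, x,y \in S} \mathbb{P}(y \in \mathcal{C}_\mu \mid x \in \mathcal{C}_\mu) \leq \beta\, \comp(S).
\]

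The heart of the proof will be the pointwise inequality $Q(C) \geq W(C) - 1/\alpha$, which I aim to establish for every chain $C$ of $P$. Because $C$ is a chain in the poset, the elements of $S \cap C$ are pairwise comparable, so the restriction of $D$ to $S \cap C$ is a tournament. In any tournament on a non-empty vertex set, at most one vertex has out-degree zero, since two such vertices would need to have an arc between them. Hence every element $x$ of $S \cap C$ other than this possible sink $x^*$ will have at least one out-neighbour inside $S \cap C$ and so will contribute at least $1/p_x$ to $Q(C)$; summing yields $Q(C) \geq W(C) - 1/p_{x^*} \geq W(C) - 1/\alpha$, with the stronger inequality $Q(C) \geq W(C)$ when no sink exists.

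Taking expectations of the pointwise bound and chaining with the earlier estimates will then produce $|S| - 1/\alpha \leq \mathbb{E}[Q(\mathcal{C}_\mu)] \leq \beta\, \comp(S)$, which rearranges to the claimed inequality. The main obstacle lies in identifying the correct weighted counts: the more obvious approach through $\binom{|S \cap \mathcal{C}_\mu|}{2}$, paired with the convex estimate $\binom{Y}{2} \geq Y - 1$, produces only $\comp(S) \geq (\alpha|S| - 1)/\beta$, which is weaker than the claimed bound by a factor of $\alpha$. Weighting each arc of $D$ by the reciprocal of its tail probability, and then using the tournament-sink observation to discard exactly one term per chain, is the ingredient that converts the additive slack of $1$ into the $1/\alpha$ demanded by the statement.
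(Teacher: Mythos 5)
Your argument is correct and takes a genuinely different route from the paper's. The paper proves the bound by induction on $|S|$: in the inductive step it first reduces to the case in which every $w\in S$ has out-degree $d_S^+(w) < \beta^{-1}$ in $D$ (where $\beta$ is the maximum conditional probability), then applies the estimate $k-\binom{k}{2}\leq 1$ to $|S\cap\mathcal{C}_\mu|$ in expectation, factors $\mathbb{P}(w,z\in\mathcal{C}_\mu) = \mathbb{P}(w\in\mathcal{C}_\mu)\,\mathbb{P}(z\in\mathcal{C}_\mu\mid w\in\mathcal{C}_\mu)$, regroups by the first coordinate, and then uses the positivity of each bracketed factor --- which is exactly what the case reduction guarantees --- to replace $\mathbb{P}(w\in\mathcal{C}_\mu)$ by its minimum $\alpha$. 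Your approach avoids both the induction and the case split: weighting each element and each arc by $1/p_x$ makes the key inequality $Q(C)\geq W(C) - 1/\alpha$ hold pointwise on every chain $C$, via the observation that the tournament induced by $D$ on $S\cap C$ has at most one vertex of out-degree zero (and that vertex contributes at most $1/\alpha$ to $W$), so a single application of linearity of expectation finishes the proof. Your closing remark --- that the unweighted version of the expectation argument, carried out without the paper's case reduction, only yields $\comp(S)\geq(\alpha|S|-1)/\beta$ --- precisely identifies the role the induction plays in the paper and why the reciprocal weighting is the natural way to recover the lost factor of $\alpha$. Both proofs are of comparable length; yours is arguably cleaner in dispensing with the auxiliary reduction and the inductive framing.
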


\begin{proof}
We proceed by induction on $|S|$ where the base case $|S|\leq \left({\min_{x\in P}\mathbb{P}\left(x\in\mathcal{C}_\mu\right)}\right)^{-1}$ is trivial. Given $w\in S$, we define $d_S^+(w)$ to be the number of arcs $(w,z)\in E(D)$ with $z\in S$. Suppose that there exists $w\in S$ with $d^+_S(w)\geq \left({\max_{(x,y)\in E(D)}\mathbb{P}\left(y\in \mathcal{C}_\mu\mid x\in\mathcal{C}_\mu\right)}\right)^{-1}$. In this case, we apply the inductive hypothesis to $S\setminus\{w\}$ to obtain at least 
\[\left({\max_{(x,y)\in E(D)}\mathbb{P}\left(y\in \mathcal{C}_\mu\mid x\in\mathcal{C}_\mu\right)}\right)^{-1}\left(|S\setminus \{w\}|- \left({\min_{x\in P}\mathbb{P}\left(x\in\mathcal{C}_\mu\right)}\right)^{-1}\right).\]
comparable pairs in $S$ which do not involve $w$. We then obtain the desired bound on $\comp(S)$ by considering these comparable pairs along with those of the form $(w,z)\in E(D)$ with $z\in S$. Thus, we may assume that every $w\in S$ satisfies 
\begin{equation}\label{outdeg}d_S^+(w)< \left({\max_{(x,y)\in E(D)}\mathbb{P}\left(y\in \mathcal{C}_\mu\mid x\in\mathcal{C}_\mu\right)}\right)^{-1}.\end{equation}
The chain $\mathcal{C}_\mu$ contains precisely $|\mathcal{C}_\mu\cap S|$ elements of $S$ and  $\binom{|\mathcal{C}_\mu\cap S|}{2}$ of the comparable pairs of $S$ (since every pair in $\mathcal{C}_\mu$ is comparable). Since $k-\binom{k}{2}\leq 1$ for all $k$, we get that
\[1\geq \mathbb{E}\left(|\mathcal{C}_\mu\cap S| - \binom{|\mathcal{C}_\mu\cap S|}{2}\right) =\sum_{w\in S}\mathbb{P}\left(w\in\mathcal{C}_\mu\right) - \sum_{\substack{w,z\in S \\ (w,z)\in E(D)}}\mathbb{P}\left(w,z\in \mathcal{C}_\mu\right)\]
\[=\sum_{w\in S}\mathbb{P}\left(w\in \mathcal{C}_\mu\right)\left(1-\sum_{\substack{z\in S\\ (w,z)\in E(D)}}\mathbb{P}\left(z\in \mathcal{C}_\mu\mid w\in \mathcal{C}_\mu\right)\right)\]
\[\geq\sum_{w\in S}\mathbb{P}\left(w\in \mathcal{C}_\mu\right)\left(1-d^+_S(w){\max_{(x,y)\in E(D)}\mathbb{P}\left(y\in \mathcal{C}_\mu\mid x\in\mathcal{C}_\mu\right)}\right).\]
By \eqref{outdeg}, all of the summands are positive, and so this expression is bounded below by
\[\sum_{w\in S}\left({\min_{x\in P}\mathbb{P}\left(x\in\mathcal{C}_\mu\right)}\right)\left(1-d^+_S(w){\max_{(x,y)\in E(D)}\mathbb{P}\left(y\in \mathcal{C}_\mu\mid x\in\mathcal{C}_\mu\right)}\right).\]
Rearranging, we obtain
\[\comp(S) = \sum_{w\in S}d_S^+(w) \geq \left({\max_{(x,y)\in E(D)}\mathbb{P}\left(y\in \mathcal{C}_\mu\mid x\in\mathcal{C}_\mu\right)}\right)^{-1}\left(|S| - \left({\min_{x\in P}\mathbb{P}\left(x\in\mathcal{C}_\mu\right)}\right)^{-1}\right)\]
as desired.
\end{proof}

\begin{rem}
We remark that the proof of this lemma  generalises to give a lower bound on the number of edges induced by a set of $m$ vertices in a general graph $G$ in terms of $m$. That is, one could orient the edges of the graph and select a random clique according to a probability distribution $\mu$.
\end{rem}

\section{Supersaturation in \texorpdfstring{$\boldsymbol{\mathcal{P}(n)}$}{P(n)} and \texorpdfstring{$\boldsymbol{\Vectors{q}{n}}$}{V(q,n)}}
\label{exampleSec}

To illustrate the applicability of Lemma~\ref{randomCount}, we prove a weak version of the theorem of  Kleitman~\cite{superKleitman} in the boolean lattice which is often good enough for applications involving the container method (see, e.g.,~\cite{RandomSperner,RandomErdos,BaloghWagnerBoolean} and Theorem~\ref{countBoolean}).

\begin{thm}[Kleitman~\cite{superKleitman}]
\label{booleanThm}
Let $k$ be a fixed positive integer. Then there exists a constant $n_0(k)$ such that if $n\geq n_0(k)$ and $S\subseteq \mathcal{P}(n)$ is a set of cardinality at least 
\[\sum_{r=0}^{k-1}\binom{n}{\left\lceil \frac{n-k+1 +2r}{2}\right\rceil} + t,\]
then
\[\comp(S)\geq t\binom{\left\lceil (n+k)/2\right\rceil}{k}.\]
\end{thm}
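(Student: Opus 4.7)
The plan is to apply Lemma~\ref{randomCount} with a carefully chosen ``residue-restricted'' random chain. Let $M$ denote the right-hand side sum $\sum_{r=0}^{k-1}\binom{n}{\lceil(n-k+1+2r)/2\rceil}$; as $r$ ranges over $\{0,\ldots,k-1\}$, the values $\lceil(n-k+1+2r)/2\rceil$ are precisely the $k$ consecutive integers in the middle $k$ layers, which hit each residue modulo $k$ exactly once. For each $r$, let $m_r$ be the unique size in the middle $k$ layers with $m_r \equiv r \pmod{k}$; for $n$ large compared to $k$, unimodality of $\binom{n}{\cdot}$ gives that $\binom{n}{m_r}$ maximizes $\binom{n}{j}$ over all $j \equiv r \pmod{k}$ in $\{0,\ldots,n\}$, and by construction $\sum_{r=0}^{k-1}\binom{n}{m_r} = M$.

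Define $\mathcal{C}_\mu$ by first sampling $r \in \{0,\ldots,k-1\}$ with probability $\binom{n}{m_r}/M$, then independently sampling a uniformly random permutation $\sigma$ of $[n]$ and outputting $\mathcal{C}_\mu := \{A_r, A_{r+k}, A_{r+2k}, \ldots\}$, where $A_i := \{\sigma(1),\ldots,\sigma(i)\}$; thus $\mathcal{C}_\mu$ is the ``residue-$r$ sub-chain'' of a uniformly random maximal chain. A direct calculation shows that $\mathbb{P}(x \in \mathcal{C}_\mu) = \binom{n}{m_r}/(M \binom{n}{|x|})$ whenever $|x| \equiv r \pmod{k}$; by maximality of $\binom{n}{m_r}$ in its residue class this is at least $1/M$, with equality at $|x| = m_r$, so $\min_x \mathbb{P}(x \in \mathcal{C}_\mu) = 1/M$.

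For the other ingredient of the lemma, observe that any comparable pair $A \subsetneq B$ that can jointly lie in $\mathcal{C}_\mu$ must satisfy $|B|-|A| \in \{k, 2k, \ldots\}$, and the usual maximal-chain computation yields $\mathbb{P}(B \mid A) = 1/\binom{n-|A|}{|B|-|A|}$ and $\mathbb{P}(A \mid B) = 1/\binom{|B|}{|B|-|A|}$. Orienting each pair to minimize the resulting conditional probability leaves $1/\max\bigl(\binom{n-|A|}{|B|-|A|},\binom{|B|}{|B|-|A|}\bigr)$, and this quantity is maximized over admissible pairs when $|B|-|A|=k$ and $|A|+|B|$ is as close to $n$ as possible; a short parity analysis (over the four cases for the parities of $n$ and $k$) shows this always gives the value $\binom{\lceil (n+k)/2\rceil}{k}$. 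Pairs with gap $jk$ for $j\geq 2$ contribute denominators of order $\binom{\lceil(n+jk)/2\rceil}{jk} \gtrsim (n/2)^{jk}/(jk)!$, which for $n$ sufficiently large are much bigger than $\binom{\lceil (n+k)/2\rceil}{k}$ and therefore do not affect the maximum. Substituting into Lemma~\ref{randomCount} gives $\comp(S) \geq \binom{\lceil (n+k)/2\rceil}{k}(|S|-M) \geq t\binom{\lceil (n+k)/2\rceil}{k}$, as required. The main obstacle is identifying the right distribution on residues so that $\min_x \mathbb{P}(x \in \mathcal{C}_\mu)$ equals exactly $1/M$; once this is done, the remaining computations, including the parity bookkeeping in the last binomial coefficient, are routine.
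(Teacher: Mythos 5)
Your mixture distribution $\mu$ (sampling a residue class $r$ with probability $\binom{n}{m_r}/M$ and then intersecting a uniformly random maximal chain with that residue class) is a nice way to get $\min_x\mathbb{P}(x\in\mathcal{C}_\mu)=1/M$ in a single shot, rather than decomposing $\mathcal{P}(n)$ into $k$ sub-posets and summing; that computation is correct. However, the proof has a genuine gap at the other ingredient of Lemma~\ref{randomCount}. Your claim that
$\min\bigl(\mathbb{P}(B\mid A),\,\mathbb{P}(A\mid B)\bigr)=1\big/\max\bigl(\tbinom{n-|A|}{|B|-|A|},\tbinom{|B|}{|B|-|A|}\bigr)$
is maximised when $|B|-|A|=k$ and $|A|+|B|\approx n$ is \emph{false}: the maximum is actually attained at pairs with $|A|$ near $0$ and $|B|$ near $n$. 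Concretely, take $A=\emptyset$ and $B$ of size $n-t$ with $0\le t<k$ and $t\equiv n\pmod k$ (so $|B|-|A|$ is a multiple of $k$ and the pair can jointly lie in $\mathcal{C}_\mu$). Then $\mathbb{P}(A\mid B)=1/\binom{n-t}{0}=1$ and $\mathbb{P}(B\mid A)=1/\binom{n}{t}$, so the better orientation still leaves conditional probability $1/\binom{n}{t}\approx t!/n^t$, which for $t<k$ vastly exceeds $1/\binom{\lceil(n+k)/2\rceil}{k}\approx k!\,2^k/n^k$ for large $n$ (and literally equals $1$ when $k\mid n$). More generally, any arc joining a set of size below $k$ to one of size above $n-k$ in the same residue class has conditional probability far too large, and a single such arc wrecks the bound that Lemma~\ref{randomCount} gives, since the lemma's constant is the maximum over all arcs of $D$.

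The paper handles exactly this by a preprocessing step you have omitted. It first bounds the degree of every element of $S_r$ (via an outer induction that removes high-degree elements), then uses Hall's theorem to shift every element of $S_r$ of minimal cardinality up by $k$ levels without increasing $\comp(S_r)$, and only then applies Lemma~\ref{randomCount} to the truncated poset $P_r^*$ in which the troublesome small levels are absent. After that truncation, every arc from an extremal level points to a set of size at least $k$ above the minimum, and $\binom{n-t}{s+k}\ge\binom{n-k+1}{k}\gg\binom{\lceil(n+k)/2\rceil}{k}$ rescues the conditional-probability bound. To repair your single-distribution argument you would need an analogous shift/truncation together with a re-examination of both quantities on the truncated poset; this is a substantive missing ingredient, not routine bookkeeping.
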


\begin{proof}
For $0\leq r\leq k-1$, let $P_r$ be the poset consisting all subsets of $[n]$ of cardinality equivalent to $\left\lceil \frac{n-k+1+2r}{2}\right\rceil$ modulo $k$ ordered by inclusion (the idea of splitting the levels of $\mathcal{P}(n)$ into equivalence classes modulo $k$ is also used by Collares Neto and Morris~\cite{RandomErdos}). Note that $P_0\cup \dots \cup P_{k-1}$ partitions $\mathcal{P}(n)$. Define $S_r:=S\cap P_{r}$ and
\[t_r:=|S_r| - \binom{n}{\left\lceil \frac{n-k+1+2r}{2}\right\rceil}.\] 
Clearly, $\sum_{r=0}^{k-1}t_r = t$. Our goal is to show that, for $0\leq r\leq k-1$, the following holds:
\begin{equation}\label{theEllCase}\comp(S_r)\geq t_r\binom{\left\lceil (n+k)/2\right\rceil}{k}.\end{equation} 
This will imply the result since  $\comp(S)\geq \sum_{r=0}^{k-1}\comp(S_r)$.

Let $r\in \{0,\dots,k-1\}$ be fixed. We prove \eqref{theEllCase} by induction on $t_r$. Note that \eqref{theEllCase} holds trivially if $t_r\leq 0$. Now, for $t_r\geq1$, if $S_r$ contains an element $x$ which is comparable to at least $\binom{\left\lceil (n+k)/2\right\rceil}{k}$ elements of $S_r$, then we apply the induction hypothesis to find at least $(t_r-1)\binom{\left\lceil (n+k)/2\right\rceil}{k}$ comparable pairs in $S_r\setminus\{x\}$ and we are done.  Therefore, we can assume that there is no such $x\in S_r$.

Next, we show that we can assume that $S$ does not contain any extremely small sets. Let $s\in\{0,\dots,k-1\}$ so that $\left\lceil \frac{n-k+1+2r}{2}\right\rceil\equiv s\bmod k$. Let $X$ be the set of all elements of $S_r$ of cardinality $s$. Suppose that $X\neq\emptyset$ and for each $x\in X$ let $N_{k+s}(x)$ be the elements of $P_r\setminus S_r$ of cardinality $k+s$ which are comparable to $x$. By the result of the previous paragraph, $x$ is comparable to fewer than $\binom{\left\lceil (n+k)/2\right\rceil}{k}$ elements of $S_r$ and so 
\[|N_{k+s}(x)|> \binom{n-s}{k} - \binom{\left\lceil (n+k)/2\right\rceil}{k} \geq \binom{n-k+1}{k}-\binom{\left\lceil (n+k)/2\right\rceil}{k}\]
which, if $n_0(k)$ is chosen large enough with respect to $k$, is at least
\[\binom{n}{k-1} \geq \binom{n}{s} \geq |X|.\]
Therefore, we can greedily associate each element $x$ of $X$ to an element $y_x$ of $P_r\setminus S_r$ of cardinality $k+s$ such that $x$ is comparable to $y_x$ and the $y_x$ are distinct from one another. It is clear that the set  $S_r':=(S_r\setminus X)\cup \{y_x: x\in X\}$ has the same cardinality as $S_r$ and at most as many comparable pairs as $S_r$. So, in what follows, we can assume that $X=\emptyset$; that is, every element of $S_r$ has cardinality at least $k+s$.

Now, define $P_r^*$ to be the poset obtained from $P_r$ by deleting all sets of cardinality $s$. Let $D$ be a digraph with $V(D)=P_r^*$ containing every arc $(x,y)$ where either
\begin{itemize}
\item $x$ and $y$ are comparable and the cardinality of $y$ is closer to $n/2$ than the cardinality of $x$, or
\item  $x<y$ and $|x| = n-|y|$.
\end{itemize}
Let $\mathcal{C}_\mu$ be a random chain in $P_r^*$ obtained by taking a maximal chain in $\mathcal{P}(n)$ uniformly at random and intersecting it with $P_r^*$. Then any two elements of $P_r^*$ of the same cardinality are equally likely to be contained in $\mathcal{C}_\mu$ and so
\[\mathbb{P}\left(x\in \mathcal{C}_\mu\right)\geq \binom{n}{\left\lceil \frac{n-k+1+2r}{2}\right\rceil}^{-1}\text{ for all $x\in P_r^*$}.\]
Also, given $(x,y)\in E(D)$, we have
\[\mathbb{P}\left(y\in \mathcal{C}_\mu\mid x\in\mathcal{C}_\mu\right) =\begin{cases}
\binom{|x|}{|y|}^{-1} & \text{if $x>y$},\\
\binom{n-|x|}{|y|-|x|}^{-1}& \text{otherwise}.
  \end{cases}
\]
By definition of $D$ and the fact that $X=\emptyset$, we get that this quantity is always at most $\binom{\left\lceil (n+k)/2\right\rceil}{k}^{-1}$. Thus, \eqref{theEllCase} holds by Lemma~\ref{randomCount}.
\end{proof}

The proof of Theorem~\ref{booleanThm} above exploits the following two properties of a uniformly random maximal chain $\mathcal{C}_\mu$ in the boolean lattice:
\begin{itemize}
\item  any two subsets of $[n]$ of the same cardinality are equally likely to be contained in $\mathcal{C}_\mu$ and
\item for any two comparable pairs $(x_1,y_1)$ and $(x_2,y_2)$ in $\mathcal{P}(n)$ such that $|x_1|=|x_2|$ and $|y_1|=|y_2|$, we have $\mathbb{P}\left(x_1,y_1\in\mathcal{C}_\mu\right) = \mathbb{P}\left(x_2,y_2\in\mathcal{C}_\mu\right)$.
\end{itemize}
It is clear that one can often obtain a supersaturation result for posets satisfying these two conditions by taking a maximal chain uniformly at random and applying Lemma~\ref{randomCount}. We demonstrate this by outlining a proof of Theorem~\ref{vecSpThm}.

\begin{proof}[Proof of Theorem~\ref{vecSpThm}]
For $0\leq r\leq k-1$, let $P_r$ be the poset consisting all subspaces of $\mathbb{F}_q^n$ of dimension equivalent to $\left\lceil \frac{n-k+1+2r}{2}\right\rceil$ modulo $k$ ordered by inclusion. Analogous to the proof of Theorem~\ref{booleanThm}, we define $S_r:=S\cap P_{r}$ and
\[t_r:=|S_r| - \qbinom{n}{\left\lceil \frac{n-k+1+2r}{2}\right\rceil}{q}.\] 
Clearly, $\sum_{r=0}^{k-1}t_r = t$. Our goal is to prove that, for each $r=0,\dots,k-1$,
\begin{equation}\label{theEllCaseVec}\comp(S_r)\geq t_r\qbinom{\left\lceil (n+k)/2\right\rceil}{k}{q}.\end{equation} 

We prove \eqref{theEllCaseVec} by induction on $t_r$ where the case $t_r\leq 0$ is trivial. By the inductive hypothesis, we can assume that every element of $S_r$ is comparable to fewer than $\qbinom{\left\lceil (n+k)/2\right\rceil}{k}{q}$ other elements of $S_r$. If $S_r$ contains an element of dimension $s<k$, then we can shift all such elements up to subspaces of dimension $k+s$ without increasing the number of comparable pairs (as in the proof of Theorem~\ref{booleanThm}) since
\[\qbinom{n-k+1}{k}{q} - \qbinom{\left\lceil (n+k)/2\right\rceil}{k}{q} \geq \qbinom{n}{k-1}{q}\]
if $n_0(k)$ is chosen large enough with respect to $k$. 

Now, we let $P_r^*$ be the poset obtained from $P_r$ by deleting all elements of dimension less than $k$ and let $D$ be a digraph with $V(D)=P_r^*$ containing every arc $(x,y)$ such that either
\begin{itemize}
\item $x$ is comparable to $y$ and the dimension of $y$ is closer to $n/2$ than the dimension of $x$, or
\item $x<y$ and the dimension of $x$ is $n$ minus the dimension of $y$. 
\end{itemize}
Let $\mathcal{C}_\mu$ be a uniformly random chain in $P_r^*$ obtained by taking a random maximal chain in $\Vectors{q}{n}$ and intersecting it with $P_r^*$. It is not hard to show that
\[\mathbb{P}\left(x\in \mathcal{C}_\mu\right)\geq \qbinom{n}{\left\lceil \frac{n-k+1+2r}{2}\right\rceil}{q}^{-1}\text{ for all $x\in P_r^*$ and}\]
\[\mathbb{P}\left(y\in\mathcal{C}_\mu\mid x\in\mathcal{C}_\mu\right)\leq \qbinom{\left\lceil (n+k)/2\right\rceil}{k}{q}^{-1}\text{ for all $(x,y)\in E(D)$.}\]
The result follows.
\end{proof}

Our main difficulty in proving Theorem~\ref{multisetThm} is that one can no longer obtain a good bound by taking a uniformly random maximal chain in $\{0,1,2\}^n$ and applying Lemma~\ref{randomCount}. Much of the next section is devoted to constructing a more appropriate distribution on chains in $\{0,1,2\}^n$ and studying its properties.

\section{Supersaturation in \texorpdfstring{$\boldsymbol{\{0,1,2\}^n}$}{\{0,1,2\}\textasciicircum n}}
\label{multisetSec}

We open this section with a basic definition.

\begin{defn}
A poset $P$ is said to be \emph{ranked} if it can be partitioned into non-empty sets $L_0,\dots,L_N$ called \emph{levels} such that 
\begin{itemize}
\item if $x\in L_i$ and $x<y$, then $y\in \bigcup_{j>i} L_j$,
\item if $x<y$ such that $x\in L_i$ and $y\in L_j$ for $j>i+1$, then there exists $z\in L_{i+1}$ such that $x<z<y$. 
\end{itemize}
If $x\in L_i$, then we say that the \emph{rank} of $x$ is $i$.
\end{defn}

It is clear that $\{0,1,2\}^n$ is a ranked poset where $L_i$ is the set of vectors in $\{0,1,2\}^n$ whose coordinates sum to $i$. Also, the posets $\mathcal{P}(n)$ and $\Vectors{q}{n}$ are both ranked posets where the rank of an element is equal to its cardinality or dimension, respectively.

In the previous section, we used the fact that a uniformly random chain of maximal length in $\mathcal{P}(n)$ or $\Vectors{q}{n}$ is equally likely to contain any element of rank $i$. The poset $\{0,1,2\}^n$ does not have this property. That is, one can easily show that for $n\geq2$ a uniformly random maximal chain is not equally likely to contain any given element in $L_i$ for $2\leq i\leq 2n-2$. Therefore, if we are to apply Lemma~\ref{randomCount} to obtain a supersaturation result which applies to all subsets of $\{0,1,2\}^n$ of cardinality greater than $\ell_n(n)$, then we will require a different distribution on the chains of $\{0,1,2\}^n$. 

\begin{defn}
Let $P$ be a ranked poset with non-empty levels $L_0,\dots,L_N$. A non-empty finite multiset $\mathscr{C}$ of maximal chains in $P$ is called a \emph{regular covering by chains} if for  $0\leq i\leq N$ every element of $L_i$ is contained in exactly $|\mathscr{C}|/|L_i|$ chains of $\mathscr{C}$. 
\end{defn}

As noted above,  the collection of all maximal chains in $\{0,1,2\}^n$ is not a regular covering by chains.  However, Anderson~\cite{Andersondivisors} proved that $\{0,1,2\}^n$ does possess a regular covering by chains (in fact, he proved that the same is true for the poset of divisors of any integer; see also~\cite{AndersonBook}). The following useful result of Kleitman~\cite{Kleitman} characterises ranked posets with regular coverings by chains in terms of two other natural conditions, one of which will be used in our proof. 

\begin{defn}
Let $P$ be a ranked poset with levels $L_0,\dots,L_N$. Then $P$ is said to have the \emph{LYM property} if, for every antichain $A\subseteq P$,
\[\sum_{i=0}^N\frac{|A\cap L_i|}{|L_i|}\leq 1.\]
\end{defn}

\begin{defn}
Let $P$ be a ranked poset with levels $L_0,\dots,L_N$. Then $P$ is said to have the \emph{normalised matching property} if for $0\leq i\leq N-1$ and $T\subseteq L_i$ we have
\[\frac{|\{y\in L_{i+1}: y>x \text{ for some }x\in T\}|}{|L_{i+1}|}\geq \frac{|T|}{|L_i|}.\]
\end{defn}

\begin{thm}[Kleitman~\cite{Kleitman}]
\label{Kequiv}
Let $P$ be a ranked poset. The following are equivalent: (1) $P$ has a regular covering by chains, (2) $P$ has the LYM property and (3) $P$ has the normalised matching property.
\end{thm}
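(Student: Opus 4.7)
The plan is to prove the equivalence by establishing the cycle of implications $(1) \Rightarrow (2) \Rightarrow (3) \Rightarrow (1)$. The first two implications are soft and the substance lies in the last.

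For $(1) \Rightarrow (2)$, I would let $\mathscr{C}$ be a regular covering by chains and take $\mathcal{C}_\mu$ to be a uniformly random chain from $\mathscr{C}$. By the defining property of a regular covering, each $x \in L_i$ satisfies $\mathbb{P}(x \in \mathcal{C}_\mu) = 1/|L_i|$. For any antichain $A$ we have $|A \cap \mathcal{C}_\mu| \leq 1$ since any two distinct elements of a chain are comparable, and taking expectations yields the LYM inequality. For $(2) \Rightarrow (3)$ I would argue by contrapositive: if the normalised matching property fails at some level $i$, there is $T \subseteq L_i$ with $|T^+|/|L_{i+1}| < |T|/|L_i|$, where $T^+ := \{y \in L_{i+1} : y > x \text{ for some } x \in T\}$. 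Then $A := T \cup (L_{i+1} \setminus T^+)$ is an antichain -- elements of the same level are incomparable by the first ranked axiom, and nothing in $T$ lies below anything in $L_{i+1} \setminus T^+$ by definition of $T^+$ -- and
\[\sum_j \frac{|A \cap L_j|}{|L_j|} = \frac{|T|}{|L_i|} + 1 - \frac{|T^+|}{|L_{i+1}|} > 1,\]
contradicting LYM.

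The real work is in $(3) \Rightarrow (1)$, and the plan is to invoke Hall's theorem after a blow-up. Let $M$ be a common multiple of $|L_0|, \ldots, |L_N|$ (for concreteness, $M = \prod_i |L_i|$), and for each $i$ form a multiset $L_i'$ by replacing each $x \in L_i$ with $M/|L_i|$ copies, so that $|L_i'| = M$. For each consecutive pair of levels, build a bipartite graph $B_i$ on $L_i' \cup L_{i+1}'$ whose edges join a copy of $x$ to a copy of $y$ whenever $x <_P y$. To verify Hall's condition on $B_i$, take any $S \subseteq L_i'$, let $T \subseteq L_i$ be the set of underlying elements represented in $S$, and observe that $|S| \leq |T| \cdot M/|L_i|$ while $|N(S)| = |T^+| \cdot M/|L_{i+1}|$; the normalised matching inequality $|T^+|/|L_{i+1}| \geq |T|/|L_i|$ immediately gives $|N(S)| \geq |S|$. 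Hence a perfect matching $\phi_i \colon L_i' \to L_{i+1}'$ exists. Composing $\phi_0, \ldots, \phi_{N-1}$ traces $M$ vertex-disjoint paths through the blow-up, one starting at each copy in $L_0'$; projecting each path back to $P$ gives a maximal chain (maximality uses the second ranked axiom to rule out any element strictly between consecutive levels and the fact that $L_0$ and $L_N$ consist of minimal and maximal elements), and the resulting multiset of $M$ chains is a regular covering since each $x \in L_i$ appears in exactly $M/|L_i|$ of them.

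The main obstacle is the bookkeeping in the $(3) \Rightarrow (1)$ direction: one must set up the blow-up so that Hall's condition for $B_i$ really does translate into the normalised matching inequality, and then argue that composing matchings across all levels yields chains that project to genuine maximal chains of $P$. The first point is routine once the multiplicities are correctly tracked; the second is easy to overlook but is an immediate consequence of the two axioms in the definition of a ranked poset.
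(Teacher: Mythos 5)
The paper does not prove Theorem~\ref{Kequiv} itself --- it simply cites Kleitman and uses the result as a black box --- so there is no in-paper proof to compare against. Your argument is correct and is, in fact, essentially the standard proof. A few small remarks for completeness. In $(1)\Rightarrow(2)$, the key fact $\mathbb{P}(x\in\mathcal{C}_\mu)=1/|L_i|$ follows directly from the definition of a regular covering, and linearity of expectation then gives LYM; this is just Lubell's argument with the regular covering replacing the set of all maximal chains. In $(2)\Rightarrow(3)$, your verification that $A=T\cup(L_{i+1}\setminus T^+)$ is an antichain is correct: the first ranked axiom forbids $y<x$ for $x\in L_i$, $y\in L_{i+1}$, and the definition of $T^+$ forbids $x<y$ when $y\notin T^+$. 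In $(3)\Rightarrow(1)$, two points worth making explicit: Hall's condition in your blow-up automatically handles the degenerate case where some $x\in L_i$ might have no upper neighbour in $L_{i+1}$, since NMP applied to $T=\{x\}$ rules this out; and the projected chains really are maximal chains because the second ranked axiom forces any maximal chain to meet every level, while the first ranked axiom forces any $z\in L_k$ comparable to all of $x_0<\dots<x_N$ (with $x_j\in L_j$) to coincide with $x_k$. Your multiset of $M$ projected chains then contains each $x\in L_i$ exactly $M/|L_i|$ times, which is precisely the definition of a regular covering.
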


The distribution $\mu$ on the maximal chains of $\{0,1,2\}^n$ that we construct essentially amounts to choosing a uniformly random chain from a regular covering by chains. One subtle point is that, in the proof of Theorem~\ref{multisetThm}, it will be useful to insist that the distribution $\mu$ is `memoryless' in the sense that
if $z_1<z_2<\dots<z_q$ in $\{0,1,2\}^n$ and $\mathcal{C}_\mu$ is a chain chosen randomly according to $\mu$, then
\begin{equation}\label{memoryless}\mathbb{P}\left(z_q\in\mathcal{C}_\mu\mid z_1,\dots,z_{q-1}\in\mathcal{C}_\mu\right)=\mathbb{P}\left(z_q\in\mathcal{C}_\mu\mid z_{q-1}\in\mathcal{C}_\mu\right).\end{equation}
Note that a chain chosen uniformly at random from a regular covering by chains need not satisfy \eqref{memoryless}. Also note that, if \eqref{memoryless} holds for every such sequence, then one can easily deduce
\[\mathbb{P}\left(z_1\in\mathcal{C}_\mu\mid z_2,\dots,z_{q}\in\mathcal{C}_\mu\right)=\mathbb{P}\left(z_1\in\mathcal{C}_\mu\mid z_{2}\in\mathcal{C}_\mu\right)\]
as well. Before constructing $\mu$, we require a couple of definitions.

\begin{defn}
For integers $i$ and $s$, let $L_{i}^s$ be the elements of $L_i$ with exactly $s$ coordinates equal to $2$. Note that $L_i^s$ is non-empty if and only if $\max\{0,i-n\}\leq s\leq \left\lfloor i/2\right\rfloor$. 
\end{defn} 

\begin{defn}
For integers $i$ and $s$, define 
\[L_{i}^{\geq s}:=\bigcup_{r=s}^{\left\lfloor i/2\right\rfloor} L_i^r\text{ and}\]
\[L_{i}^{\leq s}:=\bigcup_{r=\max\{0,i-n\}}^{s} L_i^r.\]
\end{defn}

Our goal is to construct a distribution $\mu$ on the maximal chains in $\{0,1,2\}^n$ such that \eqref{memoryless} holds and, for $0\leq i\leq 2n$, every element of $L_i$ is equally likely to be contained in $\mathcal{C}_\mu$. We begin by insisting that $\mathcal{C}_\mu$ contains $(0,\dots,0)$ with probability one and that it is equally likely to contain any element of $L_1$. 

Now, suppose that $\mathcal{C}_\mu$ has been constructed up to level $i$ where $1\leq i\leq 2n-2$. In order for \eqref{memoryless} to hold, we must choose an element of $L_{i+1}$ to add to $\mathcal{C}_\mu$ independently of the choice of $\mathcal{C}_\mu\cap\left(\bigcup_{j=1}^{i-1}L_j\right)$. That is, our goal is to define the values of $\mathbb{P}(y\in \mathcal{C}_\mu\mid x\in\mathcal{C}_\mu)$ for all $x\in L_i$ and $y\in L_{i+1}$ with $x<y$ in such a way that each element of $L_{i+1}$ is equally likely to be contained in $\mathcal{C}_\mu$.

We rephrase the problem of defining these conditional probabilities as follows. Suppose that we define a weighting of $L_{i+1}$ by assigning each element of $L_i$ a weight of $1/|L_i|$ and then redistributing the weight in such a way that each $x\in L_i$ sends weight $w_{x,y}\geq0$ to each $y\in L_{i+1}$ comparable to $x$, where $\sum_{\substack{y\in L_{i+1}\\ x<y}}w_{x,y}=1/|L_i|$. What we would like to do is to fix the weights $w_{x,y}$ in such a way that every element of $L_{i+1}$ receives weight $1/|L_{i+1}|$. Note that, for $\max\{0,i-n\}\leq s\leq \left\lfloor (i-2)/2\right\rfloor$ all of the weight sent by elements of $L_i^{\geq s+1}$ is received by elements of $L_{i+1}^{\geq s+1}$. So, the total weight received by the set $L_{i+1}^{\geq s+1}$ is 
\[\frac{\left|L_i^{\geq s+1}\right|}{|L_i|} + \sum_{\substack{(x,y)\in L_i^{s}\times L_{i+1}^{s+1}\\ x<y}}w_{x,y}.\]
In order to achieve the desired weighting of $L_{i+1}$, we need this quantity to be equal to $\frac{\left|L_{i+1}^{\geq s+1}\right|}{|L_{i+1}|}$, which implies that
\begin{equation}\label{more2s}\sum_{\substack{(x,y)\in L_i^{s}\times L_{i+1}^{s+1}\\ x<y}}w_{x,y} = \frac{\left|L_i\right|\left|L_{i+1}^{\geq s+1}\right|-\left|L_{i+1}\right|\left|L_i^{\geq s+1}\right|}{\left|L_i\right|\left|L_{i+1}\right|}.\end{equation}
Similarly since, for $\max\{0,i+1-n\}\leq s\leq \left\lfloor i/2\right\rfloor$, all of the weight sent by elements of $L_i^{\leq s-1}$ is received by elements of $L_{i+1}^{\leq s}$, we require that
\begin{equation}\label{same2s}\sum_{\substack{(x,y)\in L_i^{s}\times L_{i+1}^{s}\\ x<y}}w_{x,y} = \frac{|L_i|\left|L_{i+1}^{\leq s}\right|-\left|L_i^{\leq s-1}\right||L_{i+1}|}{|L_i||L_{i+1}|}.\end{equation}
We insist that our construction has the property that $w_{x,y}$ depends only on the number of coordinates of $x$ and $y$ which are equal to two. That is, if $x\in L_i^s$ and $y\in L_{i+1}^{s+1}$ for some $s$, then set $w_{x,y}=w_{i,s}$ and if $x\in L_i^s$ and $y\in L_{i+1}^s$, then set $w_{x,y}=w_{i,s}'$. The left sides of  \eqref{more2s} and \eqref{same2s} become $(i-2s)|L_i^s|w_{i,s}$ and $(n-i+s)|L_i^s|w_{i,s}'$, respectively. Rearranging, we get that, for  $\max\{0,i-n\}\leq s\leq \left\lfloor (i-1)/2\right\rfloor$, 
\begin{equation}\label{wExpression}w_{i,s}=\frac{\left|L_i\right|\left|L_{i+1}^{\geq s+1}\right| - \left|L_{i+1}\right|\left|L_i^{\geq s+1}\right|}{\left|L_i^s\right|\left|L_i\right|\left|L_{i+1}\right|(i-2s)}\end{equation}
and, for $\max\{0,i-n+1\}\leq s\leq \left\lfloor i/2\right\rfloor$, 
\begin{equation}\label{w'Expression}w_{i,s}'=\frac{\left|L_i\right|\left|L_{i+1}^{\leq s}\right| - \left|L_{i+1}\right|\left|L_i^{\leq s-1}\right|}{\left|L_i^s\right|\left|L_i\right|\left|L_{i+1}\right|(n-i+s)}.\end{equation}
For completeness, if $i$ is even, then set $w_{i,i/2}:=0$ and if $i\geq n$, set $w_{i,i-n}':=0$. The weights $w_{i,s}$ and $w_{i,s}'$ are all non-negative since $\{0,1,2\}^n$ has the normalised matching property.  

So, we extend the chain $\mathcal{C}_\mu$ to level $i+1$ in the following way. Given that $\mathcal{C}_\mu$ contains $x\in L_i^s$, then we add an element $y$ of $L_{i+1}$ comparable to $x$ to $\mathcal{C}_\mu$, independently of the choice of $\mathcal{C}_\mu\setminus\{x\}$, with probability $|L_i|w_{i,s}$ if $y\in L_{i+1}^{s+1}$ and with probability $|L_i|w_{i,s}'$ if $y\in L_{i+1}^s$. Note that $x$ is comparable to $i-2s$ elements of $L_{i+1}^{s+1}$ and $n-i+s$ elements of $L_{i+1}^s$ and that $(i-2s)w_{i,s} + (n-i+s)w_{i,s}'=1/|L_i| = \mathbb{P}(x\in\mathcal{C}_\mu)$ by definition. 

Let us verify that any element of $L_{i+1}$ is contained in $\mathcal{C}_\mu$ with probability $1/|L_{i+1}|$. By construction, given $y\in L_{i+1}^t$, the probability that $y$ is contained in $\mathcal{C}_\mu$ is
\[\sum_{\substack{x\in L_i \\ x<y}}\mathbb{P}\left(x,y\in\mathcal{C}_\mu\right) = \sum_{\substack{x\in L_i^{t-1} \\ x<y}}\mathbb{P}\left(x\in\mathcal{C}_\mu\right)\mathbb{P}\left(y\in\mathcal{C}\mid x\in\mathcal{C}_\mu\right)+\sum_{\substack{x\in L_i^{t} \\ x<y}}\mathbb{P}\left(x\in\mathcal{C}_\mu\right)\mathbb{P}\left(y\in\mathcal{C}\mid x\in\mathcal{C}_\mu\right).\]
If $t=0$, then the first sum is zero and the second is equal to $(i+1)w_{i,0}'=\frac{(i+1)\left|L_{i+1}^{\leq 0}\right|}{\left|L_i^0\right||L_{i+1}|(n-i)} = 1/|L_{i+1}|$. If $i$ is odd and $t=(i+1)/2$, then the second sum is zero and the first is equal to $\left(\frac{i+1}{2}\right)w_{i,(i-1)/2}= \frac{(i+1)\left|L_{i+1}^{\geq (i+1)/2}\right|}{2\left|L_i^{(i-1)/2}\right||L_{i+1}|} = 1/|L_{i+1}|$. Finally, if $\max\{i+1-n,1\} \leq t\leq \left\lfloor i/2\right\rfloor$, then the above expression becomes
\[tw_{i,t-1} + (i+1-2t)w_{i,t}'\]
\begin{align*}
&= \frac{t\left(\left|L_i\right|\left|L_{i+1}^{\geq t}\right| - \left|L_{i+1}\right|\left|L_i^{\geq t}\right|\right)}{\left|L_i^{t-1}\right|\left|L_i\right|\left|L_{i+1}\right|(i-2t+2)} + \frac{(i+1-2t)\left(\left|L_i\right|\left|L_{i+1}^{\leq t}\right| - \left|L_{i+1}\right|\left|L_i^{\leq t-1}\right| \right)}{\left|L_i^t\right|\left|L_i\right|\left|L_{i+1}\right|(n-i+t)}\\
&= \frac{\left(\left|L_i\right|\left|L_{i+1}^{\geq t}\right| - \left|L_{i+1}\right|\left|L_i^{\geq t}\right|\right)}{\binom{n}{i+1-t}\binom{i+1-t}{t}\left|L_i\right|\left|L_{i+1}\right|} + \frac{\left(\left|L_i\right|\left|L_{i+1}^{\leq t}\right| - \left|L_{i+1}\right|\left|L_i^{\leq t-1}\right| \right)}{\binom{n}{i+1-t}\binom{i+1-t}{t}\left|L_i\right|\left|L_{i+1}\right|}\\
&=\frac{|L_{i+1}^t|}{\binom{n}{i+1-t}\binom{i+1-t}{t}|L_{i+1}|}=\frac{1}{\left|L_{i+1}\right|}.\end{align*}
We complete the construction by insisting that $\mathcal{C}_\mu$ contains $(2,\dots,2)$ with probability one. 

Most of the work in this section is devoted to proving the following lemma. 

\begin{lem}
\label{conditionalProb}
Let $k\geq1$ be a fixed integer, let $n$ be large with respect to $k$ and let $\mathcal{C}_\mu$ be a chain chosen randomly according to $\mu$. Let $0\leq i,j\leq 2n$ be integers such that
\begin{itemize}
\item $|i-j|\geq k$, 
\item either $\min\{i,j\}\geq 2k$ or $\max\{i,j\}\leq 2n-2k$, and
\item $|j-n|\leq |i-n|$.
\end{itemize}
If $x\in L_i$ and $y\in L_j$ are such that $x$ is comparable to $y$, then 
\[\mathbb{P}\left(y\in \mathcal{C}_\mu\mid x\in\mathcal{C}_\mu\right)\leq \left(\frac{\ell_{3k-1}(n)}{\ell_{2k-1}(n)} - 1\right)^{-1}.\]
\end{lem}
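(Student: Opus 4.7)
The plan is to split into two cases based on whether $x < y$ or $y < x$.

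In the case $y < x$, Bayes' rule gives
\[
\mathbb{P}(y \in \mathcal{C}_\mu \mid x \in \mathcal{C}_\mu) \;=\; \mathbb{P}(x \in \mathcal{C}_\mu \mid y \in \mathcal{C}_\mu) \cdot \frac{|L_i|}{|L_j|}.
\]
The hypothesis $|j - n| \leq |i - n|$, combined with the unimodality of $\ell_0(n), \ldots, \ell_{2n}(n)$, forces $|L_i| \leq |L_j|$, so this is at most the ``forward'' probability $\mathbb{P}(x \in \mathcal{C}_\mu \mid y \in \mathcal{C}_\mu)$ from $y$ up to $x$; after relabelling, this reduces to the $x < y$ case of the lemma \emph{without} requiring the $|j - n| \leq |i - n|$ assumption.

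For the case $x < y$, I would compute $\mathbb{P}(y \in \mathcal{C}_\mu \mid x \in \mathcal{C}_\mu)$ directly via the Markov structure of $\mu$. By \eqref{memoryless}, this probability equals the sum over all saturated chains $x = z_i < z_{i+1} < \cdots < z_j = y$ in the Hasse diagram of the product of the one-step transition probabilities; each transition $z_h \to z_{h+1}$ has probability $|L_h| w_{h, r_h}$ or $|L_h| w_{h, r_h}'$ according to whether the $2$-count increases or stays the same, and hence depends only on the ``type sequence'' $(h, r_h)$ realized by the chain. After substituting \eqref{wExpression} and \eqref{w'Expression}, the transition probabilities simplify considerably: for instance, the $(h, r) \to (h + 1, r)$ transition contributes $(|L_h||L_{h+1}^{\leq r}| - |L_{h+1}||L_h^{\leq r - 1}|)/(|L_h^r||L_{h+1}|)$ per specific successor. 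Grouping the sum by type sequence then yields an expression that, by the $S_n$-invariance of the construction, depends on $(x, y)$ only through the ``profile'' of the pair, i.e.\ the triple $(a, b, c)$ counting the coordinates $\ell$ where $(x_\ell, y_\ell)$ equals $(0, 1)$, $(0, 2)$, $(1, 2)$ respectively. What remains is then a manageable combinatorial sum.

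The main obstacle is tracking the profile dependence. Unlike the simpler posets $\mathcal{P}(n)$ and $\Vectors{q}{n}$ treated in Section~\ref{exampleSec}, in $\{0, 1, 2\}^n$ two distinct elements $y, y' \in L_j^t$ lying above the same $x$ can have different profiles and therefore different conditional probabilities, so the bound must be verified profile-by-profile --- with the extremum plausibly attained on $b = 0$, i.e.\ when no coordinate of $x$ is lifted directly from $0$ to $2$. The appearance of $\ell_{3k-1}(n)/\ell_{2k-1}(n)$ (rather than $\ell_{3k}(n)/\ell_{2k}(n)$) in the statement pinpoints the extremal regime as $|i - j| = k$ with $\min\{i, j\} = 2k$, or dually $\max\{i, j\} = 2n - 2k$ (the two branches being interchanged via the involution $i \mapsto 2n - i$, $s \mapsto s + n - i$); in this regime the appendix estimates on $\ell_r(n)$ and $|L_h^r|$ allow the final bound to be pushed through.
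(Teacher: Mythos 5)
The reduction you propose for the case $y<x$ does not work as stated. You correctly write $\mathbb{P}(y\in\mathcal{C}_\mu\mid x\in\mathcal{C}_\mu)=\mathbb{P}(x\in\mathcal{C}_\mu\mid y\in\mathcal{C}_\mu)\,|L_i|/|L_j|$ and use unimodality to drop the factor $|L_i|/|L_j|\le 1$, reducing to bounding the forward probability $\mathbb{P}(x\mid y)$ \emph{without} the hypothesis $|j-n|\le|i-n|$. But after this relabelling the destination $x$ is \emph{farther} from level $n$ than the source $y$, and in that regime the forward probability is not small: for instance, with $j=n$ and $i=2n$ (so $x=(2,\dots,2)$, which every maximal chain contains), $\mathbb{P}(x\mid y)=1$. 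The ratio $|L_i|/|L_j|$ that you discarded is precisely what keeps $\mathbb{P}(y\mid x)$ small in such cases, so the bound ``$\le\mathbb{P}(x\mid y)$'' is true but vacuous. The paper handles this case not by Bayes' rule but by the order-reversing involution $r\mapsto 2n-r$ (equivalently $z\mapsto(2-z_1,\dots,2-z_n)$), under which $\mu$ is invariant by \eqref{sym1}--\eqref{sym2}; this lets one assume $i<n$ outright, turning the constraint into $i<j$ and $i+j\le 2n$ without any loss.

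For the $x<y$ direction, your proposed direct computation (sum over saturated chains, group by profile, identify the extremal profile) is a genuinely different strategy from the paper's, and it is left as a sketch with the hardest step (``the bound must be verified profile-by-profile'') unexecuted. The paper instead splits into three cases: for $k\le j-i\le 11n/10$ it uses the crude one-step bound of Corollary~\ref{OneStep} multiplied along chains and a count of at most $(j-i)!$ chains; for $j-i>11n/10$ with $i+j=2n$ it fixes the pair $(x,y)$ maximising the conditional probability, proves a local-perturbation claim (Claim~\ref{1To2}) forcing the extremal $y$ to have $y_\beta=2$ whenever $x_\beta\ge 1$, and then counts the many $y'$ attaining the same conditional probability to conclude each must be small; and for $j-i>11n/10$ with $i+j<2n$ it inducts on $2n-i-j$ by inserting a single intermediate $z\in L_{i+1}$. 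None of this resembles your profile sum, and your stated intuition that the extremum sits at $\min\{i,j\}=2k$ with $b=0$ is not justified and is not what the paper's argument isolates (the crude product bound already suffices for $|i-j|$ up to about $n/10$, and the hard regime is the large-gap one). As written, the proposal has a concrete error in one half and an unsubstantiated sketch in the other.
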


We now derive Theorem~\ref{multisetThm} from Lemma~\ref{conditionalProb} after which we will turn our attention to proving the lemma itself.

\begin{proof}[Proof of Theorem~\ref{multisetThm} assuming Lemma~\ref{conditionalProb}]
For $0\leq r\leq k-1$, let $P_r$ be the poset consisting all elements of $\{0,1,2\}^n$ of rank equivalent to $n+\left\lceil\frac{2r-k+1}{2}\right\rceil$ modulo $k$ with partial order inherited from $\{0,1,2\}^n$. Define $S_r:=S\cap P_{r}$ and
\[t_r:=|S_r| - \ell_{n+\left\lceil\frac{2r-k+1}{2}\right\rceil}(n).\] 
Clearly, $\sum_{r=0}^{k-1}t_r = t$. We prove that, for each $r=0,\dots,k-1$,
\begin{equation}\label{theEllCaseMulti}\comp(S_r)\geq \left(\frac{\ell_{3k-1}(n)}{\ell_{2k-1}(n)} - 1\right)t_r.\end{equation} 

We prove \eqref{theEllCaseMulti} by induction on $t_r$ where the case $t_r\leq 0$ is trivial. By the inductive hypothesis, we can assume that every element of $S_r$ is comparable to fewer than $\frac{\ell_{3k-1}(n)}{\ell_{2k-1}(n)} - 1$ other elements of $S_r$. 

Next, we prove that it suffices to consider the case that $S_r$ contains no elements of rank less than $2k$. First, let $A$ be the set of all elements of $S_r$ of rank less than $k$. Clearly, each element of $A$ has rank $r$. Let us show that we can assume that $A$ is empty. For each subset $T$ of $A$, let $\Gamma(T)$ denote the elements of $L_{r+k}$ comparable to at least one member of $T$. If we had $|\Gamma(T)\setminus S_{r}|\geq |T|$ for every subset $T$ of $A$, then by Hall's Marriage Theorem~\cite{Hall} we could associate each element of $x\in A$ to an element $f(x)$ of $L_{r+k}\setminus S_{r}$ such that $x<f(x)$ and the function $f$ is injective. Given such a function $f$, it is clear that the set $S':=\left(S_r\setminus A\right)\cup f(A)$ has the same cardinality as $S_r$ and at most as many comparable pairs as $S_r$. 

Thus, we suppose that no such $f$ exists and so there there must be a set $T\subseteq A$ such that $|\Gamma(T)\setminus S_{r}| < |T|$. Since $\{0,1,2\}^n$ has the normalised matching property, we have
\[\frac{|\Gamma(T)\cap S_r|}{|T|} =\frac{|\Gamma(T)|}{|T|}- \frac{|\Gamma(T)\setminus S_r|}{|T|} > \frac{|L_{r+k}|}{|L_r|} -1\]
which is at least $\frac{\ell_{3k-1}(n)}{\ell_{2k-1}(n)}-1$ by Theorem~\ref{logConcave}. Putting these two facts together, we have
\begin{equation}\label{gammaT}|\Gamma(T)\cap S_r| > \left(\frac{\ell_{3k-1}(n)}{\ell_{2k-1}(n)}-1\right)|T|.\end{equation}
So, by the Pigeonhole Principle, there must be some element of $T$ which is comparable to more than $\frac{\ell_{3k-1}(n)}{\ell_{2k-1}(n)}-1$ elements of $S_r$, which is a contradiction. Thus, $S_r$ contains no elements of rank less than $k$. Applying the same argument to the elements of $S_r$ of rank $r+k$ (and using the fact that $S_r$ has no elements of rank $r$, which we just proved), we get that $S_r$ contains no elements of rank less than $2k$.

Now, we let $P_r^*$ be the poset obtained from $P_r$ by deleting all elements of rank less than $2k$. By the above argument, we can assume $S_r\subseteq P_r^*$. Let $D$ be a digraph with $V(D)=P_r^*$ containing every arc $(x,y)$ such that
\begin{itemize}
\item $x$ is comparable to $y$ and the rank of $y$ is closer to $n$ than the rank of $x$, or
\item $x<y$ and the rank of $x$ is $2n$ minus the rank of $y$. 
\end{itemize}
Let $\mathcal{C}_\mu$ be a random chain in $P_r^*$ obtained by taking a random chain in $\{0,1,2\}^n$ chosen according to $\mu$ and intersecting it with $P_r^*$. Provided that $n_0(k)$ is sufficiently large, the theorem now follows from Lemmas~\ref{randomCount} and~\ref{conditionalProb}.
\end{proof}

\subsection{Proof of Lemma~\ref{conditionalProb}}

It is clear by construction of $\mu$ that one can obtain an upper bound on  $\mathbb{P}\left(y\in \mathcal{C}_\mu\mid x\in \mathcal{C}_\mu\right)$ when $x$ and $y$ are in consecutive levels by bounding the `weight functions' $w_{i,s}$ and $w_{i,s}'$ from above. Our first step is to bound the weight functions relative to one another.

\begin{prop}
\label{wboundPROP}
For $1\leq i\leq n-1$ and $0\leq s < i/2$ we have $w_{i,s}' <  w_{i,s}$. 
\end{prop}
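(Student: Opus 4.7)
The plan is to compute $w_{i,s}-w_{i,s}'$ directly from the defining formulas~\eqref{wExpression} and~\eqref{w'Expression}. Setting $A=|L_i|$ and $B=|L_{i+1}|$, let $N_1=A|L_{i+1}^{\geq s+1}|-B|L_i^{\geq s+1}|$ denote the numerator of $w_{i,s}$ and let $N_2=B|L_i^{\geq s}|-A|L_{i+1}^{\geq s+1}|$, which is the numerator of $w_{i,s}'$ after rewriting using $|L_{i+1}^{\leq s}|=B-|L_{i+1}^{\geq s+1}|$ and $|L_i^{\leq s-1}|=A-|L_i^{\geq s}|$. A short calculation based on $|L_i^{\geq s}|=|L_i^s|+|L_i^{\geq s+1}|$ shows that $N_1+N_2=B|L_i^s|$, so the sign of $w_{i,s}-w_{i,s}'$ equals the sign of $(n-s)N_1-(i-2s)B|L_i^s|$. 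Thus the proposition reduces to the single inequality
\[
(n-s)\bigl[A|L_{i+1}^{\geq s+1}|-B|L_i^{\geq s+1}|\bigr]>B(i-2s)|L_i^s|.
\]

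For the next step, I would apply the edge-counting identities $(i-2s)|L_i^s|=(s+1)|L_{i+1}^{s+1}|$ and $(n-i+s)|L_i^s|=(i+1-2s)|L_{i+1}^s|$ (obtained by double-counting the 1-to-2 and 0-to-1 edges between $L_i$ and $L_{i+1}$, respectively) and rearrange the inequality into the equivalent form
\[
(n-s)\bigl[B|L_i^{\leq s-1}|-A|L_{i+1}^{\leq s-1}|\bigr]+|L_{i+1}^s|\bigl[B(i+1-2s)-(n-s)A\bigr]>0.
\]
The first bracket is non-negative by the normalised matching property of $\{0,1,2\}^n$ (which holds by Theorem~\ref{Kequiv}) applied to $L_{i+1}^{\leq s-1}$: this gives $A|L_{i+1}^{\leq s-1}|\leq B|L_i^{\leq s-1}|$. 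If $B(i+1-2s)\geq (n-s)A$ then the second bracket is also non-negative and the desired inequality follows immediately.

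The main obstacle is the remaining case $B(i+1-2s)<(n-s)A$, where the second bracket is negative and one needs a quantitative lower bound on the first. The key starting point is the base case $s=0$ (where the first bracket vanishes), which reduces to showing $(i+1)|L_{i+1}|>n|L_i|$ for $1\leq i\leq n-1$. To prove this I would exploit the generating function identity $(1+x+x^2)^n=\sum_i\ell_i(n)x^i$ together with the factorisation $(1-x)(1+x+x^2)=1-x^3$; a short computation with $P_n'(x)-nP_n(x)=nx(1-x)(1+x+x^2)^{n-1}$ yields
\[
(i+1)|L_{i+1}|-n|L_i|=n\bigl[\ell_{i-1}(n-2)-\ell_{i-4}(n-2)\bigr],
\]
which is strictly positive whenever $i\leq n-1$ by strict unimodality of $\ell_j(n-2)$ on the ascending side $j\leq n-2$. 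For general $s\geq 1$ I would propagate this bound by induction on $s$, using the recursion $U(s+1)-U(s)=B|L_i^s|-A|L_{i+1}^s|$ (where $U(s):=B|L_i^{\leq s-1}|-A|L_{i+1}^{\leq s-1}|$) together with the same generating-function estimate applied level-by-level to obtain a sufficiently sharp quantitative version of the first bracket to dominate the negative contribution of the second.
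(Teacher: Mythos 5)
Your algebraic reductions are correct: setting $A=|L_i|$, $B=|L_{i+1}|$, the equivalence of $w_{i,s}'<w_{i,s}$ with
\[(n-s)\bigl[B|L_i^{\leq s-1}|-A|L_{i+1}^{\leq s-1}|\bigr]+|L_{i+1}^s|\bigl[B(i+1-2s)-(n-s)A\bigr]>0\]
is valid, the edge-counting identities $(i-2s)|L_i^s|=(s+1)|L_{i+1}^{s+1}|$ and $(n-i+s)|L_i^s|=(i+1-2s)|L_{i+1}^s|$ are right, and the base case $s=0$ (where the first bracket vanishes and the claim becomes $(i+1)|L_{i+1}|>n|L_i|$) is proved cleanly via the identity $P_n'(x)-nP_n(x)=nx(1-x^3)P_{n-2}(x)$ together with strict monotonicity of $\ell_j(n-2)$ for $j\leq n-2$. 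However, the proof has a genuine gap: the case $s\geq 1$ with $B(i+1-2s)<(n-s)A$, which is the entire substance of the proposition, is only described as something you ``would'' do by ``propagating the bound by induction on $s$'' using the recursion $U(s+1)-U(s)=B|L_i^s|-A|L_{i+1}^s|$. That increment can have either sign (it equals $|L_i^s|\bigl(|L_{i+1}|-|L_i|\tfrac{n-i+s}{i+1-2s}\bigr)$, positive for small $s$ and negative as $s$ approaches $i/2$), so there is no level-by-level analogue of the normalised matching inequality to carry the $s=0$ bound forward, and it is not clear that ``the same generating-function estimate applied level-by-level'' yields anything usable for the trinomial-type quantities $|L_i^r|=\binom{n}{i-r}\binom{i-r}{r}$. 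A smaller issue: even in the regime $B(i+1-2s)\geq(n-s)A$, normalised matching gives only $U(s)\geq 0$, so the conclusion is non-strict unless you separately rule out equality; the proposition asserts strict inequality.

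The paper avoids all of this by a different and non-computational argument. It compares the weight scheme induced by $w_{i,s},w_{i,s}'$ (which by construction is uniform on $L_{i+1}$) against the ``even split'' scheme in which each $x\in L_i^s$ distributes weight $1$ equally among its $n-s$ upper neighbours; under the latter an element of $L_{i+1}^s$ receives weight $f(s)=\frac{s}{n-s+1}+\frac{i+1-2s}{n-s}$, which a short calculation shows is strictly decreasing in $s$. Since under both schemes all weight from $L_i^{\geq s+1}$ lands in $L_{i+1}^{\geq s+1}$ and all weight from $L_i^{\leq s-1}$ lands in $L_{i+1}^{\leq s}$, while the even-split scheme puts strictly more total weight on $L_{i+1}^{\leq s}$, the only way to account for the discrepancy is that $|L_i|w_{i,s}>\tfrac{1}{n-s}>|L_i|w_{i,s}'$, which gives the strict inequality for every $s$ at once with no case split and no base case. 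Your route buys an explicit algebraic identity for $(i+1)|L_{i+1}|-n|L_i|$, which is nice, but to complete a proof you would either need to actually supply the missing induction (which looks difficult given the sign changes in $U(s+1)-U(s)$) or switch to a comparison-of-redistributions argument of the paper's type.
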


\begin{proof}
Define a weighting of the elements of $L_{i+1}$ by first assigning weight one to each element of $L_i$ and then redistributing the weight so that, for $0\leq s\leq \left\lfloor i/2\right\rfloor$, each $x\in L_i^s$ sends weight $|L_i|w_{i,s}'$ to each $y\in L_{i+1}^s$ comparable to $x$ and weight $|L_i|w_{i,s}$  to each $y\in L_{i+1}^{s+1}$ comparable to $x$. By construction of the weight functions $w_{i,s}$ and $w_{i,s}'$ we see that, after redistributing, each element of $L_i$ has weight zero and each element of $L_{i+1}$ has weight $|L_i|/|L_{i+1}|$.

Now, we define a second weighting of $L_{i+1}$ by assigning each element of $L_i$ weight one and redistributing the weight so that each element of $L_i^s$ sends its weight evenly to the elements of $L_{i+1}$ comparable to it. Under this weighting, each $y\in L_{i+1}^s$ receives weight
\[f(s):=\frac{s}{n+1-s} + \frac{i+1-2s}{n-s}.\]
We claim that $f(s)$ is strictly decreasing in $s$ for $s$ in the range $0\leq s\leq \frac{i+1}{2}$. Indeed, for $0\leq s\leq \frac{i-1}{2}$, we have
\begin{align*}
f(s+1)-f(s) &= \left(\frac{s+1}{n-s} + \frac{i-2s-1}{n-s-1}\right)-\left(\frac{s}{n-s+1} + \frac{i-2s+1}{n-s}\right)\\
&= \frac{s(n-i-2) - (n-i)(n+1)}{(n-s+1)(n-s)(n-s-1)}=\frac{(n-i)(s-n-1)-2s}{(n-s+1)(n-s)(n-s-1)}.\end{align*}
The denominator is always positive for $0\leq s\leq \frac{n-2}{2}$. If $i\in\{n-2,n-1\}$, then the numerator is clearly negative. Finally, if $1\leq i\leq n-3$, then the numerator is again negative since $0\leq s\leq\frac{i-1}{2} \leq \frac{n-2}{2}<n+1$ and $n-i-2<n-i$. Thus, $f(s)$ is strictly decreasing in $s$. 

Therefore, for $0\leq s\leq \frac{i-1}{2}$, the elements of $L_{i+1}^{\leq s}$ receive more weight on average under the second weighting than do the elements of $L_{i+1}^{\geq s+1}$.  Note that, under both redistribution rules, all of the weight from $L_i^{\leq s-1}$ is sent to $L_{i+1}^{\leq s}$ and all of the weight from $L_i^{\geq s+1}$ is sent to $L_{i+1}^{\geq s+1}$. Therefore, the amount of weight received by the elements of $L_{i+1}^{\geq s+1}$ versus the amount received by elements of $L_{i+1}^{\leq s}$ is only affected by the amount of weight that the elements of $L_i^s$ choose to send to $L_{i+1}^s$ and $L_{i+1}^{s+1}$. As we have mentioned, the first redistribution rule assigns each element of $L_{i+1}$ to the same weight while the second assigns more weight to elements of $L_{i+1}$ which have fewer coordinates equal to two. Putting this all together,  it must be the case that, under the first redistribution rule, each $x\in L_{i}^s$ sends more weight to each element of $L_{i+1}^{s+1}$ comparable to it than it sends to each element of $L_{i+1}^{s}$ comparable to it. This proves that $w_{i,s}' < w_{i,s}$. 
\end{proof}

We also use a similar `weight redistribution trick' to obtain various bounds on the ratio $|L_{i+1}|/|L_i|$ which will be needed later in this section. Since these proofs are all fairly routine, and not particularly enlightening, we have included them in the appendix.

In what follows, it will be useful to notice that the weight functions $w_{i,s}$ and $w_{i,s}'$ exhibit a certain symmetry around the $n$th level. Specifically,
\begin{gather*} w_{2n-i-1,n-i+s}' = \frac{\left|L_{2n-i-1}\right|\left|L_{2n-i}^{\leq n-i+s}\right|- \left|L_{2n-i}\right|\left|L_{2n-i-1}^{\leq n-i+s-1}\right|}{\left|L_{2n-i-1}^{n-i+s}\right||L_{2n-i-1}||L_{2n-i}|(s+1)}
\\ = \frac{\left|L_{i+1}\right|\left|L_{i}^{\leq s}\right|- \left|L_{i}\right|\left|L_{i+1}^{\leq s}\right|}{\left|L_{i}^{s}\right||L_{i+1}||L_{i}|(i-2s)}
 =w_{i,s}\numberthis\label{sym1}\end{gather*}
and, similarly,
\begin{equation}\label{sym2} w_{2n-i-1,n-i+s-1}=w_{i,s}'.\end{equation}
Next, we prove an upper bound on $w_{i,s}$ which is tight when $i$ is odd and $s=\frac{i-1}{2}$. An analogous result for the `upper half' of $\{0,1,2\}^n$ follows from \eqref{sym1}.

\begin{prop}
\label{wMaxBoth}
Suppose that $n\geq5$. Then for $1\leq i\leq n-1$ and $0\leq s < \frac{i}{2}$ we have $w_{i,s}\leq \frac{2}{(i+1)\left|L_{i+1}\right|}$.
\end{prop}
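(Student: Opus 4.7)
First I would rewrite the target using the double-counting identity $(i-2s)\lvert L_i^s\rvert=(s+1)\lvert L_{i+1}^{s+1}\rvert$, obtained by counting covering pairs between $L_i^s$ and $L_{i+1}^{s+1}$ in two ways. Substituting into the denominator of \eqref{wExpression}, the target inequality $w_{i,s}\le \tfrac{2}{(i+1)\lvert L_{i+1}\rvert}$ becomes
\[(i+1)\bigl(\lvert L_{i+1}^{\ge s+1}\rvert\,\lvert L_i\rvert-\lvert L_i^{\ge s+1}\rvert\,\lvert L_{i+1}\rvert\bigr)\le 2(s+1)\lvert L_{i+1}^{s+1}\rvert\,\lvert L_i\rvert,\]
which has a more balanced appearance and meshes well with the construction of $\mu$.

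Next I would exploit the ``memoryless'' equation built into the construction of $\mu$. Fixing any $y\in L_{i+1}^{s+1}$ and summing the contributions to $\mathbb P(y\in\mathcal C_\mu)=1/\lvert L_{i+1}\rvert$ over its $s+1$ predecessors in $L_i^s$ and its $i-1-2s$ predecessors in $L_i^{s+1}$ gives
\[(s+1)\,w_{i,s}+(i-1-2s)\,w_{i,s+1}'=\frac{1}{\lvert L_{i+1}\rvert}.\]
For odd $i$ with $s=(i-1)/2$ the second term vanishes, so this collapses to $w_{i,(i-1)/2}=\tfrac{2}{(i+1)\lvert L_{i+1}\rvert}$; this confirms that the bound is tight and provides a base case. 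For all smaller $s$ the upper bound on $w_{i,s}$ follows from the memoryless equation as soon as we have the companion lower bound $w_{i,s+1}'\ge \tfrac{1}{(i+1)\lvert L_{i+1}\rvert}$.

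The companion bound is the crux. I would use the symmetry \eqref{sym2}, namely $w_{i,s+1}'=w_{2n-i-1,\,n-i+s}$, to reduce it to a lower bound on a $w$-weight in the upper half of the poset, and prove that by a weight-redistribution argument in the spirit of Proposition~\ref{wboundPROP}: compare the $\mu$-redistribution at the mirror level with the auxiliary rule that spreads each element's mass uniformly across its upward neighbours, express the discrepancy via the strictly decreasing function $f(s)=\tfrac{s}{n+1-s}+\tfrac{i+1-2s}{n-s}$ from that proof, and read off the lower bound. The boundary case $s=i/2-1$ for even $i$ is not controlled by the memoryless equation alone and I would handle it by hand: substituting $\lvert L_j^r\rvert=\binom{n}{r}\binom{n-r}{j-2r}$ directly, the inequality reduces to an elementary lower bound of the shape $\lvert L_{i+1}\rvert/\lvert L_i\rvert\ge (2n-i)/(2(i+1))$, which follows from the ratio estimates collected in the appendix and is exactly where the hypothesis $n\ge 5$ is used to dispose of small-$n$ anomalies.

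The principal obstacle is the companion lower bound on $w_{i,s+1}'$: naively iterating the memoryless equation alone does not close the induction, since that equation relates $w_{i,s}$ to $w_{i,s+1}'$ but supplies no direct handle on $w_{i,s+1}'$ itself. Invoking the reflection \eqref{sym2} to pass to the mirror level and adapting the redistribution trick of Proposition~\ref{wboundPROP} is what supplies the missing inequality and ties the argument together.
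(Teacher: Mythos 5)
The first part of your plan is sound: the double-counting identity $(i-2s)\lvert L_i^s\rvert=(s+1)\lvert L_{i+1}^{s+1}\rvert$ is correct, the balance relation $(s+1)w_{i,s}+(i-1-2s)w_{i,s+1}'=1/\lvert L_{i+1}\rvert$ is exactly the identity verified in the construction of $\mu$, and your observation that the odd $s=(i-1)/2$ case gives equality and the even $s=i/2-1$ case reduces to $2(i+1)\lvert L_{i+1}\rvert\geq(2n-i)\lvert L_i\rvert$ both match what happens in the paper. But the reduction to the ``companion lower bound'' $w_{i,s+1}'\geq\frac{1}{(i+1)\lvert L_{i+1}\rvert}$ makes no progress: rearranging the same balance relation shows that this lower bound is \emph{exactly equivalent} to the target upper bound $w_{i,s}\leq\frac{2}{(i+1)\lvert L_{i+1}\rvert}$. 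So everything hinges on step three, where you propose to prove the companion bound by passing to the mirror level $j=2n-i-1$ via \eqref{sym2} and running the redistribution argument of Proposition~\ref{wboundPROP}.

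That step contains a genuine gap. Two things go wrong. First, the function $f$ from Proposition~\ref{wboundPROP} is \emph{not} decreasing at levels $j>n$: its increment has numerator $(n-j)(s-n-1)-2s=(j-n)(n+1-s)-2s$, which is positive for small $s$ once $j>n$. So the monotonicity you invoke simply fails at the mirror level. Second, and more decisively, even the qualitative comparison $w_{j,v}'\leq w_{j,v}$ — which, via the balance around $x\in L_j^v$, would be what lets you ``read off'' $w_{j,v}\geq\frac{1}{(n-v)\lvert L_j\rvert}\geq\frac{1}{(2n-j)\lvert L_j\rvert}$ — is false in the upper half. Concretely, take $n=6$, $j=9$, $v=4$ (this arises from $i=2$, $s=0$ in the original proposition): one computes $\lvert L_9\rvert=50$, $\lvert L_{10}\rvert=21$, $\lvert L_9^4\rvert=30$, $\lvert L_{10}^4\rvert=15$, $\lvert L_{10}^5\rvert=6$, $\lvert L_9^3\rvert=20$, and then $w_{9,4}=\frac{1}{105}$ while $w_{9,4}'=\frac{11}{1050}>\frac{1}{105}$. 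So $w_{j,v}'>w_{j,v}$, the redistribution comparison runs the wrong way, and the chain of inequalities you intended to ``read off'' breaks. (The desired bound $w_{9,4}\geq\frac{1}{3\lvert L_9\rvert}=\frac{1}{150}$ is still true here; it is your proposed route to it that fails.) The paper avoids all of this by attacking the equivalent inequality \eqref{equiv} directly with a two-directional induction pivoting on the critical index $s_0$ defined in \eqref{j0def} — increasing induction on $s$ for $s\leq s_0$, decreasing induction on $\lfloor i/2\rfloor-s$ for $s>s_0$ — exploiting that the left side of \eqref{j0def} is increasing in $s$ while the right side is decreasing. Your proposal has no substitute for that mechanism.
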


\begin{proof}
If there is an integer $s$ such that $0\leq s<\frac{i}{2}$ and
\begin{equation}
\begin{gathered}
\label{j0def}
2\left|L_i\right|s(s+1)(n-i+s) (n-i+s+1) \\
\leq(i-2s-1)(i-2s)(i-2s+1)\left((i+1)|L_{i+1}| - (n-i+s +1)|L_i|\right),
\end{gathered}
\end{equation}
then we let $s_0$ be the largest such integer. Otherwise, set $s_0:=0$. Note that both sides of \eqref{j0def} are non-negative for all $s$ such that $0\leq s< \frac{i}{2}$, where Lemma~\ref{ratio1} is used to verify that the right side is non-negative. If $s_0\geq1$, then the inequality \eqref{j0def} must hold for all $0\leq i\leq n-1$ and $s$ such that $0\leq s\leq s_0$ and must fail to hold for all $s$ such that $s_0+1\leq s< \frac{i}{2}$; this is because, for fixed $0\leq i\leq n-1$ and $s$ in the range $0\leq s<\frac{i}{2}$, the left side of \eqref{j0def} is increasing in $s$ and the right side is  decreasing in $s$. 

By \eqref{wExpression}, the inequality $w_{i,s}\leq \frac{2}{(i+1)\left|L_{i+1}\right|}$ can be rewritten as follows:
\[(i+1)\left(\left|L_i\right|\left|L_{i+1}^{\geq s+1}\right| - \left|L_{i+1}\right|\left|L_i^{\geq s+1}\right|\right) \leq 2\left|L_i^s\right|\left|L_i\right|(i-2s)\]
or, equivalently,
\begin{equation}\label{equiv}\left|L_i\right|\left((i+1)\left|L_{i+1}^{\geq s+1}\right| - 2(i-2s)\left|L_i^s\right|\right)\leq \left|L_{i+1}\right|(i+1)\left|L_i^{\geq s+1}\right|.\end{equation}
Our goal is to prove that \eqref{equiv} holds for $0\leq s< \frac{i}{2}$. The argument will either use induction on $s$ or  induction on $\left\lfloor\frac{i}{2}\right\rfloor - s$ depending on whether $s\leq s_0$ or $s\geq s_0+1$. Before proving the case $s\leq s_0$, though, we need to verify the `base case' $s=0$.

\begin{case2}
\label{j=0}
$s=0$.
\end{case2}
In this case, \eqref{equiv} becomes
\[\left|L_i\right|\left((i+1)\left|L_{i+1}\right| - (i+1)\left|L_{i+1}^0\right| - 2i\left|L_i^0\right|\right)\leq \left|L_{i+1}\right|(i+1)\left(\left|L_i\right| - \left|L_i^0\right|\right).\]
Since $(i+1)\left|L_{i+1}^0\right|= (i+1)\binom{n}{i+1} = (n-i)\binom{n}{i}=(n-i)\left|L_i^0\right|$, this simplifies to
\begin{equation}\label{notTooBig}(i+1)\left|L_{i+1}\right|\leq (n+i)\left|L_i\right|.\end{equation}
To prove \eqref{notTooBig}, we observe that each element of $L_i^r$ is comparable to $n-r$ elements of $L_{i+1}$ and each element of $L_{i+1}^r$ comparable to $i+1-r$ elements of $L_i$. Therefore, 
\[\sum_{r=0}^{\left\lfloor\frac{i+1}{2}\right\rfloor}(i+1-r)\left|L_{i+1}^r\right| = \sum_{r=0}^{\left\lfloor\frac{i}{2}\right\rfloor}(n-r)\left|L_{i}^r\right|.\]
This equality can be rewritten as follows:
\[(i+1)\left|L_{i+1}\right| - \sum_{r=0}^{\left\lfloor\frac{i+1}{2}\right\rfloor}r\left|L_{i+1}^r\right| = (n+i)\left|L_i\right| - \sum_{r=0}^{\left\lfloor\frac{i}{2}\right\rfloor}(i+r)\left|L_{i}^r\right|.\]
So, after a change of index, we see that \eqref{notTooBig} is equivalent to
\[\sum_{r=0}^{\left\lfloor\frac{i}{2}\right\rfloor}(i+r)\left|L_i^r\right| \geq \sum_{r=0}^{\left\lfloor\frac{i-1}{2}\right\rfloor}(r+1)\left|L_{i+1}^{r+1}\right|.\]
For $0\leq r\leq \left \lfloor \frac{i-1}{2}\right\rfloor$ we have
\begin{align*}
(i+r)\left|L_i^r\right| &= (i+r)\binom{n}{i-r}\binom{i-r}{r} = \frac{(i+r) \left|L_{i+1}^{r+1}\right|\binom{i-r}{r}}{\binom{i-r}{r+1}}\\
&=\frac{(i+r)(r+1)}{i-2r}\left|L_{i+1}^{r+1}\right|\geq (r+1)\left|L_{i+1}^{r+1}\right|\end{align*}
and so \eqref{notTooBig} holds. This completes the proof in this case.

\begin{case2}
$1\leq s\leq s_0$. 
\end{case2}

We can assume, by induction and the previous case, that \eqref{equiv} is true if $s$ is replaced by $s-1$. That is, we assume that the following inequality is true:
\[\left|L_i\right|\left((i+1)\left|L_{i+1}^{\geq s}\right| - 2(i-2s+2)\left|L_i^{s-1}\right|\right)\leq \left|L_{i+1}\right|(i+1)\left|L_i^{\geq s}\right|.\]
Subtracting this inequality from \eqref{equiv}, we see that it suffices to prove that
\[\left|L_i\right|\left((i+1)\left|L_{i+1}^{s+1}\right| - 2(i-2s)\left|L_i^s\right| + 2(i-2s+2)\left|L_i^{s-1}\right|\right) \]
\[\leq \left|L_{i+1}\right|(i+1)\left|L_i^{s+1}\right|.\]
After substituting the values of $\left|L_{i+1}^{s+1}\right|$, $\left|L_i^s\right|$, $\left|L_i^{s-1}\right|$ and $\left|L_i^{s+1}\right|$ into the above inequality, it becomes
\[\left|L_i\right|\left((i+1)\binom{n}{i-s}\binom{i-s}{s+1} - 2(i-2s)\binom{n}{i-s}\binom{i-s}{s} + 2(i-2s+2)\binom{n}{i-s+1}\binom{i-s+1}{s-1}\right)\]
\[\leq \left|L_{i+1}\right|(i+1)\binom{n}{i-s-1}\binom{i-s-1}{s+1}.\]
By simplifying this expression, one can show that it is in fact equivalent to \eqref{j0def}, which holds for $s$ because $s\leq s_0$. This completes the proof in this case. 

\begin{case2}
$s_0+1\leq s< \frac{i}{2}$. 
\end{case2}

In the case that $i$ is odd and $s=\frac{i-1}{2}$ we have $w_{i,s}=\frac{2}{(i+1)\left|L_{i+1}\right|}$ by definition and so the result holds with equality. If $i$ is even and $s=\frac{i-2}{2}$, then $w_{i,s} = \frac{2}{i|L_{i+1}|} - \frac{4}{(2n-i)|L_{i}|}$ by \eqref{wExpression}. In this case, the bound $w_{i,s}\leq \frac{2}{(i+1)\left|L_{i+1}\right|}$ reduces to the following:
\[(2n-i)|L_i|\leq 2(i+1)|L_{i+1}|.\]
Since $1\leq i\leq n-1$, this inequality holds by Lemmas~\ref{ratio1},~\ref{ration-2} and~\ref{ration-1}. 

So, we are done unless $s_0< s < \frac{i}{2}-1$. We can assume, by induction on $\left\lfloor\frac{i}{2}\right\rfloor-s$, that \eqref{equiv} is true if $s$ is replaced by $s+1$. That is, we assume that the following inequality holds:
\[\left|L_i\right|\left((i+1)\left|L_{i+1}^{\geq s+2}\right| - 2(i-2s-2)\left|L_i^{s+1}\right|\right)\leq \left|L_{i+1}\right|(i+1)\left|L_i^{\geq s+2}\right|.\]
Similar to the proof of the previous case, we can derive \eqref{equiv} from the above inequality provided that we can prove the following: 
\[\left|L_i\right|\left((i+1)\left|L_{i+1}^{s+2}\right| - 2(i-2s-2)\left|L_i^{s+1}\right| + 2(i-2s)\left|L_i^s\right|\right)\]
\[> \left|L_{i+1}\right|(i+1)\left|L_i^{s+2}\right|.\]
Simplifying this expression in a similar fashion to the proof of the previous case, we see that it is equivalent to the following inequality:
\[2\left|L_i\right|(s+1)(s+2)(n-i+s+1)(n-i+s+2)\]
\[> (i-2s-3)(i-2s-2)(i-2s-1)\left((i+1)\left|L_{i+1}\right| - (n-i+s+2)\left|L_i\right|\right).\]
This is precisely the negation of \eqref{j0def} with $s$ is replaced by $s+1$. This inequality holds because $s>s_0$ and $s_0$ was chosen to be the largest index satisfying \eqref{j0def}. This completes the proof of the proposition.
\end{proof}

We derive the following corollary.

\begin{cor}
\label{OneStep}
For $n\geq5$ and $0\leq i\leq 2n-1$, if $x\in L_i$ and $y\in L_{i+1}$ such that $x<y$, then
\[\mathbb{P}\left(y\in \mathcal{C}_\mu\mid x\in\mathcal{C}_\mu\right) \leq 
\begin{cases}	2|L_i|/(i+1)|L_{i+1}| 	& \text{if }i\leq n-1 \\
							2/(2n-i)					& \text{otherwise.}\end{cases}\]
\end{cor}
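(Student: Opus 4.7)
By the construction of $\mu$, when $x\in L_i^s$ the conditional probability in question equals $|L_i|w_{i,s}$ if $y\in L_{i+1}^{s+1}$ and $|L_i|w_{i,s}'$ if $y\in L_{i+1}^s$. The plan is therefore to translate the weight bounds supplied by Propositions~\ref{wboundPROP} and~\ref{wMaxBoth} into bounds on this probability, splitting according to whether $i$ lies in the lower half ($i\leq n-1$) or the upper half ($i\geq n$) of the poset, and invoking the symmetries \eqref{sym1} and \eqref{sym2} in the second case.

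For $0\leq i\leq n-1$ and $0\leq s<i/2$, Proposition~\ref{wMaxBoth} gives $w_{i,s}\leq \tfrac{2}{(i+1)|L_{i+1}|}$, while Proposition~\ref{wboundPROP} gives $w_{i,s}'\leq w_{i,s}$; after multiplying through by $|L_i|$ both possibilities satisfy the claimed bound. The one remaining situation is $s=i/2$ with $i$ even, where only $w_{i,i/2}'$ arises (since $L_{i+1}^{i/2+1}$ is unreachable from $L_i^{i/2}$). A direct evaluation of \eqref{w'Expression} in this degenerate case simplifies to $w_{i,i/2}'=\tfrac{1}{|L_i|(n-i/2)}$, so that $|L_i|w_{i,i/2}'=\tfrac{2}{2n-i}$, and this is then compared to $\tfrac{2|L_i|}{(i+1)|L_{i+1}|}$ using the appendix estimates that control $(i+1)|L_{i+1}|$ against $(2n-i)|L_i|$.

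For $n\leq i\leq 2n-1$, I would set $j:=2n-i-1\in\{0,\ldots,n-1\}$ and use \eqref{sym1} and \eqref{sym2} to rewrite $w_{i,s}$ and $w_{i,s}'$ as weights at level $j$. After reindexing, these identities give $w_{i,s}'=w_{j,n-i+s-1+1}'^{\text{(swap)}}$ and the companion identification $w_{i,s}=w_{j,\cdot}^{(\prime)}$; in each case the symmetric level $j$ lies in the range $\{0,\ldots,n-1\}$ to which Propositions~\ref{wboundPROP} and~\ref{wMaxBoth} apply. Since $|L_{j+1}|=|L_{2n-i}|=|L_i|$, the resulting bound is $w_{i,s},w_{i,s}'\leq \tfrac{2}{(j+1)|L_{j+1}|}=\tfrac{2}{(2n-i)|L_i|}$, which upon multiplying by $|L_i|$ yields the desired $\tfrac{2}{2n-i}$. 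The maximal value of $s$ at level $i$ falls outside the strict hypothesis $s<i/2$ of the supporting propositions and is handled by the same direct computation used in the lower half (now invoked at level $j$), again augmented by the appendix lemmas.

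The main obstacle is precisely the boundary value $s=\lfloor i/2\rfloor$: neither Proposition~\ref{wboundPROP} nor Proposition~\ref{wMaxBoth} is formulated to cover it, so this case must be extracted directly from the defining formulas \eqref{wExpression} and \eqref{w'Expression}, and the comparison between the two forms of the bound relies on the ratio estimates for $|L_{i+1}|/|L_i|$ that are collected in the appendix. Beyond this point, the proof is a routine bookkeeping exercise combining the weight bounds with the level-symmetry of $\{0,1,2\}^n$.
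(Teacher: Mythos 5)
Your proposal is correct and takes essentially the same route as the paper: reduce to bounding $|L_i|w_{i,s}$ and $|L_i|w_{i,s}'$, apply Propositions~\ref{wboundPROP} and~\ref{wMaxBoth} for $s<i/2$, handle the degenerate case $s=\lfloor i/2\rfloor$ (even $i$) by direct evaluation of \eqref{w'Expression} plus the appendix ratio lemmas, and use the symmetries \eqref{sym1}--\eqref{sym2} to transfer the bounds from levels $i\leq n-1$ to levels $i\geq n$. The paper phrases the upper-half reduction via the Bayes-type identity $\mathbb{P}(y\in\mathcal{C}_\mu\mid x\in\mathcal{C}_\mu)=\mathbb{P}(x\in\mathcal{C}_\mu\mid y\in\mathcal{C}_\mu)\cdot|L_i|/|L_{i+1}|$ rather than reindexing the weights directly, but this is the same calculation organised slightly differently.
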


\begin{proof}
Let $\max\{0,i-n\}\leq s\leq i/2$ so that $x\in L_i^s$. By construction, $\mathbb{P}\left(y\in \mathcal{C}_\mu\mid x\in\mathcal{C}_\mu\right)$ is either equal to $w_{i,s}|L_i|$ or $w_{i,s}'|L_i|$. 

Suppose first that $i\leq n-1$. If $s<i/2$, then by Propositions~\ref{wboundPROP} and~\ref{wMaxBoth}, we have that $w_{i,s}$ and $w_{i,s}'$ are both bounded above by $\frac{2}{(i+1)|L_{i+1}|}$ and we are done. In the case that $i$ is even and $s=i/2$, we have that $w_{i,i/2}=0$ and $w_{i,i/2}' = \frac{2}{2n-i}$ which is bounded above by $\frac{2}{(i+1)|L_{i+1}|}$ by Lemmas~\ref{ratio1},~\ref{ration-2} and~\ref{ration-1}. This completes the proof for $i\leq n-1$. The case $i\geq n$ follows from the case $i\leq n-1$ via equations \eqref{sym1} and \eqref{sym2} and the fact that $\mathbb{P}\left(y\in \mathcal{C}_\mu\mid x\in\mathcal{C}_\mu\right) = \mathbb{P}\left(x\in \mathcal{C}_\mu\mid y\in\mathcal{C}_\mu\right)|L_{i+1}|/|L_i|$. 
\end{proof}

We are now ready to prove Lemma~\ref{conditionalProb}. The proof is divided into three cases depending on whether
\begin{itemize}
\item $|j-i|$ is `small,' 
\item $|j-i|$ is `large' and $i+j=2n$, or 
\item $|j-i|$ is `large' and $i+j\neq 2n$.
\end{itemize}
The first case is done by summing over all chains containing $x$ and $y$ and applying \eqref{memoryless} and Corollary~\ref{OneStep} to each of them. In the second case, we assume that $(x,y)$ is chosen from $L_i\times L_j$ to maximise $\mathbb{P}\left(y\in\mathcal{C}_\mu\mid x\in\mathcal{C}_\mu\right)$ and obtain a lower bound on the number of $y'\in L_j$ comparable to $x$ such that $\mathbb{P}\left(y'\in\mathcal{C}_\mu\mid x\in\mathcal{C}_\mu\right)=\mathbb{P}\left(y\in\mathcal{C}_\mu\mid x\in\mathcal{C}_\mu\right)$. Since the sum of $\mathbb{P}\left(y'\in\mathcal{C}_\mu\mid x\in\mathcal{C}_\mu\right)$ over all such $y'$ is at most one, this lower bound on the number of such $y'$ gives us an upper bound on $\mathbb{P}\left(y\in\mathcal{C}_\mu\mid x\in\mathcal{C}_\mu\right)$. The third case is done by a simple inductive trick which uses the first two cases.

\begin{proof}[Proof of Lemma~\ref{conditionalProb}]
Let $k\geq 1$ be a fixed integer and let $n$ be sufficiently large with respect to $k$. Let $i,j$ be integers satisfying the hypotheses of the lemma. Note that, by the symmetry of $\mu$ around level $n$ (i.e. by equations \eqref{sym1} and \eqref{sym2}), we can assume that $i<n$. The assumption $|j-n|\leq |i-n|$ can therefore be rewritten as $i<j$ and $i+j\leq 2n$. 

\begin{case4}
\label{smallGap}
$k\leq j-i\leq 11n/10$.
\end{case4}

It is clear that $\mathbb{P}\left(y\in \mathcal{C}_\mu\mid x\in\mathcal{C}_\mu\right)$ is equal to the sum of $\mathbb{P}\left(z_{i+1},\dots,z_{j}\in \mathcal{C}_\mu\mid z_i\in\mathcal{C}_\mu\right)$ over all sequences $z_i<z_{i+1}<\dots<z_{j}$ such that $z_i=x$ and $z_{j}=y$. The number of such sequences is clearly bounded above by $(j-i)!$ and, for every such sequence, we have
\[\mathbb{P}\left(z_{i+1},\dots,z_{j}\in \mathcal{C}_\mu\mid z_i\in\mathcal{C}_\mu\right) = \prod_{r=i}^{j-1}\mathbb{P}\left(z_{r+1}\in \mathcal{C}_\mu\mid z_r\in\mathcal{C}_\mu\right)\]
by \eqref{memoryless}. For $i\leq r\leq n-1$, we have that $\mathbb{P}\left(z_{r+1}\in \mathcal{C}_\mu\mid z_r\in\mathcal{C}_\mu\right) \leq \frac{2|L_i|}{(i+1)|L_{i+1}|}\leq \frac{2}{n}$ by Corollary~\ref{OneStep} and Lemmas~\ref{ratio1},~\ref{ration-2} and~\ref{ration-1} and for $n\leq r\leq j-1$ we have $\mathbb{P}\left(z_{r+1}\in \mathcal{C}_\mu\mid z_r\in\mathcal{C}_\mu\right) \leq \frac{2}{2n-r}$ by Corollary~\ref{OneStep}. Since $i+j\leq 2n$, we have that $i\leq n-\frac{j-i}{2}$ and so we get 
\begin{align*}
\mathbb{P}\left(z_{i+1},\dots,z_{j}\in \mathcal{C}_\mu\mid z_i\in\mathcal{C}_\mu\right)& \leq \left(\frac{2}{n}\right)^{\left\lceil (j-i)/2\right\rceil}\frac{2^{\left\lfloor (j-i)/2\right\rfloor}}{n(n-1)\cdots (n-\left\lfloor (j-i)/2\right\rfloor + 1)}\\
& = \frac{2^{j-i} (n-\left\lfloor(j-i)/2\right\rfloor)!}{n^{\left\lceil (j-i)/2\right\rceil}n!}.\end{align*}
As mentioned earlier, the number of such sequences is at most $(j-i)!$ and so we obtain
\begin{equation}\label{j-iBound}\mathbb{P}\left(y\in \mathcal{C}_\mu\mid x\in\mathcal{C}_\mu\right) \leq \frac{2^{j-i} (j-i)!(n-\left\lfloor(j-i)/2\right\rfloor)!}{n^{\left\lceil (j-i)/2\right\rceil}n!}.\end{equation}
In the case $j-i=k$, the right side is at most $2^{k}\binom{n}{k}^{-1}$ which, by \eqref{ratioAsymp} and the fact that $\frac{\ell_2(n)}{\ell_1(n)} - 1 = \frac{n-1}{2}$, is less than $\left(\frac{\ell_{3k-1}(n)}{\ell_{2k-1}(n)} - 1\right)^{-1}$ for $n_0(k)$ sufficiently large and so we are done. Also, the right side is decreasing in $j-i$ for $k\leq j-i\leq n/10$ and so we are done for $j-i$ in this range as well. 

So, we assume that $n/10 < j-i \leq 11n/10$. If we define $c:=\frac{j-i}{n}$, then \eqref{j-iBound} can be rewritten as
\[\mathbb{P}\left(y\in \mathcal{C}_\mu\mid x\in\mathcal{C}_\mu\right) \leq \frac{2^{cn} (cn)!(n-\left\lfloor cn/2\right\rfloor)!}{n^{\left\lceil cn/2\right\rceil}n!}.\] 
Applying Stirling's Approximation, we see that the right side is bounded above by
\[O\left(\frac{\sqrt{n} 2^{cn}\left(\frac{cn}{e}\right)^{cn}\left(\frac{n(1-c/2)}{e}\right)^{n(1-c/2)}}{n^{cn/2} \left(\frac{n}{e}\right)^n}\right)= O\left(\sqrt{n}\left(\frac{\left(2c\right)^{c}\left(1-c/2\right)}{e^{c/2}(1-c/2)^{c/2}}\right)^n\right).\]
Notice that $\frac{\left(2c\right)^{c}\left(1-c/2\right)}{e^{c/2}(1-c/2)^{c/2}}<0.99$ for $1/10 < c \leq 11/10$ and so the above expression decreases exponentially with $n$ and therefore is less than $\left(\frac{\ell_{3k-1}(n)}{\ell_{2k-1}(n)} - 1\right)^{-1}$ for $n_0(k)$ sufficiently large. This completes the proof in this case.

\begin{case4}
\label{bigSymmetricGap}
$i+j=2n$ and $i < 9n/20$.
\end{case4}

In this case, we suppose further that the pair $(x,y)$ is chosen from $L_i\times L_{2n-i}$ so that $\mathbb{P}\left(y\in\mathcal{C}_\mu\mid x\in\mathcal{C}_\mu\right)$ is maximised. We prove the following claim.

\begin{claim}
\label{1To2}
If there exist a coordinate $\alpha\in[n]$ such that $x_\alpha=0$ and $y_\alpha=2$, then for every coordinate $\beta\in[n]$ with $x_\beta=1$ we have $y_\beta=2$. 
\end{claim}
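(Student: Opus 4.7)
The plan is a swap-and-contradict argument. Suppose for contradiction that such a $\beta$ exists with $x_\beta=1$ and $y_\beta\neq 2$; since $x<y$ forces $y_\beta\in\{1,2\}$, in fact $y_\beta=1$. Define $y'$ to be the vector obtained from $y$ by setting the $\alpha$-coordinate to $1$ and the $\beta$-coordinate to $2$ and leaving all other coordinates unchanged. Then $y'\in L_{2n-i}$ and $x<y'$, so $(x,y')$ is an admissible alternative pair, and it suffices to prove
\[\mathbb{P}\left(y'\in\mathcal{C}_\mu\mid x\in\mathcal{C}_\mu\right)>\mathbb{P}\left(y\in\mathcal{C}_\mu\mid x\in\mathcal{C}_\mu\right),\]
which will contradict the maximality of $(x,y)$ and finish the proof.

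The key observation is that, by the construction of $\mu$ and \eqref{memoryless}, the probability of any specific increasing chain $x=z_0<z_1<\cdots<z_{2n-2i}$ conditioned on $x\in\mathcal{C}_\mu$ factors as a product of per-step factors of the form $|L_r|w_{r,s'}$ or $|L_r|w_{r,s'}'$, depending only on the \emph{type} of the $k$th move (promotion of a $1$ to a $2$ versus promotion of a $0$ to a $1$) and on the pair $(r,s')$ (rank and $2$-count) just before it, not on which coordinate is being modified. Summing this product over all chains terminating at $y$ (respectively $y'$) produces the two conditional probabilities to be compared.

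The main step is to construct a probability-preserving injection $\Phi$ from chains $C\colon x\to y$ to chains $\Phi(C)\colon x\to y'$. Every such $C$ performs exactly one $0$-to-$1$ move on coordinate $\alpha$ at some step $k_1$, exactly one $1$-to-$2$ move on $\alpha$ at some later step $k_2$, and no moves on coordinate $\beta$ whatsoever. Define $\Phi(C)$ by changing the coordinate of the step-$k_2$ move from $\alpha$ to $\beta$, leaving every other move intact; since $\beta$ still equals $1$ just before step $k_2$, the replacement is legal, $\Phi(C)$ terminates at $y'$, and its sequence of states $(r,s')$ coincides step-by-step with that of $C$ (both swapped moves are of the same type, so the $2$-count evolves identically). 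Hence $\mathbb{P}(\Phi(C))=\mathbb{P}(C)$, and $\Phi$ is injective because the swap is reversible.

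To upgrade to a strict inequality, one exhibits a single chain $C^*\colon x\to y'$ of positive probability that lies outside the image of $\Phi$. A natural choice is the chain that first promotes $\beta$ from $1$ to $2$ (legal because $x_\beta=1$), then promotes $\alpha$ from $0$ to $1$, and finally realizes the remaining differences between $x$ and $y'$ in any admissible order (for instance, all remaining $0$-to-$1$ promotions before all remaining $1$-to-$2$ promotions). Every chain in the image of $\Phi$ performs the $0$-to-$1$ move on $\alpha$ before the $1$-to-$2$ move on $\beta$, so $C^*$ is not in the image; its probability is strictly positive because each weight $w_{r,s'}$ or $w_{r,s'}'$ encountered lies in its nonzero range. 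The trickiest bookkeeping, which I expect to be the main obstacle, is the verification that $\Phi$ is state-preserving at every intermediate level; this comes down to the fact that interchanging \emph{which} coordinate is being promoted at step $k_2$, while keeping the move type fixed, never alters the $2$-count at any subsequent level.
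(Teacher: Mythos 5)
Your proof is correct and follows essentially the same strategy as the paper's: the paper defines the coordinate transposition $\tau_{\alpha,\beta}$, sets $y'=\tau_{\alpha,\beta}(y)$, and builds exactly the probability-preserving injection you call $\Phi$ (namely, applying $\tau_{\alpha,\beta}$ to the tail of the chain from the first step where the $\alpha$-coordinate reaches $2$, which is the same as re-routing that $1\to2$ move from $\alpha$ to $\beta$), and it exhibits the same kind of non-image chain for strictness — one whose level-$(i+1)$ element already has $\beta$-coordinate $2$. Your extra remark about positivity of the witness chain's probability is a point the paper leaves implicit as well.
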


\begin{proof}
Suppose to the contrary that there is a coordinate $\alpha\in[n]$ such that $x_\alpha=0$ and $y_\alpha=2$ and a coordinate $\beta\in[n]$ with $x_\beta=y_\beta=1$. Let $\tau_{\alpha,\beta}:\{0,1,2\}^n\to\{0,1,2\}^n$ be the function which exchanges the $\alpha$th coordinate with the $\beta$th coordinate. Note that $x<\tau_{\alpha,\beta}(y)$. Our goal is to show that 
\[\mathbb{P}\left(\tau_{\alpha,\beta}(y)\in\mathcal{C}_\mu\mid x\in\mathcal{C}_\mu\right)> \mathbb{P}\left(y\in\mathcal{C}_\mu\mid x\in\mathcal{C}_\mu\right)\]
which will contradict our choice of $x$ and $y$.

Given a sequence $Z=(z_i,\dots,z_{2n-i})$ of elements of $\{0,1,2\}^n$ with $z_i<\dots<z_{2n-i}$, $z_i=x$ and $z_{2n-i}=y$ define $\ell(Z)$ to be the smallest integer $\ell$ such that the $\alpha$th coordinate of $z_\ell$ is a two. For every such sequence, define another sequence $\pi(Z)$ as follows:
\[\pi(Z):=\left(z_i,\dots,z_{\ell(Z)-1},\tau_{\alpha,\beta}\left(z_{\ell(Z)}\right),\dots,\tau_{\alpha,\beta}\left(z_{2n-i}\right)\right).\]
Notice that $\pi(Z)$ is a chain starting with $x$ and ending at $\pi_{\alpha,\beta}(y)$. It is not hard to see that the function $\pi$ is injective. Now, the key observation is that, by construction of $\mathcal{C}_\mu$, for any such sequence $Z$, we have
\[\mathbb{P}\left(z_{i+1},\dots,z_{2n-i}\in \mathcal{C}_\mu\mid z_i\in\mathcal{C}_\mu\right)\]
\[ =\mathbb{P}\left(z_{i+1},\dots,z_{\ell(Z)-1},\tau_{\alpha,\beta}\left(z_{\ell(Z)}\right),\dots,\tau_{\alpha,\beta}\left(z_{2n-i}\right)\in \mathcal{C}_\mu\mid z_i\in\mathcal{C}_\mu\right).\]
This proves $\mathbb{P}\left(\tau_{\alpha,\beta}(y)\in\mathcal{C}_\mu\mid x\in\mathcal{C}_\mu\right)\geq \mathbb{P}\left(y\in\mathcal{C}_\mu\mid x\in\mathcal{C}_\mu\right)$ since $\mathbb{P}\left(y\in \mathcal{C}_\mu\mid x\in\mathcal{C}_\mu\right)$ is equal to the sum over all such sequences of $\mathbb{P}\left(z_{i+1},\dots,z_{2n-i}\in \mathcal{C}_\mu\mid z_i\in\mathcal{C}_\mu\right)$. In order to prove the strict inequality, all that we need is a single chain containing $x$ and $\tau_{\alpha,\beta}(y)$ which is not a subsequence of one of the chains $\pi(Z)$ for $Z$ as above. Any chain of the form $x<z<\tau_{\alpha,\beta}(y)$ with $z_\beta=2$ has this property. This completes the proof of the claim.
\end{proof}

We now prove that, in fact, there must be at least $n/10$ coordinates $\alpha\in[n]$ such that $x_\alpha=0$ and $y_\alpha=2$. If not, then we must have that the rank of $y$ is at most $2i + (n-i) + n/10 = 11n/10+i$. However, by assumption of this case, the rank of $y$ is precisely $2n-i$. Putting this together, we get that $2n-i\leq 11n/10 +i$ which implies that $i\geq 9n/20$, a contradiction. Thus, by Claim~\ref{1To2} we have that $y_\beta=2$ for every coordinate $\beta\in[n]$ with $x_\beta\geq 1$. 

By the result of the previous paragraph and the symmetry of the distribution $\mu$, we see that $\mathbb{P}\left(y'\in \mathcal{C}_\mu\mid x\in \mathcal{C}_\mu\right)$ is equal to $\mathbb{P}\left(y\in \mathcal{C}_\mu\mid x\in \mathcal{C}_\mu\right)$ for every $y'\in L_{2n-i}$ such that
\begin{itemize}
\item $y'_\beta = 2$ for every  coordinate $\beta\in[n]$ with $x_\beta\geq1$, and
\item $y'$ has the same number of coordinates in $\{0,1\}$ as $y$. 
\end{itemize}
By the hypotheses of the lemma and the fact that the rank of $x$ plus the rank of $y$ is $2n$, we must have that the rank of $y$ is at most $2n-2k$. This implies that $y$ must have at least $k$ coordinates in $\{0,1\}$. Also, recall that we have proved above that there are at least $n/10\gg k$ coordinates such that $\alpha\in[n]$ such that $x_\alpha=0$ and $y_\alpha=2$. Since $i<9n/20$, we have that $x$ has at least $11n/20$ coordinates which are equal to zero. Putting all of this together, we get that, for $n_0(k)$ sufficiently large, the number of such $y'$ is at least
\[\binom{\left\lceil 11n/20\right\rceil}{k} > \frac{\ell_{3k-1}(n)}{\ell_{2k-1}(n)} - 1\]
by \eqref{ratioAsymp}. Since $\mathbb{P}\left(y'\in \mathcal{C}_\mu\mid x\in \mathcal{C}_\mu\right)$ is the same for all such $y'$ and the sum of this quantity over all $y'$ is at most one, we must have that $\mathbb{P}\left(y\in \mathcal{C}_\mu\mid x\in \mathcal{C}_\mu\right)\leq \left(\frac{\ell_{3k-1}(n)}{\ell_{2k-1}(n)} - 1\right)^{-1}$. This completes the proof in this case.

\begin{case4}
$j-i> 11n/10$ and $i+j<2n$.
\end{case4}

In this case, we proceed by induction on $2n-i-j$. The basis of our induction will be the case $2n-i-j=0$, which is covered by Case~\ref{bigSymmetricGap}. We can write $\mathbb{P}\left(y\in \mathcal{C}_\mu\mid x\in\mathcal{C}_\mu\right)$ as
\[\sum_{\substack{z\in L_{i+1} \\ x<z<y}}\mathbb{P}\left(y,z\in \mathcal{C}_\mu\mid x\in\mathcal{C}_\mu\right)\]
\[=\sum_{\substack{z\in L_{i+1} \\ x<z<y}}\mathbb{P}\left(z\in \mathcal{C}_\mu\mid x\in\mathcal{C}_\mu\right)\mathbb{P}\left(y\in \mathcal{C}_\mu\mid z\in\mathcal{C}_\mu\right)\]
by \eqref{memoryless}. Since  $\sum_{\substack{z\in L_{i+1} \\ x<z<y}}\mathbb{P}\left(z\in \mathcal{C}_\mu\mid x\in\mathcal{C}_\mu\right)\leq 1$, we see that it suffices to show that 
\begin{equation}\label{zBound}\mathbb{P}\left(y\in \mathcal{C}_\mu\mid z\in\mathcal{C}_\mu\right)\leq\left(\frac{\ell_{3k-1}(n)}{\ell_{2k-1}(n)} - 1\right)^{-1}\end{equation}
for every $z\in L_{i+1}$ such that $x<z<y$. So, let $z$ be any such element. By assumption, we have that $i+1<j$ and $(i+1)+j\leq 2n$ and so the rank of $y$ is at least as close to $n$ as the rank of $z$ is. If we have $j-(i+1) \leq 11n/10$, then \eqref{zBound} holds by the argument given in Case~\ref{smallGap} and we are done. So, we assume that $j-(i+1) > 11n/10$. If we have $(i+1)+j=2n$, then \eqref{zBound} holds by the argument given in Case~\ref{bigSymmetricGap}. Thus, we have that $j-(i+1)>11n/10$ and $(i+1)+j<2n$ and we now get that \eqref{zBound} holds by the inductive hypothesis. This completes the proof of the lemma.
\end{proof}

\section{Containers for Antichains}
\label{containerSec}

In this section, we generalise a container-type lemma for the boolean lattice proved by Balogh, Mycroft and Treglown~\cite{RandomSperner} to general posets. The main idea in the proof originated in the work of Kleitman and Winston~\cite{KleitmanWinston} and has now been used many times; see, e.g.,~\cite{RandomSperner,BaloghWagnerBoolean,GraphCont}. In essence, what this lemma says is that if $P$ satisfies a certain supersaturation bound, then there is a collection of `not very large' subsets of $P$ indexed by `very small' subsets of $P$ with the property that every antichain in $P$ is contained in at least one set from the collection. The elements of such a collection are referred to as \emph{containers}. The fact that the containers are indexed by small subsets of $P$ is typically used to argue that the total number of containers is very small compared to the total number of antichains.

\begin{defn}
Given a set $X$ and an integer $j$, we let $\binom{X}{\leq j}$ denote the set of all subsets of $m$ of cardinality at most $j$. Also, for $n\geq j$, define $\binom{n}{\leq j}:=\sum_{r=0}^j\binom{n}{r}$. 
\end{defn}

\begin{lem}
\label{generalCont}
Let $d$ and $m$ be positive integers and let $P$ be a poset such that $|P|> m$ and every subset $S$ of $P$ of cardinality greater than $m$ contains at least $|S|d$ comparable pairs. Then there exists a function
\[f:\binom{P}{\leq |P|/(2d+1)}\to\binom{P}{\leq m}\]
such that for every antichain $I\subseteq P$ there exists a subset $T$ of $I$ of cardinality at most $|P|/(2d+1)$ with $T\cap f(T)=\emptyset$ and $I\subseteq T\cup f(T)$. 
\end{lem}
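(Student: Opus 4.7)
The plan is to follow the Kleitman--Winston scheme: we construct a deterministic algorithm that, given an antichain $I\subseteq P$, outputs a pair $(T,C)$ satisfying $T\subseteq I$, $T\cap C=\emptyset$, $I\subseteq T\cup C$ and $|C|\leq m$, in such a way that $C$ is in fact a function of $T$ alone. Setting $f(T):=C$ for every $T$ realised by the algorithm, and $f(T):=\emptyset$ otherwise, then yields the lemma, provided we can also bound $|T|\leq |P|/(2d+1)$.

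Fix an arbitrary linear order on $P$. The algorithm maintains a current set $U$ (initially $P$) and a set $T$ (initially $\emptyset$), and at each step selects the element $v\in U$ comparable to the largest number of other elements of $U$, with ties broken by the fixed linear order. While $|U|>m$, the algorithm does the following: if $v\in I$, it adds $v$ to $T$ and deletes from $U$ both $v$ and every element of $U$ comparable to $v$; if $v\notin I$, it deletes only $v$ from $U$. When $|U|\leq m$, the algorithm halts and outputs $T$ together with $C:=U$. By construction $T\cap C=\emptyset$. Moreover, since $I$ is an antichain, whenever $v\in I$ is processed the other elements of $U$ comparable to $v$ all lie outside $I$, so the only element of $I$ removed from $U$ at any step is the processed element $v$ itself (and only if $v\in I$). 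Consequently every element of $I$ is either still in $C$ at the end or has been placed in $T$, giving $I\subseteq T\cup C$.

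The crucial point is that $C$ depends only on $T$. At each step the choice of $v$ is determined by $U$, and $U$ is determined by the history of the algorithm. The only branching occurs at the test ``is $v\in I$?''. But by the observation above no element of $I$ is ever removed from $U$ unless it is processed, so at the moment $v$ is processed the condition $v\in I$ is equivalent to $v$ being eventually added to $T$, i.e.\ $v\in T$. Hence a parallel algorithm that takes $T$ as input and replaces the test ``$v\in I$?'' by ``$v\in T$?'' produces exactly the same sequence of sets $U$, and so $C$ is indeed a function $f(T)$ of $T$.

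It remains to bound $|T|$. Whenever $|U|>m$, the hypothesis gives $\comp(U)\geq |U|d$, so the comparability graph on $U$ has average degree at least $2d$, and in particular the chosen $v$ is comparable to at least $2d$ other elements of $U$. Thus each time an element is added to $T$ at least $2d+1$ elements are removed from $U$, while the steps that do not enlarge $T$ remove one element each. Since the total number of deletions is at most $|P|$, we obtain $(2d+1)|T|\leq |P|$, i.e.\ $|T|\leq |P|/(2d+1)$, as required. The only mildly delicate point is the recoverability of $C$ from $T$, and this is precisely where the antichain property of $I$ is needed.
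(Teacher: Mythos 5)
Your proof is correct and follows essentially the same Kleitman--Winston approach as the paper; the paper proves the more general multi-stage Lemma~\ref{generalContK} and obtains this statement as the $k=1$ case, while you argue the single-stage version directly. The only cosmetic difference is your stopping rule (halt when $|U|\leq m$) versus the paper's (halt when the maximum comparability degree in $U$ drops below $2d$, from which $|U|\leq m$ is then deduced via the contrapositive of the supersaturation hypothesis); both yield the required bounds $|T|\leq |P|/(2d+1)$ and $|f(T)|\leq m$, and your well-definedness argument for $f$ matches the paper's.
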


A key idea from~\cite{RandomSperner} is that it can often be advantageous to prove a `multi-stage' container lemma, where the purpose of the early stages is to `thin out' the poset to reduce the overall number of containers (see also~\cite{BaloghWagnerBoolean}). We prove the following lemma of this type, which implies Lemma~\ref{generalCont}.

\begin{lem}
\label{generalContK}
For $k\geq1$ let $d_1> \dots> d_k$ and $m_0> \dots> m_k$ be positive integers and let $P$ be a poset such that $|P|=m_0$ and, for $1\leq j\leq k$, every subset $S$ of $P$ of cardinality greater than $m_j$ contains at least $|S|d_j$ comparable pairs. Then there exists functions $f_1,\dots,f_k$ where
\[f_j:\binom{P}{\leq \sum_{r=1}^j \left(m_{r-1}/(2d_r + 1)\right)}\to\binom{P}{\leq m_j}\]
such that for every antichain $I\subseteq P$ there exists disjoint subsets $T_1,\dots,T_k$ of $I$ with
\begin{enumerate}[(i)]
\item \label{CONTeachSSmall}$|T_{j}|\leq m_{j-1}/(2d_{j}+1)$ for $1\leq j\leq k$,
\item \label{CONTSinf}$T_{j}\subseteq f_{j-1}\left(\bigcup_{r=1}^{j-1}T_r\right)$ for $2\leq j\leq k-1$, 
\item \label{CONTdisjoint}$\left(\bigcup_{r=1}^j T_r\right)\cap f_j\left(\bigcup_{r=1}^j T_r\right)=\emptyset$ for $1\leq j\leq k$ and
\item\label{CONTcontainersContain} $I\subseteq \left(\bigcup_{r=1}^kT_r\right) \cup f_k\left(\bigcup_{r=1}^kT_r\right)$.
\end{enumerate}
\end{lem}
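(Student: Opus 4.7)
The plan is to construct $f_1,\dots,f_k$ through $k$ successive runs of a greedy algorithm in the style of Kleitman and Winston, following the approach of~\cite{RandomSperner,BaloghWagnerBoolean}. Fix an arbitrary linear order on $P$. Given an antichain $I\subseteq P$, we build sets $T_1,\dots,T_k$ and a decreasing sequence of containers $P=C_0\supseteq C_1\supseteq\dots\supseteq C_k$ stage by stage. At stage $j$, initialise $S:=C_{j-1}$ and $T_j:=\emptyset$, then repeat the following while $|S|>m_j$: let $v$ be the vertex of $S$ with largest comparable-degree inside $S$ (breaking ties by the linear order). If $v\in I$, add $v$ to $T_j$ and delete from $S$ both $v$ and every element of $S$ comparable to $v$; otherwise, simply delete $v$. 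Set $C_j:=S$ when the loop terminates.

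Properties~\eqref{CONTeachSSmall},~\eqref{CONTSinf},~\eqref{CONTdisjoint},~\eqref{CONTcontainersContain} are direct consequences of this construction. While $|S|>m_j$, the hypothesis ensures $S$ contains at least $|S|d_j$ comparable pairs, so its average comparable-degree is at least $2d_j$ and the selected $v$ has comparable-degree $\geq 2d_j$; hence each element added to $T_j$ witnesses the removal of at least $2d_j+1$ elements from $C_{j-1}$, giving $|T_j|\leq m_{j-1}/(2d_j+1)$. By construction, $T_j\subseteq C_{j-1}$ and $T_j\cap C_j=\emptyset$, from which~\eqref{CONTSinf} and~\eqref{CONTdisjoint} follow. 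For~\eqref{CONTcontainersContain}, observe that antichain elements are pairwise incomparable, so no element of $I$ other than the selected $v$ is ever collaterally deleted during the ``$v\in I$'' branch; consequently every element of $I$ is either added to some $T_\ell$ or survives into $C_k$.

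The crux of the argument, and the main obstacle, is that $C_j$ must depend only on the union $U:=T_1\cup\dots\cup T_j$ and not on the antichain $I$, so that we can define $f_j(U):=C_j$. The key observation is that at any step of the stage $\ell$ algorithm, if the currently selected max-degree vertex $v$ lies in $I$, then $v$ must lie in $T_\ell$. Indeed, $v$ is deleted from $S$ at this step, so $v\notin C_\ell$; but for every $\ell'>\ell$ we have $T_{\ell'}\subseteq C_{\ell'-1}\subseteq C_\ell$, forcing $v\notin T_{\ell'}$. Since $v\in I$ must therefore be added to some $T_{\ell''}$ with $\ell''\leq \ell$ (elements of $I$ can only leave $S$ by being added to some $T_{\ell''}$, by the incomparability remark above), and it is visible in $S$ at the start of stage $\ell$, we get $\ell''=\ell$. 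In particular, throughout the entire $k$-stage process the query ``$v\in I$'' can be replaced by the equivalent query ``$v\in U$''.

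It follows that the entire construction can be simulated knowing only $U$: run the same deterministic greedy process, but query membership in $U$ rather than $I$. By the previous paragraph the simulated trajectory agrees with the original, producing the same $C_1,\dots,C_k$. We define $f_j(U):=C_j$, extended arbitrarily to inputs $U$ that do not arise from any antichain; the domain bound $|U|\leq \sum_{r=1}^j m_{r-1}/(2d_r+1)$ follows by summing the bounds on $|T_r|$, and the codomain bound $|f_j(U)|\leq m_j$ holds by construction of the termination condition. The only non-routine step in the whole argument is the oracle-replacement observation of the previous paragraph; everything else is direct bookkeeping.
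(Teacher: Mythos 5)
Your proof is correct and follows the same Kleitman--Winston framework as the paper, including the same crucial observation that the oracle query ``$v\in I$?'' can be replaced by ``$v\in U$?'' because a high-degree vertex of $I$ selected during stage $\ell\leq j$ is necessarily placed into $T_\ell\subseteq U$, while a selected vertex outside $I$ cannot lie in any $T_r\subseteq I$. The only (cosmetic) difference is that you run $k$ separate passes with the size-based stopping rule ``$|S|\leq m_j$,'' whereas the paper runs a single continuous pass and transitions between the targets $T_1,\dots,T_k$ when the maximum comparable-degree crosses the thresholds $2d_1>\dots>2d_k$; these are equivalent under the hypothesis linking $m_j$ and $d_j$, and both yield the same bounds and the same well-definedness argument.
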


\begin{proof}
Fix an arbitrary total order $x_1,\dots,x_{|P|}$ on the elements of $P$ (this will only be used to `break ties' later in the proof). The proof of this lemma amounts to a simple application of the so-called Kleitman--Winston algorithm~\cite{KleitmanWinston}. This algorithm takes, as an input, an antichain $I$ of $P$ and produces the sets $T_1,\dots,T_k$ as well as the sets $f_1(T_1), \dots,f_k\left(\bigcup_{r=1}^kT_r\right)$. After describing this algorithm,  we will verify that these sets have all of the desired properties and that the functions $f_1,\dots,f_k$ are well defined; i.e., we show that $f_j\left(\bigcup_{r=1}^jT_r\right)$ depends only on $\bigcup_{r=1}^jT_r$ and not on the input antichain $I$ or on the sets $T_1,\dots,T_j$ individually.

Given a set $S\subseteq P$ and $x\in S$, define $N_{S}(x)$ to be the set of elements of $S$ that are comparable to $x$ and let $d_{S}(x):=|N_{S}(x)|$. Set $P_0:=P$ and let $u_0$ be the element of $P_0$ such that $d_{P_0}(u_0)$ is maximum; if there is a tie, then we let $u_{0}$ be the choice which comes earliest in the total order. Now, given $i\geq1$, the set $P_{i-1}$ and an element $u_{i-1}$ of $P_{i-1}$, the $i$th step of the algorithm will either terminate or it will produce a non-empty set $P_i\subsetneq P_{i-1}$ and an element $u_i$ of $P_i$ which it will pass to the next step. We proceed differently depending on the value of $d_{P_{i-1}}\left(u_{i-1}\right)$.

\begin{case}
\label{bigdegcase}
$d_{P_{i-1}}(u_{i-1})\geq 2d_k$.
\end{case}

Let $j$ be the smallest integer in $\{1,\dots,k\}$ such that $d_{P_{i-1}}(u_{i-1})\geq 2d_j$. If $u_{i-1}\notin I$, then we define $P_i:=P_{i-1}\setminus\{u_{i-1}\}$. On the other hand, if $u_{i-1}\in I$, then we add $u_{i-1}$ to $T_j$ and define $P_i:=P_{i-1}\setminus\left(\{u_{i-1}\}\cup N_{P_{i-1}}(u_{i-1})\right)$. It is clear that $P_i\subsetneq P_{i-1}$.

If $P_i\neq\emptyset$, then let $u_i\in P_i$ such that $d_{P_i}(u_i)$ is maximum and, subject to this, $u_i$ comes earliest in the total order. If, in addition, $d_{P_i}(u_i)\geq 2d_j$, then we simply proceed to the next step of the algorithm. 

Suppose now that $P_i=\emptyset$ or that $d_{P_i}(u_i)< 2d_j$. In the former case, set $\ell:=k-j$ and, in the latter case, let $\ell\in\{0,\dots,k-j\}$ be the largest integer such that $d_{P_i}(u_i)<2d_{j+\ell}$. We do the following:
\begin{itemize}
\item terminate the definition of $T_j$, 
\item set $T_{j+1},\dots,T_{j+\ell}:=\emptyset$,
\item set $f_{j}\left(\bigcup_{r=1}^{j}T_r\right) =\dots =  f_{j+\ell}\left(\bigcup_{r=1}^{j+\ell}T_r\right):=P_{i}$.
\end{itemize}
If $P_i=\emptyset$, then terminate the algorithm; otherwise, proceed to the next step.

\begin{case}
\label{smalldegcase}
$d_{P_{i-1}}\left(u_{i-1}\right) < 2d_k$.
\end{case}

In this case, we simply terminate the algorithm. This concludes the description of the algorithm. 

A key feature of the algorithm is that the sequence $d_{P_0}(u_0),d_{P_1}(u_1),\dots$ is non-increasing, which follows easily from the choice of the elements $u_0,u_1,\dots$ and the fact that $P_{i}\subsetneq P_{i-1}$ for all $i\geq1$. Since this sequence is non-increasing and $d_1>\dots>d_k$ we see that, for each $j\in\{1,\dots,k\}$, there is a unique step of the algorithm such that the definition of $T_j$ and $f_j\left(\bigcup_{r=1}^j T_r\right)$ is finalised. Also, if $1\leq j < j' \leq k$, then $T_j$ and $f_j\left(\bigcup_{r=1}^j T_r\right)$ are defined before $T_{j'}$ and $f_{j'}\left(\bigcup_{r=1}^{j'}T_r\right)$.

It is clear by construction that \eqref{CONTSinf} and \eqref{CONTdisjoint} hold and that $T_j\subseteq I$ for $1\leq j\leq k$. Also, \eqref{CONTcontainersContain} holds since we never delete an element of $I$ during the running of the algorithm. By construction, every element of $f_j\left(\bigcup_{r=1}^jT_r\right)$ is comparable to fewer than $2d_j$ other elements in $f_j\left(\bigcup_{r=1}^jT_r\right)$. Therefore, $\comp\left(f_j\left(\bigcup_{r=1}^jT_r\right)\right)< \left|f_j\left(\bigcup_{r=1}^jT_r\right)\right|d_j$ which implies that $\left|f_j\left(\bigcup_{r=1}^jT_r\right)\right|\leq m_j$ by hypothesis. This verifies that $f_j$ does, indeed, map into $\binom{P}{\leq m_j}$. For each element that we added to $T_1$, we deleted at least $2d_1+1$ elements of $P$. Therefore, $|T_1|\leq |P|/(2d_1+1) = m_{0}/(2d_1+1)$. Similarly, for $j\geq2$, for each element that we added to $T_j$ we deleted at least $2d_j+1$ elements of $f_{j-1}\left(\bigcup_{r=1}^{j-1}T_r\right)$ and so  $|T_j|\leq m_{j-1}/(2d_j+1)$. Thus, \eqref{CONTeachSSmall} holds. 

Finally, we argue that the functions $f_1,\dots,f_k$ are well defined. Let $I$ and $I'$ be antichains yielding the same set $\bigcup_{r=1}^jT_r$ for some $j$ with $1\leq j\leq k$. Let $u_0,u_1,\dots$ and $P_0,P_1,\dots$ be the elements and subsets of $P$ produced while running the algorithm with input $I$ and let $u_0',u_1',\dots$ and $P_0',P_1',\dots$ be the elements and subsets of $P$ produced while running the algorithm with input $I'$. Let $i$ denote the minimum integer such that $d_{P_i}(u_i) < 2d_j$. We prove that $u_t=u_t'$ and $P_t=P_t'$ for all $0\leq t\leq i$. If not, let $t$ be the smallest such integer for which it fails. Clearly, $t\geq1$. Now, if $P_t=P_t'$, then $u_t= u_t'$ follows immediately so we must have $P_t\neq P_t'$. However, since $u_{t-1}=u_{t-1}'$ and $P_{t-1}=P_{t-1}'$, the only way that we can have $P_t\neq P_t'$ is if $u_{t-1}\in I$ and $u_{t-1}'\notin I'$ or vice versa. By definition of $i$,  we have $d_{P_{t-1}}(u_{t-1})\geq 2d_j$ and so  $u_{t-1}\in I$ implies $u_{t-1}\in \bigcup_{r=1}^jT_r$ by the description of the algorithm. However, $u_{t-1}'\notin I'$ implies that the algorithm will not add $u_{t-1}'$ to any of the sets $T_1,\dots,T_k$ and so $u_{t-1}'\notin \bigcup_{r=1}^jT_r$, contradicting the fact that $u_{t-1}=u_{t-1}'$. Therefore $f_j\left(\bigcup_{r=1}^jT_r\right)$ does not depend on $I$. This argument also proves that it depends only on the union $\bigcup_{r=1}^jT_r$ and not on the sets $T_1,\dots,T_j$ individually. This completes the proof.
\end{proof}

The following lemma is a consequence of Lemma~\ref{generalContK} which we will use to count antichains in the next section. 

\begin{lem}
\label{containersCor}
For $k\geq1$ let $d_1> \dots> d_k$ and $m_0> \dots> m_k$ be positive integers and let $P$ be a poset such that $|P|=m_0$ and, for $1\leq j\leq k$, every subset $S$ of $P$ of cardinality greater than $m_j$ contains at least $|S|d_j$ comparable pairs. Then there is a collection $\mathcal{F}$ of subsets of $P$ such that
\begin{enumerate}[(a)]
\item \label{collectionBound}$|\mathcal{F}|\leq \prod_{r=1}^k\binom{m_{r-1}}{\leq m_{r-1}/(2d_r+1)}$,
\item \label{setBound}$|A|\leq m_k + \sum_{r=1}^k\frac{m_{r-1}}{2d_r+1}$ for every $A\in\mathcal{F}$ and
\item \label{Fcontains}for every antichain $I$ of $P$, there exists $A\in \mathcal{F}$ such that $I\subseteq A$. 
\end{enumerate}
\end{lem}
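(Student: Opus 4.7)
The plan is to deduce Lemma~\ref{containersCor} as a direct corollary of Lemma~\ref{generalContK} by defining $\mathcal{F}$ to consist precisely of the sets $\bigl(\bigcup_{r=1}^k T_r\bigr) \cup f_k\bigl(\bigcup_{r=1}^k T_r\bigr)$ obtained from all valid choices of the tuples $(T_1,\dots,T_k)$ that can arise in the conclusion of that lemma. Property (c) is then immediate from conclusion (iv) of Lemma~\ref{generalContK}: for every antichain $I$, there exist disjoint sets $T_1,\dots,T_k$ witnessing $I \subseteq \bigl(\bigcup_{r=1}^k T_r\bigr) \cup f_k\bigl(\bigcup_{r=1}^k T_r\bigr)$, and this latter set lies in $\mathcal{F}$ by construction.

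For the size bound (b), I would simply use conclusion (i) of Lemma~\ref{generalContK} together with the fact that $f_k$ maps into $\binom{P}{\leq m_k}$. This yields
\[\left|\bigl(\textstyle\bigcup_{r=1}^k T_r\bigr) \cup f_k\bigl(\bigcup_{r=1}^k T_r\bigr)\right| \leq \sum_{r=1}^k |T_r| + m_k \leq m_k + \sum_{r=1}^k \frac{m_{r-1}}{2d_r+1},\]
which is exactly (b).

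The main content is the counting in (a), which I would handle by bounding the number of tuples $(T_1,\dots,T_k)$ that can arise, since distinct sets in $\mathcal{F}$ must come from distinct tuples (actually, it suffices to bound the number of tuples, since each tuple determines at most one element of $\mathcal{F}$). I would iterate as follows: the set $T_1$ is a subset of $P$ of cardinality at most $m_0/(2d_1+1)$, giving at most $\binom{m_0}{\leq m_0/(2d_1+1)}$ possibilities. For $j \geq 2$, conclusion (ii) of Lemma~\ref{generalContK} tells us that $T_j \subseteq f_{j-1}\bigl(\bigcup_{r=1}^{j-1} T_r\bigr)$, and this container has cardinality at most $m_{j-1}$ (since $f_{j-1}$ maps into $\binom{P}{\leq m_{j-1}}$); combined with $|T_j| \leq m_{j-1}/(2d_j+1)$ from (i), the number of choices for $T_j$ given $T_1,\dots,T_{j-1}$ is at most $\binom{m_{j-1}}{\leq m_{j-1}/(2d_j+1)}$. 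Multiplying these bounds gives (a).

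No step of this derivation is really an obstacle: the heavy lifting is done inside Lemma~\ref{generalContK} via the Kleitman--Winston algorithm. The only mild point of care is to make sure the counting of $T_j$ uses the correct ambient set (the container $f_{j-1}\bigl(\bigcup_{r=1}^{j-1}T_r\bigr)$ rather than $P$), which is precisely why the multi-stage container lemma was set up to record the nesting property (ii). If the statement of Lemma~\ref{generalContK}(ii) is meant to cover the range $2 \leq j \leq k$ (rather than $2 \leq j \leq k-1$ as printed), this is directly applicable; otherwise one would use instead the simpler bound $|T_k| \leq m_{k-1}/(2d_k+1)$ with $T_k \subseteq f_{k-1}(\bigcup_{r<k}T_r)$, which follows from the algorithm in the obvious way.
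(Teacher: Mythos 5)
Your proof is correct and follows essentially the same route as the paper: build $\mathcal{F}$ from the conclusion of Lemma~\ref{generalContK}, deduce (b) and (c) directly, and obtain (a) by iteratively counting the possible $T_j$ inside the nested containers. You were in fact more careful than the paper in flagging that Lemma~\ref{generalContK}(ii) is stated only for $2\leq j\leq k-1$ while the counting in (a) needs the containment $T_k\subseteq f_{k-1}\bigl(\bigcup_{r<k}T_r\bigr)$ as well; the paper uses this silently, and your remark that it follows from the algorithm (and that the stated range is presumably a typo) is accurate.
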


\begin{proof}
Apply Lemma~\ref{generalContK} to obtain the functions $f_1,\dots,f_k$. For each antichain $I$, let $T_{1,I},\dots,T_{k,I}$ be disjoint sets as in Lemma~\ref{generalContK} such that $I\subseteq \left(\bigcup_{r=1}^kT_{r,I}\right)\cup f\left(\bigcup_{r=1}^kT_{r,I}\right)$.
 Define
\[\mathcal{F}:=\left\{\left(\bigcup_{r=1}^kT_{r,I}\right)\cup f\left(\bigcup_{r=1}^kT_{r,I}\right): I\subseteq P\text{ is an antichain}\right\}.\]
Then \eqref{setBound} and \eqref{Fcontains} hold by construction. Let us show that \eqref{collectionBound} holds. The number of ways of choosing $T_{1,I}$ is at most $\binom{|P|}{\leq |P|/(2d_1+1)}$. Now, for $2\leq j\leq k$, given that $T_{1,I},\dots,T_{j-1,I}$ have been chosen, we know that $T_{j,I}$ is a subset of $f_{j-1}\left(\bigcup_{r=1}^{j-1}T_{r,I}\right)$. Therefore, there are at most $\binom{m_{j-1}}{\leq m_{j-1}/(2d_{j}+1)}$ ways to choose $T_{j,I}$. This argument proves \eqref{collectionBound}. 
\end{proof}

\section{Applications of the Container-Type Lemma}
\label{countAnti}

\subsection{Counting Antichains}

Before presenting the proofs of Theorems~\ref{countVec} and~\ref{countMulti}, we illustrate the the method by applying Lemma~\ref{containersCor} and Theorem~\ref{booleanThm} to obtain a short proof of a known upper bound on the number of antichains in $\mathcal{P}(n)$. The problem of approximating the number of antichains in $\mathcal{P}(n)$ is known as Dedekind's Problem and has a long history; see Kahn~\cite{KahnAnti}.

\begin{thm}[See, e.g.,~\cite{KahnAnti}]
\label{countBoolean}
The number of antichains in $\mathcal{P}(n)$ is at most \[2^{\left(1+O\left(\sqrt{\log(n)/n}\right)\right)\binom{n}{\left\lceil n/2\right\rceil}}.\] 
\end{thm}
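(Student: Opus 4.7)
The plan is to combine the container lemma (Lemma~\ref{containersCor}) with Theorem~\ref{booleanThm} in a two-stage reduction, using $\ell = 2$ at the first stage and $\ell = 1$ at the second. Since every antichain is contained in some $A \in \mathcal{F}$, the number of antichains is at most $|\mathcal{F}| \cdot 2^{\max_{A \in \mathcal{F}}|A|}$, so the goal is to choose the parameters of Lemma~\ref{containersCor} so that both $\log_2|\mathcal{F}|$ and the overhead $\max|A| - \binom{n}{\lceil n/2\rceil}$ are of order $O(\sqrt{\log n/n})\binom{n}{\lceil n/2\rceil}$.

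Setting $\varepsilon := \sqrt{\log n/n}$, I would take $m_0 := 2^n$, $m_1 := \binom{n}{\lceil (n-1)/2\rceil} + \binom{n}{\lceil (n+1)/2\rceil} + \binom{n}{\lceil n/2\rceil}$ (which is $(3+o(1))\binom{n}{\lceil n/2\rceil}$), and $m_2 := \binom{n}{\lceil n/2\rceil} + \lceil\varepsilon \binom{n}{\lceil n/2\rceil}\rceil$. Theorem~\ref{booleanThm} with $k = 2$ ensures that for every $S$ with $|S| > m_1$, $\comp(S) \geq |S| d_1$ for some $d_1 = \Theta(n^2)$ (since $\binom{\lceil(n+2)/2\rceil}{2} = \Theta(n^2)$); Theorem~\ref{booleanThm} with $k = 1$ ensures that for every $S$ with $|S| > m_2$, $\comp(S) \geq |S| d_2$ for some $d_2 = \Theta(\varepsilon n) = \Theta(\sqrt{n\log n})$. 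Since $d_1 > d_2$ and $m_0 > m_1 > m_2$ for large $n$, Lemma~\ref{containersCor} produces a family $\mathcal{F}$ of containers.

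Using $\binom{n}{\lceil n/2\rceil} = \Theta(2^n/\sqrt{n})$, both $\frac{m_0}{2d_1+1} = O(2^n/n^2) = O(1/n^{3/2})\binom{n}{\lceil n/2\rceil}$ and $\frac{m_1}{2d_2+1} = O(1/\sqrt{n\log n})\binom{n}{\lceil n/2\rceil}$ are $o(\sqrt{\log n/n}\binom{n}{\lceil n/2\rceil})$, so by Lemma~\ref{containersCor}(b) we have $\max_{A\in\mathcal{F}}|A| \leq m_2 + o(\varepsilon\binom{n}{\lceil n/2\rceil}) = (1 + O(\sqrt{\log n/n}))\binom{n}{\lceil n/2\rceil}$. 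Plugging $\binom{N}{\leq K} \leq (eN/K)^K$ into Lemma~\ref{containersCor}(a), the stage-$1$ contribution to $\log_2|\mathcal{F}|$ is $O((2^n\log n)/n^2) = o(\sqrt{\log n/n}\binom{n}{\lceil n/2\rceil})$ and the stage-$2$ contribution is $O(\log n/\sqrt{n\log n})\binom{n}{\lceil n/2\rceil} = O(\sqrt{\log n/n})\binom{n}{\lceil n/2\rceil}$. Multiplying these estimates gives the desired bound.

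The main point to get right is the parameter balancing, which is what forces the two-stage structure. A single-stage reduction from $2^n$ to $(1+o(1))\binom{n}{\lceil n/2\rceil}$ using only Theorem~\ref{booleanThm} with $\ell = 1$ would need $d_1 = \Omega(n\sqrt{\log n})$ to keep $\log_2|\mathcal{F}|$ small enough; but Theorem~\ref{booleanThm} with $\ell = 1$ produces at best $\comp(S)/|S| \leq n/2 + O(1)$, giving $d_1 = O(n)$. Taking $\ell = 2$ in the first pass buys the extra factor of $n$ in the supersaturation constant needed for a cheap initial peel down to $\sim 3\binom{n}{\lceil n/2\rceil}$ elements, after which $\ell = 1$ is strong enough to refine each container to size $(1 + O(\varepsilon))\binom{n}{\lceil n/2\rceil}$; the choice $\varepsilon = \sqrt{\log n/n}$ equalises the dominant stage-$2$ contributions to $\log_2|\mathcal{F}|$ and to the size overhead of the containers.
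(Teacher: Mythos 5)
Your proof is correct and follows essentially the same approach as the paper: a two-stage application of Lemma~\ref{containersCor}, with Theorem~\ref{booleanThm} at $k=2$ giving a supersaturation constant $d_1=\Theta(n^2)$ to peel the full lattice down to containers of size $O(\binom{n}{\lceil n/2\rceil})$ cheaply, then Theorem~\ref{booleanThm} at $k=1$ with $d_2=\Theta(\sqrt{n\log n})$ to refine them to size $(1+O(\sqrt{\log n/n}))\binom{n}{\lceil n/2\rceil}$, with the $\sqrt{\log n/n}$ threshold chosen to balance the container-size overhead against the $\log_2|\mathcal{F}|$ cost of the second stage. The paper chooses $d_1=n\sqrt{\log n}$, $d_2=\sqrt{n\log n}$ up front and solves for the matching $m_1,m_2$, whereas you fix $m_1=(3+o(1))\binom{n}{\lceil n/2\rceil}$ and $m_2=(1+\varepsilon)\binom{n}{\lceil n/2\rceil}$ and read off the resulting $d_j$; these are interchangeable parametrisations of the same argument and your explanation of why a single-stage peel ($k=1$ only) would fail is exactly the right point.
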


\begin{proof}
We generate a set of containers for the antichains of $\mathcal{P}(n)$ using Lemma~\ref{containersCor}. An upper bound on the number of antichains will then follow by considering all subsets of the containers. Set $d_1:=n\sqrt{\log{n}}$, $d_2:=\sqrt{n\log{n}}$, $m_0:=2^n$, $m_1:=\frac{2\binom{n}{\left\lfloor n/2\right\rfloor}}{1-8d_1/n^2}$ and $m_2:=\frac{\binom{n}{\left\lfloor n/2\right\rfloor}}{1-2d_2/n}$. If $S$ is a subset of $\mathcal{P}(n)$ of cardinality at least $m_1$, then, by Theorem~\ref{booleanThm},
\[\comp(S)\geq \left(|S| - 2\binom{n}{\left\lceil n/2\right\rceil}\right)\left(\frac{n^2}{8}\right)\]
\[ = \left(|S|-m_1\right)\left(\frac{n^2}{8}\right) + \left(m_1 - 2\binom{n}{\left\lceil n/2\right\rceil}\right)\left(\frac{n^2}{8}\right) \geq (|S|-m_1)d_1 + m_1d_1 = |S|d_1\]
since $\left(m_1-2\binom{n}{\left\lfloor n/2\right\rfloor}\right)\left(\frac{n^2}{8}\right) = m_1d_1$.  Similarly, if $S$ has cardinality at least $m_2$, then $\comp(S)\geq |S|d_2$. Therefore, we can apply Lemma~\ref{containersCor} to $\mathcal{P}(n)$ with $k=2$ and these values of $d_1,d_2,m_0,m_1$ and $m_2$ to obtain a collection $\mathcal{F}$ of at most $\binom{2^n}{\leq 2^n/d_1}\binom{m_1}{\leq m_1/d_2}$ containers, each of cardinality at most $m_2+\frac{m_1}{d_2}+\frac{2^n}{d_1}$, such that each antichain of $\mathcal{P}(n)$ is contained in at least one container. Thus, the number of antichains in $\mathcal{P}(n)$ is at most
\begin{align*}\binom{2^n}{\leq 2^n/d_1}\binom{m_1}{\leq m_1/d_2}2^{m_2+\frac{m_1}{d_2} + \frac{2^n}{d_1}}
&\leq \left(e\cdot d_1\right)^{2^n/d_1}\left(e\cdot d_2\right)^{m_1/d_2}2^{m_2+\frac{m_1}{d_2} + \frac{2^n}{d_1}}\\
&= 2^{\frac{2^n\log_2(e\cdot d_1)}{d_1} + \frac{m_1\log_2(e\cdot d_2)}{d_2} + m_2 + \frac{m_1}{d_2} + \frac{2^n}{d_1}} \\
&\leq 2^{m_2 + O\left(\frac{2^n\log{n}}{d_1} + \frac{m_1\log{n}}{d_2}\right)}\end{align*}
for $n$ sufficiently large. Now, bounding the exponent, we get
\begin{align*}m_2 + O\left(\frac{2^n\log{n}}{d_1} + \frac{m_1\log{n}}{d_2}\right)&= \frac{\binom{n}{\left\lfloor n/2\right\rfloor}}{1-2d_2/n} + O\left(\frac{2^n\sqrt{\log{n}}}{n} + \binom{n}{\left\lfloor n/2\right\rfloor}\sqrt{\log{n}/n}\right)\\
&\leq \binom{n}{\left\lfloor n/2\right\rfloor} + \frac{4d_2}{n} \binom{n}{\left\lfloor n/2\right\rfloor} +O\left(\binom{n}{\left\lfloor n/2\right\rfloor}\sqrt{\log{n}/n}\right)\\
& = \left(1+O\left(\sqrt{\log{n}/n}\right)\right)\binom{n}{\left\lfloor n/2\right\rfloor}\end{align*}
since $\binom{n}{\left\lfloor n/2\right\rfloor} =\Theta\left(\frac{2^n}{\sqrt{n}}\right)$ by Stirling's Approximation. The result follows.
\end{proof}

We now prove Theorems~\ref{countVec} and~\ref{countMulti} by following along similar lines to the proof of Theorem~\ref{countBoolean} given above. 

\begin{proof}[Proof of Theorem~\ref{countVec}]
For brevity, define $N:=|\mathcal{V}(q,n)|$ and note that $N=\Theta\left(q^{n^2/2}\right)$ by Theorem~\ref{wilf}. Let $d:=\sqrt{n}q^{n/4}$ and $m:=\frac{\qbinom{n}{\left\lfloor n/2\right\rfloor}{q}}{1-d\qbinom{\left\lceil (n+1)/2\right\rceil}{1}{q}^{-1}}$. Note that Theorem~\ref{wilf} also implies that $m=\Theta\left(q^{n^2/2}\right)$ and so $m$ and $N$ are of the same order of magnitude. By Theorem~\ref{vecSpThm}, if $S$ is a subset of $\Vectors{q}{n}$ of cardinality at least $m$, then 
\[\comp(S)\geq \left(|S|-\qbinom{n}{\left\lfloor n/2\right\rfloor}{q}\right)\qbinom{\left\lceil (n+1)/2\right\rceil}{1}{q}\]
\[=\left(|S|-m\right)\qbinom{\left\lceil (n+1)/2\right\rceil}{1}{q} + \left(m-\qbinom{n}{\left\lfloor n/2\right\rfloor}{q}\right)\qbinom{\left\lceil (n+1)/2\right\rceil}{1}{q}\geq \left(|S|-m\right)d + md = |S|d.\]
Thus, by Lemma~\ref{containersCor}, there is a collection of at most $\binom{N}{\leq N/d}$ containers, each of cardinality at most $m+\frac{N}{d}$, such that each antichain is contained in at least one container. Thus, the number of antichains in $\Vectors{q}{n}$ is at most
\[\binom{N}{\leq N/d}2^{m+\frac{ N}{d}}\leq\left(e\cdot d\right)^{\frac{ N}{d}}2^{m+\frac{ N}{d}}= 2^{m+ \log N + \left(\frac{ N}{d}\right)\left(\log(e\cdot d) + 1\right)}\]
Since $q$ is fixed, we can use Theorem~\ref{wilf} to bound the exponent as follows:
\begin{align*}m+ \log N + \left(\frac{ N}{d}\right)\left(\log(e\cdot d) + 1\right) &\leq\frac{\qbinom{n}{\left\lfloor n/2\right\rfloor}{q}}{1-d\qbinom{\left\lceil (n+1)/2\right\rceil}{1}{q}^{-1}} + O\left(n^2 + \frac{ N\cdot n}{d}\right)\\
&\leq \qbinom{n}{\left\lfloor n/2\right\rfloor}{q} + \frac{2d\qbinom{n}{\left\lfloor n/2\right\rfloor}{q}}{\qbinom{\left\lceil (n+1)/2\right\rceil}{1}{q}} + O\left(\frac{ N\cdot n}{d}\right).\end{align*}
This completes the proof since $ N = \Theta\left(\qbinom{n}{\left\lfloor n/2\right\rfloor}{q}\right)$ and, for fixed $q$, we have $\qbinom{\left\lceil (n+1)/2\right\rceil}{1}{q}=\Theta\left(q^{n/2}\right)$. 
\end{proof}

\begin{proof}[Proof of Theorem~\ref{countMulti}]
Set $d_1:=n\sqrt{\log{n}}$, $d_2:=\sqrt{n\log{n}}$, $m_0:=3^n$, $m_1:=\frac{2\ell_n(n)}{1-50d_1/n^2}$ and $m_2:=\frac{\ell_n(n)}{1-4d_2/n}$. If $S$ is a subset of $\{0,1,2\}^n$ of cardinality at least $m_1$, then, by Theorem~\ref{multisetThm},
\[\comp(S)\geq \left(|S| - 2\ell_n(n)\right)\left(\frac{n^2}{50}\right)\]
\[ = \left(|S|-m_1\right)\left(\frac{n^2}{50}\right) + \left(m_1 - 2\ell_n(n)\right)\left(\frac{n^2}{50}\right) \geq (|S|-m_1)d_1 + m_1d_1 = |S|d_1,\]
since $\left(m_1-2\ell_n(n)\right)\left(\frac{n^2}{50}\right) = m_1d_1$.  Similarly, if the cardinality of $S$ is at least $m_2$, then $\comp(S)\geq |S|d_2$. Therefore, we can apply Lemma~\ref{containersCor} to $\{0,1,2\}^n$ with $k=2$ and these values of $d_1,d_2,m_0,m_1$ and $m_2$ to obtain a collection $\mathcal{F}$ of at most $\binom{3^n}{\leq 3^n/d_1}\binom{m_1}{\leq m_1/d_2}$ containers, each of cardinality at most $m_2+\frac{m_1}{d_2} + \frac{3^n}{d_1}$, such that each antichain of $\{0,1,2\}^n$ is contained in at least one container. We get that the number of antichains in $\{0,1,2\}^n$ is at most
\begin{align*}
\binom{3^n}{\leq 3^n/d_1}\binom{m_1}{\leq m_1/d_2}2^{m_2+\frac{m_1}{d_2} + \frac{3^n}{d_1}}&\leq \left(e\cdot d_1\right)^{3^n/d_1}\left(e\cdot d_2\right)^{m_1/d_2}2^{m_2+\frac{m_1}{d_2} + \frac{3^n}{d_1}}\\
&\leq 2^{m_2 + O\left(\frac{3^n\log{n}}{d_1} + \frac{m_1\log{n}}{d_2}\right)}\end{align*}
for $n$ sufficiently large. Now, bounding the exponent, we get
\begin{align*} m_2 + O\left(\frac{3^n\log{n}}{d_1} + \frac{m_1\log{n}}{d_2}\right)&= \frac{\ell_n(n)}{1-4d_2/n} + O\left(\frac{3^n\log{n}}{d_1} + \frac{\ell_n(n)\log{n}}{d_2}\right)\\
&\leq \ell_n(n) + \frac{8d_2\ell_n(n)}{n}  + O\left(\frac{3^n\log{n}}{d_1} + \frac{\ell_n(n)\log{n}}{d_2}\right)\\
& = \left(1+O\left(\sqrt{\log{n}/n}\right)\right)\ell_n(n)\end{align*}
since $\ell_n(n) =\Theta\left(\frac{3^n}{\sqrt{n}}\right)$ by Theorem~\ref{anderson}. The result follows.
\end{proof}

\begin{rem}
Balogh, Treglown and Wagner~\cite{BaloghWagnerBoolean} have also used the container method to prove an upper bound on the number of antichains in $\mathcal{P}(n)$. Their proof only applies a single-stage container lemma (similar to Lemma~\ref{generalCont}), however, and so the bound that they obtain is weaker than that of Theorem~\ref{countBoolean}. 

In the proofs of Theorems~\ref{countBoolean} and~\ref{countMulti} we used Lemma~\ref{generalContK} with $k=2$ and in the proof of Theorem~\ref{countVec} we used Lemma~\ref{generalContK} with $k=1$. We remark that it is not possible to obtain better bounds by following the same proof with a larger fixed value of $k$.

We should also mention that much stronger bounds than that of Theorem~\ref{countBoolean} are known. In particular, Korshunov~\cite{Korsh} obtained precise asymptotics of the number of antichains in $\mathcal{P}(n)$.
\end{rem}

\subsection{Antichains in Random Subsets}

In this section, we use Lemma~\ref{containersCor} to prove Theorems~\ref{randVecSp} and~\ref{randMultiset}. We remark that our approach is very similar to that of Balogh, Mycroft and Treglown~\cite{RandomSperner} who used a rough form of Theorem~\ref{booleanThm} to show that, when $p\gg 1/n$, the largest antichain in a $p$-random subset of $\mathcal{P}(n)$ is $(1+o(1))p\binom{n}{\left\lfloor n/2\right\rfloor}$ with high probability; the same result was obtained independently by Collares Neto and Morris~\cite{RandomErdos} using a somewhat different approach. 

In both proofs, the first step will be to use Lemma~\ref{generalCont} to obtain a collection of containers for the antichains of the poset. Now, if a $p$-random subset contains a large antichain, then in particular it must contain a large subset of one of the containers (since every antichain is contained in some container). By applying a version of the standard Chernoff Bound (Theorem~\ref{Cher}, stated below) one obtains that the probability that this occurs for any fixed container is very small and, using the fact that the number of containers is not too large, we can apply a simple union bound to complete the proof.

\begin{thm}[The Chernoff Bound; see, e.g.,~\cite{ConcentrationBook}]
\label{Cher}
Suppose that $X_1,\dots,X_n$ are independent random variables taking values in $\{0,1\}$ and let $X:=\sum_{i=1}^nX_i$. Then
\[\mathbb{P}(X>(1+\delta)\mathbb{E}(X))\leq 
\begin{cases}  e^{-\frac{\delta^2\mathbb{E}(X)}{3}}& \text{if }0<\delta<1, \\
						  e^{-\frac{\delta\mathbb{E}(X)}{3}}& \text{if }\delta>1.\end{cases}\]
\end{thm}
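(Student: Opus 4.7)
The plan is to prove both tail bounds via the classical exponential moment method (sometimes called the Bernstein trick). For any $t>0$, Markov's inequality applied to the non-negative random variable $e^{tX}$ gives
\[\mathbb{P}\left(X > (1+\delta)\mathbb{E}(X)\right) = \mathbb{P}\left(e^{tX} > e^{t(1+\delta)\mathbb{E}(X)}\right) \leq \frac{\mathbb{E}\left(e^{tX}\right)}{e^{t(1+\delta)\mathbb{E}(X)}}.\]
Since the $X_i$ are independent, $\mathbb{E}(e^{tX}) = \prod_{i=1}^n \mathbb{E}(e^{tX_i})$, and if we write $p_i := \mathbb{P}(X_i = 1)$ then $\mathbb{E}(e^{tX_i}) = 1 + p_i(e^t - 1) \leq \exp\left(p_i(e^t - 1)\right)$ using $1+x \leq e^x$. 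Setting $\mu := \mathbb{E}(X) = \sum_i p_i$, this yields $\mathbb{E}(e^{tX}) \leq \exp\left(\mu(e^t-1)\right)$, and hence
\[\mathbb{P}\left(X > (1+\delta)\mu\right) \leq \exp\left(\mu(e^t - 1) - t(1+\delta)\mu\right).\]

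The next step is to optimise over $t>0$. Differentiating the exponent in $t$, the optimum occurs at $t = \ln(1+\delta)$ (which is positive since $\delta > 0$), giving the classical bound
\[\mathbb{P}\left(X > (1+\delta)\mu\right) \leq \left(\frac{e^\delta}{(1+\delta)^{1+\delta}}\right)^\mu = \exp\left(-\mu\bigl((1+\delta)\ln(1+\delta) - \delta\bigr)\right).\]

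It remains to bound $\varphi(\delta) := (1+\delta)\ln(1+\delta) - \delta$ from below in the two regimes. For $0 < \delta < 1$, I would verify $\varphi(\delta) \geq \delta^2/3$ by a Taylor-expansion / calculus argument: the function $\varphi(\delta) - \delta^2/3$ vanishes at $\delta = 0$ together with its first derivative, and one checks by differentiating again that its second derivative $\frac{1}{1+\delta} - \frac{2}{3}$ is non-negative on $(0,1)$, so $\varphi(\delta) \geq \delta^2/3$ on this interval. This yields the first bound. For $\delta > 1$, I would similarly verify $\varphi(\delta) \geq \delta/3$; one way is to note $\varphi(1) = 2\ln 2 - 1 > 1/3$ and then check $\varphi'(\delta) = \ln(1+\delta) \geq 1/3$ for all $\delta \geq 1$, so $\varphi(\delta) - \delta/3$ is non-decreasing on $[1,\infty)$ and hence non-negative there. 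Substituting into the displayed inequality gives the two stated bounds.

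The only mildly delicate part is the two elementary inequalities for $\varphi(\delta)$; the rest is a textbook application of Markov to the moment generating function combined with independence. Since the statement is cited to a standard reference (\cite{ConcentrationBook}), no further novelty is needed and the argument above is essentially the one universally given.
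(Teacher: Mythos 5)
The paper does not prove Theorem~\ref{Cher}; it is imported as a standard result with a citation to \cite{ConcentrationBook}. Your plan is the textbook Chernoff argument, which is indeed the right one, and the overall structure (moment generating function bound, optimise at $t=\ln(1+\delta)$, then lower-bound $\varphi(\delta):=(1+\delta)\ln(1+\delta)-\delta$) is sound.

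However, there is a genuine error in the calculus step for the regime $0<\delta<1$. You assert that the second derivative of $g(\delta):=\varphi(\delta)-\delta^2/3$, namely $g''(\delta)=\frac{1}{1+\delta}-\frac{2}{3}$, is non-negative on $(0,1)$. This is false: $\frac{1}{1+\delta}\geq\frac{2}{3}$ iff $\delta\leq\frac{1}{2}$, so $g''$ changes sign at $\delta=\frac{1}{2}$ and is strictly negative on $(\frac{1}{2},1)$. The inequality $\varphi(\delta)\geq\delta^2/3$ on $(0,1]$ is nevertheless true, but your justification does not establish it. A correct version of the same argument: $g(0)=g'(0)=0$ and $g'(\delta)=\ln(1+\delta)-\frac{2\delta}{3}$, so $g'$ increases on $[0,\frac12]$ and decreases on $[\frac12,1]$, hence on $[0,1]$ it stays at least $\min\{g'(0),g'(1)\}=\min\{0,\ln 2-\tfrac23\}=0$ (since $\ln 2\approx 0.693>\tfrac23$). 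Thus $g'\geq0$ on $[0,1]$, so $g$ is non-decreasing and $g(\delta)\geq g(0)=0$, as required. (Equivalently, one can use the well-known sharper bound $\varphi(\delta)\geq\frac{\delta^2}{2+\delta}$, which gives $\geq\frac{\delta^2}{3}$ precisely when $\delta\leq1$.) Your argument for the $\delta>1$ regime is correct: $\varphi(1)=2\ln 2-1>\tfrac13$ and $\varphi'(\delta)=\ln(1+\delta)\geq\ln 2>\tfrac13$ for $\delta\geq1$, so $\varphi(\delta)-\delta/3$ is non-decreasing and positive on $[1,\infty)$.
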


We prove Theorem~\ref{randVecSp}.

\begin{proof}[Proof of Theorem~\ref{randVecSp}]
Define $N:=|\Vectors{q}{n}|$ and note that $N=\Theta\left(\qbinom{n}{\left\lceil n/2\right\rceil}{q}\right)$ by Theorem~\ref{wilf}. Let $p=c(\varepsilon,q)/q^{n/2}$ for some constant $c(\varepsilon,q)$, to be chosen later, and let $A_p$ be a subset of $\Vectors{q}{n}$ obtained by including each element independently with probability $p$. Let $m:=(1+\varepsilon/2)\qbinom{n}{\left\lceil n/2\right\rceil}{q}$. By Theorem~\ref{vecSpThm}, every subset $S$ of $\Vectors{q}{n}$ of cardinality at least $m$ satisfies
\[\comp(S)\geq \left(1-\frac{1}{1+\varepsilon/2}\right)\qbinom{\left\lceil (n+1)/2\right\rceil}{1}{q}|S|\geq \left(\frac{\varepsilon}{3}\right)\qbinom{\left\lceil (n+1)/2\right\rceil}{1}{q}|S|\]
given that $\varepsilon\in(0,1)$. So, we set $d:=\left(\frac{\varepsilon}{3}\right)\qbinom{\left\lceil (n+1)/2\right\rceil}{1}{q}$ and let $f$ be the function resulting from applying Lemma~\ref{generalCont} to $\Vectors{q}{n}$ with these values $m$ and $d$. Note that, since $q$ is constant, we have $d=\Theta\left(q^{n/2}\right)$. In particular, $p$ is bounded below by a constant multiple of $c(\varepsilon,q)/d$.

 Fix $T\subseteq \Vectors{q}{n}$ such that $|T|\leq\frac{ N}{2d+1}$. The probability that $A_p$ contains an antichain $I$ of size at least $(1+\varepsilon)p\qbinom{n}{\left\lceil n/2\right\rceil}{q}$ with $T\subseteq I\subseteq T\cup f(T)$ is at most
\[\mathbb{P}(T\subseteq A_p)\cdot\mathbb{P}\left(|A_p\cap f(T)|\geq p(1+\varepsilon)\qbinom{n}{\left\lceil n/2\right\rceil}{q}-|T|\right)\]
since $T\cap f(T)=\emptyset$. By Theorem~\ref{wilf}, $|T|\leq \frac{ N}{2d+1}=O\left(q^{-n/2}\qbinom{n}{\left\lceil n/2\right\rceil}{q}\right)$. By choosing $c(\varepsilon,q)$ sufficiently large, we can make $|T|\leq \left(\frac{\varepsilon}{4}\right)p\qbinom{n}{\left\lceil n/2\right\rceil}{q}$ and so we get that the above probability is at most
\[p^{|T|}\mathbb{P}\left(|A_p\cap f(T)|\geq p(1+3\varepsilon/4)\qbinom{n}{\left\lceil n/2\right\rceil}{q}\right).\]
Applying Theorem~\ref{Cher} with $\delta=\frac{(1+3\varepsilon/4)\qbinom{n}{\left\lceil n/2\right\rceil}{q}- |f(T)|}{|f(T)|}$ we get that
\[\mathbb{P}\left(|A_p\cap f(T)|\geq p(1+3\varepsilon/4)\qbinom{n}{\left\lceil n/2\right\rceil}{q}\right)\leq 
\begin{cases}  e^{-\frac{\delta^2|f(T)|p}{3}}& \text{if }0<\delta<1, \\
						  e^{-\frac{\delta|f(T)|p}{3}}& \text{if }\delta>1.\end{cases}\]
Since $|f(T)|\leq m = (1+\varepsilon/2)\qbinom{n}{\left\lceil n/2\right\rceil}{q}$, the right side is bounded above by $e^{-\varepsilon^2 mp/100}$. Therefore, the probability that $A_p$ contains an antichain of size at least $(1+\varepsilon)p\qbinom{n}{\left\lceil n/2\right\rceil}{q}$ is at most
\[\sum_{\substack{T\subseteq \Vectors{q}{n}\\ |T|\leq  N/(2d+1)}}p^{|T|}e^{-\varepsilon^2mp/100} \leq\sum_{a=0}^{ N/(2 d+1)}\binom{ N}{a}p^ae^{-\varepsilon^2mp/100}.\]
Given that $c(\varepsilon,q)$ is large enough, the quantity $\binom{ N}{a}p^a$ is increasing for $0\leq a\leq \frac{ N}{2 d+1}$. Thus, this is bounded above by
\[\left(\frac{ N}{2 d+1} + 1\right)\left(e(2d+1)p\right)^{ N/(2 d+1)}e^{-\varepsilon^2mp/100}.\]
Since $d=\Theta\left(q^{n/2}\right)$ and $N=\Theta(m)$, the quantity $e(2d+1)p$ bounded above by a constant multiple of $c(\varepsilon,q)$ and $\varepsilon^2mp/100$ is bounded below by a constant multiple (depending only on $\varepsilon$ and $q$) of $c(\varepsilon,q)\left(\frac{N}{2d+1}\right)$. Therefore, we may choose $c(\varepsilon,q)$ large enough so that $\log(e(2d+1)p)\left(\frac{N}{2d+1}\right)< \varepsilon^2mp/200$, and so the above expression is $o(1)$. The result follows.
\end{proof}

Let us note that Theorem~\ref{randVecSp} is best possible up to the choice of the constant $c(\varepsilon,q)$. Let $A_p$ be a $p$-random subset of $\Vectors{q}{n}$ where $p=c/q^{n/2}$ for some constant $c>0$ and let $n$ be large with respect to $q$ and $c$. Given $x\in\Vectors{q}{n}$ of dimension $\left\lceil (n+1)/2\right\rceil$, the expected number of subspaces of $x$ of dimension $\left\lfloor n/2\right\rfloor$ contained in $A_p$ is constant. Thus, with probability bounded away from zero, a constant proportion of the elements of $\Vectors{q}{n}\cap A_p$ of dimension $\left\lceil (n+1)/2\right\rceil$ have no subspaces of dimension $\left\lfloor n/2\right\rfloor$ contained in $A_p$. Taking this set along with all subspaces of $\mathbb{F}_q^n$ of dimension $\left \lfloor n/2\right\rfloor$ contained in $A_p$ gives us an antichain of size at least $(1+\varepsilon)p\qbinom{n}{\left\lfloor n/2\right\rfloor}{q}$ for some $\varepsilon=\varepsilon(q,c)>0$. Here, we used the fact that $\qbinom{n}{\left\lceil (n+1)/2\right\rceil}{q}=\Theta\left(\qbinom{n}{\left\lfloor n/2\right\rfloor}{q}\right)$ for fixed $q$ (Theorem~\ref{wilf}). A similar argument shows that Theorem~\ref{randMultiset} is best possible up to the choice of $c(\varepsilon)$. Next, we apply Lemma~\ref{generalCont} and Theorem~\ref{multisetThm} to prove Theorem~\ref{randMultiset}.

\begin{proof}[Proof of Theorem~\ref{randMultiset}]
Let $p=c(\varepsilon)/n$ for some constant $c(\varepsilon)$ to be chosen later and let $A_p$ be a subset of $\{0,1,2\}^n$ obtained by including each element independently with probability $p$. Let $m_1:=3\ell_n(n)$ and let $m_2:=(1+\varepsilon/2)\ell_n(n)$. By Theorem~\ref{multisetThm}, every subset $S$ of $\{0,1,2\}^n$ of cardinality at least $m_1$ satisfies
\[\comp(S)\geq \left(|S|-2\ell_n(n)\right)\left(\frac{n^2}{50}\right)\geq \left(\frac{n^2}{200}\right)|S|\]
Also, by Theorem~\ref{multisetThm}, every subset $S$ of $\{0,1,2\}^n$ of cardinality at least $m_2$ satisfies 
\[\comp(S)\geq \left(|S| - \ell_n(n)\right)\left(\frac{n}{4}\right)\geq  \left(\frac{\varepsilon n}{12}\right) |S|\]
given that $\varepsilon\in(0,1)$.  So, we set $d_1:=\frac{n^2}{200}$ and $d_2:= \frac{\varepsilon n}{12}$. Let $f_1$ and $f_2$ be the functions resulting from applying Lemma~\ref{generalContK} to $\{0,1,2\}^n$ with $k=2$ and the values of $d_1,d_2,m_1,m_2$ above. 

Fix $T_1\subseteq\{0,1,2\}^n$ such that $|T_1|\leq \frac{3^n}{2d_1+1}$ and fix $T_2\subseteq f_1(T_1)$ such that $|T_2|\leq \frac{m_1}{2d_2+1}$. The probability that $A_p$ contains an antichain $I$ of size at least $(1+\varepsilon)p\ell_n(n)$ with $T_1\cup T_2\subseteq I\subseteq T_1\cup T_2\cup f_2(T_1\cup T_2)$ is at most
\begin{equation}\label{probthing}\mathbb{P}(T_1\cup T_2\subseteq A_p)\cdot\mathbb{P}\left(\left|A_p\cap f_2\left(T_1\cup T_2\right)\right|\geq p(1+\varepsilon)\ell_n(n) - |T_1\cup T_2|\right)\end{equation}
since $\left(T_1\cup T_2\right)\cap f_2\left(T_1\cup T_2\right)=\emptyset$. Note that $|T_1\cup T_2|\leq \frac{100\cdot3^n}{n^2} + \frac{6\ell_n(n)}{\varepsilon n} \leq \frac{10\ell_n(n)}{\varepsilon n}$ for large $n$ by Theorem~\ref{anderson}. Since $p=c(\varepsilon)/n$, we may choose $c(\varepsilon)$ sufficiently large so that $|T_1\cup T_2|\leq \left(\frac{\varepsilon}{4}\right)p\ell_n(n)$. Therefore, the expression in \eqref{probthing} is bounded above by
\[p^{|T_1\cup T_2|}\mathbb{P}\left(|A_p\cap f_2(T_1\cup T_2)|\geq p(1+3\varepsilon/4)\ell_n(n)\right).\]
Applying Theorem~\ref{Cher} with $\delta=\frac{(1+3\varepsilon/4)\ell_n(n) - |f_2(T_1\cup T_2)|}{|f_2(T_1\cup T_2)|}$ we get that
\[\mathbb{P}\left(|A_p\cap f_2(T_1\cup T_2)|\geq p(1+3\varepsilon/4)\ell_n(n)\right)\leq \begin{cases}  e^{-\frac{\delta^2\left|f_2(T_1\cup T_2)\right|p}{3}}& \text{if }0<\delta<1, \\
						  e^{-\frac{\delta\left|f_2(T_1\cup T_2)\right|p}{3}}& \text{if }\delta>1.\end{cases}\]
Since $\left|f_2\left(T_1\cup T_2\right)\right|\leq (1+\varepsilon/2)\ell_n(n)$, the right side is bounded above by $e^{-\varepsilon^2\ell_n(n)p/100}$. Therefore, the probability that $A_p$ contains an antichain of size at least $(1+\varepsilon)p\ell_n(n)$ is at most
\[\sum_{a=0}^{\frac{3^n}{2d_1+1}}\sum_{b=0}^{\frac{m_1}{2d_2+1}}\binom{3^n}{a}\binom{m_1}{b}p^{a+b}e^{-\varepsilon^2\ell_n(n)p/100}\]
For $c(\varepsilon)$ sufficiently large, the quantity $\binom{3^n}{a}\binom{m_1}{b}p^{a+b}$ is increasing in both $a$ and $b$ in the range $0\leq a\leq \frac{3^n}{2d_1+1}$ and $0\leq b\leq \frac{m_1}{2d_2+1}$. Therefore, the above expression is at most
\[ \left(\frac{3^n}{2d_1+1}\right)\left(\frac{m_1}{2d_2+1}\right)\left(3ed_1p\right)^{\frac{3^n}{2d_1+1}}\left(3ed_2p\right)^{\frac{m_1}{2d_2+1}}e^{-\varepsilon^2\ell_n(n)p/100}.\]
By definition of $d_2$ and $p$, we have that $3ed_2p$ is bounded above by a constant multiple of $c(\varepsilon)$ and, since $\ell_n(n) = \Theta\left(m_1\right)$, we have that $\varepsilon^2\ell_n(n)p/100$ is bounded below by a constant multiple (depending only on $\varepsilon$) of $c(\varepsilon)\left(\frac{m_1}{2d_2+1}\right)$. Therefore we may choose $c(\varepsilon)$ large enough so that $\log(3ed_2p)\left(\frac{m_1}{2d_2+1}\right)< \varepsilon^2\ell_n(n)p/200$ and so the above expression is $o(1)$. This completes the proof. 
\end{proof}

To close this section, we remark that none of the proofs in this section required asymptotically sharp supersaturation results. In applying the container method, it is often sufficient to have a supersaturation result which is best possible up to a constant factor (such as the graph supersaturation theorem of Erd\H{o}s and Simonovits~\cite{Hyper}).

\section{Open Problems}
\label{concl}

For general $r\geq1$ one can define a partial order on $\{0,\dots,r\}^n$ where $x\leq y$ if and only if $x_j\leq y_j$ for all $j$. For $0\leq i\leq rn$ let $\ell_i(n,r)$ be the number of elements of $\{0,\dots,r\}^n$ with coordinate sum equal to $i$. We propose the following conjecture. 

\begin{conj}
\label{middleLevels}
Let $r,n$ be positive integers such that $n$ is large with respect to $r$. Then, for $1\leq m\leq (r+1)^n$, the number of comparable pairs in a subset of $\{0,\dots,r\}^n$ of cardinality $m$ is minimised by a set whose elements have coordinate sum as close to $rn/2$ as possible. 
\end{conj}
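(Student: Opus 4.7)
My plan is to adapt the approach used for Theorem~\ref{multisetThm}, which establishes the conjecture in the case $r=2$. The overall strategy is to construct a memoryless probability distribution $\mu$ on the maximal chains of $\{0,\dots,r\}^n$ in which every element of a level $L_i$ is equally likely to be contained, and then apply Lemma~\ref{randomCount}. As in the $r=2$ case, one first partitions $\{0,\dots,r\}^n$ into sub-posets $P_0,\dots,P_{k-1}$ based on rank modulo $k$, then proves the bound for each $P_s$ separately by induction on the excess $t_s$, using Hall's Marriage Theorem together with the normalised matching property to shift elements of small rank upwards without increasing the number of comparable pairs.

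To construct $\mu$, I would proceed level-by-level. An element of $L_i$ in $\{0,\dots,r\}^n$ has a \emph{type} $(n_0,\dots,n_r)$ recording how many coordinates take each value, and moving from $L_i$ to $L_{i+1}$ corresponds to promoting one coordinate from some value $j$ to $j+1$ for $j\in\{0,\dots,r-1\}$. For each type $\tau$ in $L_i$, we must choose non-negative weights $w_{\tau,j}$ (representing the probability of promoting a $j$-coordinate given that an element of type $\tau$ is in the chain) so that, after redistribution, each element of $L_{i+1}$ receives total probability $1/|L_{i+1}|$. The normalised matching property for $\{0,\dots,r\}^n$, established by Anderson~\cite{Andersondivisors}, guarantees that such weights exist; solving the resulting linear system canonically, for instance by requiring invariance under coordinate permutations and the natural complementation map $x\mapsto (r-x_1,\dots,r-x_n)$, is the central construction.

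With $\mu$ in hand, the next step is to generalise Lemma~\ref{conditionalProb} to bound $\mathbb{P}(y\in \mathcal{C}_\mu\mid x\in\mathcal{C}_\mu)$ in terms of an appropriate ratio of level sizes whenever $x,y$ are comparable and their ranks differ by at least $k$. I expect the three-case structure of the original proof to carry over: the ``small gap'' case follows by iterating a single-step bound generalising Corollary~\ref{OneStep} and then applying the memoryless property; the ``large symmetric gap'' case follows from an averaging argument using the natural involution $x\mapsto(r-x_1,\dots,r-x_n)$ together with coordinate-swap automorphisms (as in Claim~\ref{1To2}); and the asymmetric case reduces to these two by the same inductive stripping used in Case~3 of the proof of Lemma~\ref{conditionalProb}.

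The main obstacle is proving the single-step weight bounds that underlie this estimate. In the $r=2$ setting each level has only two types, and Propositions~\ref{wboundPROP} and~\ref{wMaxBoth} rest on direct comparisons between two-parameter sequences via elementary weight-redistribution arguments. For general $r$ each level has on the order of $n^{r-1}$ types and the defining linear system is significantly underdetermined, so one must first pin down a canonical choice of weights and then verify uniform upper bounds $w_{\tau,j}=O(1/|L_{i+1}|)$ for every transition $(\tau,j)$, together with an analogue of Proposition~\ref{wboundPROP} showing that ``upward-biased'' transitions receive at least as much weight as ``sideways'' ones. I anticipate that this will require induction on $r$ (with base case $r=2$ supplied by Section~\ref{multisetSec}), log-concavity and asymptotic estimates for $\ell_i(n,r)$ analogous to those in the appendix, and a careful case analysis around the middle of the poset; however, no fundamentally new ideas beyond those in Section~\ref{multisetSec} seem to be required.
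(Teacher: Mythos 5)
Your proposal has a fundamental problem: the statement you are attempting to prove is a \emph{conjecture}, not a theorem, and the paper does not prove it. Worse, as recorded in the ``Note added in the proof'' at the end of Section~\ref{concl}, Balogh, Pet\v{r}\'{i}\v{c}ov\'{a} and Wagner~\cite{BPW} subsequently \emph{disproved} Conjecture~\ref{middleLevels} for every $r\geq 2$. Any attempted proof is therefore doomed to fail for all $r \geq 2$, and you should look for where your argument breaks down rather than how to complete it.

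There is also a more specific error in your opening premise: you assert that Theorem~\ref{multisetThm} ``establishes the conjecture in the case $r=2$.'' It does not; the paper explicitly states that ``the case $r=2$ of Conjecture~\ref{middleLevels} is strictly stronger than Theorem~\ref{multisetThm}.'' The random-chain method of Lemma~\ref{randomCount}, together with a memoryless distribution on chains, produces \emph{approximate} supersaturation lower bounds of the form $\comp(S)\geq td$ once $|S|$ exceeds the union of several middle levels; it does not identify the exact minimisers for every cardinality $m$. Indeed, even in the boolean lattice case $r=1$ the paper is careful to describe Theorem~\ref{booleanThm} as only an approximate version of Kleitman's result, and Kleitman's actual proof uses quite different ideas. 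So even setting aside the falsity of the conjecture, the strategy you outline---generalising the random-chain machinery of Section~\ref{multisetSec}---is not of the right type to yield an exact extremal characterisation. At best it would give a higher-$r$ analogue of Theorem~\ref{multisetThm}; that is a reasonable goal, but it is a different (and weaker) statement than Conjecture~\ref{middleLevels}, and you should not conflate the two.
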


We remark that the same conjecture was made independently by Balogh and Wagner~\cite{BaloghWagnerKleitman}. The case $r=1$ of Conjecture~\ref{middleLevels} follows from the theorem of Kleitman~\cite{superKleitman} (without the requirement that $n$ is large), but it is open for all other values of $r$. In particular, the case $r=2$ of Conjecture~\ref{middleLevels} is strictly stronger than Theorem~\ref{multisetThm}. The following weak form of Conjecture~\ref{middleLevels} would be good enough for certain applications.

\begin{conj}
[Weak form of Conjecture~\ref{middleLevels}]
Let $r,k$ be positive integers. There exist a positive constant $c(r,k)$ such that for every $S\subseteq \{0,\dots,r\}^n$ we have
\[\comp(S)\geq \left(|S|-k\ell_{\left\lceil rn/2\right\rceil}(n,r)\right)c(r,k)n^k\]
\end{conj}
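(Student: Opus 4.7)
The plan is to follow the template of Theorems~\ref{booleanThm},~\ref{vecSpThm} and~\ref{multisetThm}: partition $\{0,\dots,r\}^n$ into $k$ residue classes by rank modulo $k$, reduce to a per-class supersaturation estimate by induction together with a boundary-shifting step, and establish this per-class estimate via Lemma~\ref{randomCount} applied to a carefully chosen distribution on maximal chains. Concretely, for $0\le j\le k-1$ let $P_j$ be the subposet of elements whose rank is congruent to $\lceil rn/2\rceil+j$ modulo $k$ and set $S_j:=S\cap P_j$. The symmetric chain decomposition of $\{0,\dots,r\}^n$ due to de~Bruijn, van Ebbenhorst Tengbergen and Kruyswijk implies in particular that $(\ell_i(n,r))_i$ is unimodal, so every level of $P_j$ has at most $\ell_{\lceil rn/2\rceil}(n,r)$ elements. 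Hence $\sum_j(|S_j|-\ell_{\lceil rn/2\rceil}(n,r))\ge |S|-k\ell_{\lceil rn/2\rceil}(n,r)$, so it is enough to prove $\comp(S_j)\ge c(r,k)n^k(|S_j|-\ell_{\lceil rn/2\rceil}(n,r))^+$ for each~$j$.

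Fix $j$ and proceed by induction on $|S_j|$, following the structure of the proof of Theorem~\ref{multisetThm}. If some element of $S_j$ is comparable to at least $c(r,k)n^k$ other members of $S_j$, remove it and apply the inductive hypothesis. Otherwise every element of $S_j$ has small comparison-degree, and then the normalised matching property (Theorem~\ref{Kequiv}, which applies since $\{0,\dots,r\}^n$ has LYM by the symmetric chain decomposition) allows us to ``shift'' every element of $S_j$ of very small rank (say, rank below some $O_r(k)$ threshold) up to the next level in $P_j$ without increasing $\comp(S_j)$. The shifting is carried out by the Hall's theorem / pigeonhole trick of Section~\ref{multisetSec}: if Hall's condition failed for a set $T$ of such small-rank elements, then $|\Gamma(T)\cap S_j|>(\ell_{i+k}(n,r)/\ell_i(n,r)-1)|T|$, and since this ratio is $\Theta_r(n^k)$ in the relevant range, some element of $T$ would have comparison-degree exceeding $c(r,k)n^k$, a contradiction. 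A dual argument handles elements of very large rank, so we may assume $S_j$ is supported on a middle range of levels.

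The next step is to construct a memoryless distribution~$\mu$ on the maximal chains of $\{0,\dots,r\}^n$ with the property that $\mathbb{P}(x\in\mathcal{C}_\mu)=1/|L_i|$ for every $x\in L_i$, generalising the construction in Section~\ref{multisetSec}. For each transition $L_i\to L_{i+1}$ and each type $\tau=(s_0,\dots,s_r)$ (with $\sum_v s_v=n$ and $\sum_v vs_v=i$) of element in $L_i$, one sets weights $w_{i,\tau,v}$, $v\in\{0,\dots,r-1\}$ — the conditional probability, given that $\mathcal{C}_\mu$ visits a specified element of type $\tau$ in $L_i$, of next visiting a specified cover obtained by incrementing a coordinate of value~$v$ — determined by the requirement that each element of $L_{i+1}$ be selected with probability $1/|L_{i+1}|$. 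Assuming such a $\mu$ can be built, an analogue of Corollary~\ref{OneStep} should yield $\mathbb{P}(y\in\mathcal{C}_\mu\mid x\in\mathcal{C}_\mu)=O_r(1/n)$ for covers $x<y$ in the middle range, and iterating via the memoryless property~\eqref{memoryless} and summing over the at most $k!$ saturated chains from $x$ to $y$ produces $\mathbb{P}(y\in\mathcal{C}_\mu\mid x\in\mathcal{C}_\mu)=O_{r,k}(n^{-k})$ whenever $x<y$ and the rank difference is $k$. Lemma~\ref{randomCount}, applied to $S_j$ with a comparability digraph $D$ orienting each comparable pair toward whichever rank is closer to $rn/2$ (and arbitrarily for pairs symmetric about $rn/2$), then gives the desired bound.

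The main obstacle is the construction and analysis of the distribution~$\mu$ for general~$r$. For $r=2$ the element types in $L_i$ are parametrised by a single integer (the number of $2$'s), and the weights satisfy a tractable system solved explicitly in Section~\ref{multisetSec}, with non-negativity verified by a delicate redistribution argument (Propositions~\ref{wboundPROP} and~\ref{wMaxBoth}). For general~$r$, the types are compositions $(s_0,\dots,s_r)$ of $n$ with $\sum_v vs_v=i$, and transitions $L_i\to L_{i+1}$ come in $r$ flavours indexed by which coordinate value is incremented, so the analogue of the linear system determining the weights becomes considerably larger. Proving non-negativity of the resulting weights, and establishing the single-step bound $\mathbb{P}(y\in\mathcal{C}_\mu\mid x\in\mathcal{C}_\mu)=O_r(1/n)$, is likely to require either an inductive argument on $r$ or a more structural approach that builds $\mu$ directly from a regular covering by chains (Theorem~\ref{Kequiv}) while preserving the memoryless property~\eqref{memoryless}.
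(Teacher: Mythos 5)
The statement you are asked to prove is in fact a \emph{conjecture}: it appears in Section~\ref{concl} (``Open Problems'') as a weak form of Conjecture~\ref{middleLevels}, and the paper offers no proof of it. Indeed, the ``Note added in the proof'' records that the strong form (Conjecture~\ref{middleLevels}) was later disproved by Balogh, Pet\v{r}\'{i}\v{c}ov\'{a} and Wagner for all $r\ge 2$; the weak form remains open as far as the paper is concerned. So there is no paper proof to compare against.

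Your proposal is an honest and reasonable \emph{outline} of how one might attack the weak conjecture by generalising the paper's proof of Theorem~\ref{multisetThm}, and the reductions you describe (split by rank residue modulo $k$, use normalised matching plus a Hall-type shifting step to push very-low-rank or very-high-rank elements into the middle, and apply Lemma~\ref{randomCount} to a suitable memoryless chain distribution) are all in keeping with the paper's machinery. But you have not produced a proof. The load-bearing step --- constructing, for general $r$, a memoryless distribution $\mu$ on maximal chains of $\{0,\dots,r\}^n$ under which every element of $L_i$ is equally likely, proving non-negativity of the transition weights $w_{i,\tau,v}$, and establishing the single-step bound $\mathbb{P}(y\in\mathcal{C}_\mu\mid x\in\mathcal{C}_\mu)=O_r(1/n)$ plus its $k$-step iterate --- is left entirely to ``an inductive argument on $r$'' or ``a more structural approach,'' with no argument given. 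For $r=2$ these analogues occupy Propositions~\ref{wboundPROP},~\ref{wMaxBoth}, Corollary~\ref{OneStep} and the three-case proof of Lemma~\ref{conditionalProb}, and it is precisely this analysis that does not obviously extend: the types of an element in $L_i$ are now full compositions $(s_0,\dots,s_r)$ rather than a single scalar, the linear constraints pinning down the weights no longer decouple level by level into a one-parameter family, and even existence of a non-negative solution is a non-trivial flow/matching question. You correctly name this obstacle, but naming it is not the same as overcoming it, and the paper deliberately leaves exactly this open. The proposal therefore does not constitute a proof of the statement; it is a programme whose feasibility is precisely what the conjecture asks about.
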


The poset $\{0,\dots,r\}^n$ can also be viewed as the poset of divisors of the $r$th power of a square-free integer with $n$ prime factors. Another natural direction for future work could be to prove a non-trivial supersaturation theorem for the poset of divisors of a general integer (perhaps with the number of distinct prime factors sufficiently large with respect to the largest multiplicity of a prime factor). As mentioned earlier, Anderson~\cite{Andersondivisors} proved that such posets have regular coverings by chains. It seems possible that one could prove a supersaturation theorem for such a poset by considering a chain chosen randomly according to a distribution based on a regular covering by chains and applying Lemma~\ref{randomCount}. The devil, however, is in the details.

Also, one could consider supersaturation problems for other classical posets. In particular, it would also be interesting to prove supersaturation results for posets which do not have regular coverings by chains, or for posets which are not even ranked. 

In another direction, one could try to improve our bounds on the number of antichains in $\Vectors{q}{n}$ and $\{0,1,2\}^n$ (Theorems~\ref{countVec} and~\ref{countMulti}) and obtain similar bounds for other posets. 

\begin{noteAdded}
Following the submission of this paper, Balogh, Pet\v{r}\'{i}\v{c}ov\'{a} and Wagner~\cite{BPW} disproved Conjecture~\ref{middleLevels} for all $r\geq2$ and Samotij~\cite{SamotijKleit} proved the conjecture of Kleitman~\cite{superKleitman} that, for any $k\geq2$ and $1\leq m\leq 2^n$, the number of chains of length $k$ in a subset of $\mathcal{P}(n)$ of size $m$ is minimised by a collection of subsets of $[n]$ of size as close to $n/2$ as possible. 
\end{noteAdded}

  \bibliographystyle{plain}

  \appendix
  \section{Rank Numbers in \texorpdfstring{$\boldsymbol{\Vectors{q}{n}}$}{V(q,n)} and \texorpdfstring{$\boldsymbol{\{0,1,2\}^n}$}{{0,1,2}\textasciicircum n}}
  
We provide a number of results regarding the rank numbers in $\Vectors{q}{n}$ and $\{0,1,2\}^n$ that we require in the paper. The first is a theorem of Wilf~\cite{Wilf} which is used in the proofs of Theorems~\ref{countVec} and~\ref{randVecSp}.
  
\begin{thm}[Wilf~\cite{Wilf}]
\label{wilf}
For a fixed prime power $q$ and $n\to\infty$, we have
\[|\Vectors{q}{n}|=\Theta\left(q^{n^2/2}\right),\text{ and}\]
\[\qbinom{n}{\left\lfloor n/2\right\rfloor}{q}=\Theta\left(q^{n^2/2}\right).\]
In particular, $|\Vectors{q}{n}|=\Theta\left(\qbinom{n}{\left\lfloor n/2\right\rfloor}{q}\right)$.
\end{thm}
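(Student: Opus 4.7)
The plan is to prove both asymptotic estimates directly from the explicit product formula for the Gaussian binomial coefficient, treating $q>1$ as a fixed constant throughout. Write
\[\qbinom{n}{i}{q}=\prod_{j=0}^{i-1}\frac{q^{n-j}-1}{q^{j+1}-1}=q^{\sum_{j=0}^{i-1}(n-2j-1)}\cdot\prod_{j=0}^{i-1}\frac{1-q^{-(n-j)}}{1-q^{-(j+1)}}.\]
The exponent simplifies to $\sum_{j=0}^{i-1}(n-2j-1)=i(n-i)$. The first step, therefore, is to show that the second product is bounded between positive constants depending only on $q$ uniformly in $n$ and in $i\leq n$; this is because $\prod_{j=0}^{\infty}(1-q^{-(j+1)})^{-1}$ is a convergent infinite product (the $q$-Pochhammer symbol) and $\prod_{j=0}^{i-1}(1-q^{-(n-j)})\to 1$ as $n\to\infty$. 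Hence for all $0\le i\le n$ we have $\qbinom{n}{i}{q}=\Theta_q\bigl(q^{i(n-i)}\bigr)$.

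Specialising to $i=\lfloor n/2\rfloor$ yields the central binomial estimate, since $i(n-i)=\lfloor n/2\rfloor\lceil n/2\rceil$ and this is the largest exponent that appears (which is the unimodality statement noted in the introduction and which follows trivially from $i(n-i)$ being maximised at $i=n/2$). This completes the proof of the second displayed asymptotic.

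For the total count $|\mathcal{V}(q,n)|=\sum_{i=0}^n\qbinom{n}{i}{q}$, the plan is to sandwich the sum between its central term and $(n+1)$ times the central term. The lower bound is immediate. For the upper bound, the ratio $\qbinom{n}{i}{q}/\qbinom{n}{\lfloor n/2\rfloor}{q}$ is of order $q^{i(n-i)-\lfloor n/2\rfloor\lceil n/2\rceil}$ by the estimate from the first paragraph, and this exponent is at most $0$ and moreover decays quadratically as $i$ moves away from $n/2$. Consequently
\[|\mathcal{V}(q,n)|=\qbinom{n}{\lfloor n/2\rfloor}{q}\cdot\Bigl(1+O\bigl(\textstyle\sum_{r\ge 1}q^{-r^2}\bigr)\Bigr),\]
so the sum is in fact only a bounded (and in particular $\Theta(1)$) multiple of the central term. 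Combined with the first paragraph this gives $|\mathcal{V}(q,n)|=\Theta_q(q^{\lfloor n/2\rfloor\lceil n/2\rceil})$, matching the central binomial and proving the remaining statements.

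The only mildly delicate step is verifying that the $q$-Pochhammer correction factor is bounded between two positive constants independent of $n$; once that is in hand everything else is a one-line calculation with the exponent $i(n-i)$ and a geometric-tail estimate around the central level. There is no real obstacle — the result is essentially a book-keeping exercise given the product formula — and the argument only uses that $q\ge 2$ is a fixed integer.
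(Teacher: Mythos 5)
Your computation is substantively correct and uses the standard route: isolate the dominant power of $q$ from the product formula, bound the residual factor by the convergent $q$-Pochhammer constant, and sandwich $|\Vectors{q}{n}|$ around its central term using the quadratic decay of $q^{i(n-i)}$. The paper itself gives no proof here --- it cites Wilf --- so there is no in-text argument to compare against.

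However, there is a discrepancy that you should have flagged rather than glossed over. The exponent you derive, $\lfloor n/2\rfloor\lceil n/2\rceil=\lfloor n^2/4\rfloor$, does \emph{not} match the exponent $n^2/2$ appearing in the theorem as printed. You correctly establish $\qbinom{n}{i}{q}=\Theta_q\bigl(q^{i(n-i)}\bigr)$ uniformly in $0\le i\le n$, and therefore $\qbinom{n}{\lfloor n/2\rfloor}{q}=\Theta_q\bigl(q^{n^2/4}\bigr)$ and $|\Vectors{q}{n}|=\Theta_q\bigl(q^{n^2/4}\bigr)$; this is what Wilf actually proves, and the printed $n^2/2$ is a typographical error. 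Your proposal announces that the central estimate ``completes the proof of the second displayed asymptotic,'' but your own (correct) calculation contradicts that display, and that mismatch deserves an explicit remark. Fortunately the only consequence used downstream in the paper is the ``in particular'' comparison $|\Vectors{q}{n}|=\Theta\bigl(\qbinom{n}{\lfloor n/2\rfloor}{q}\bigr)$, which your argument does establish and which is unaffected by the misprint.

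One smaller inaccuracy: the assertion ``$\prod_{j=0}^{i-1}\bigl(1-q^{-(n-j)}\bigr)\to 1$ as $n\to\infty$'' is not uniform in $i\le n$; for $i=n$ the product converges to $\prod_{m\ge 1}(1-q^{-m})<1$, not to $1$. The clean statement, and the one you actually need, is that both $\prod_{j=0}^{i-1}(1-q^{-(n-j)})=\prod_{m=n-i+1}^{n}(1-q^{-m})$ and the denominator $\prod_{m=1}^{i}(1-q^{-m})$ lie in the fixed interval $\bigl[\prod_{m\ge1}(1-q^{-m}),\,1\bigr]$ for every $n$ and every $0\le i\le n$, so the correction factor is trapped between a positive constant and its reciprocal, uniformly. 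Phrase it that way and the bound is both correct and fully uniform in $i$.
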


The following result of Anderson~\cite{AndersonVariance} is used in the proofs of Theorems~\ref{countMulti} and~\ref{randMultiset}.

\begin{thm}[Anderson~\cite{AndersonVariance}]
\label{anderson}
$\ell_n(n)=\Theta\left(\frac{3^n}{\sqrt{n}}\right)$.
\end{thm}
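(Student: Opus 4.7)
My plan is to deduce this from a local central limit theorem applied to sums of i.i.d.\ uniform $\{0,1,2\}$ random variables. Let $X_1,\dots,X_n$ be independent and uniform on $\{0,1,2\}$ and let $S_n := X_1+\cdots+X_n$. Then $(X_1,\dots,X_n)$ is uniformly distributed on $\{0,1,2\}^n$, so
\[\ell_n(n) = 3^n \cdot \mathbb{P}(S_n = n).\]
It therefore suffices to show that $\mathbb{P}(S_n = n) = \Theta(1/\sqrt{n})$. A direct computation gives $\mathbb{E}[X_i] = 1$ and $\mathrm{Var}(X_i) = \mathbb{E}[X_i^2] - 1 = 5/3 - 1 = 2/3$, so $\mathbb{E}[S_n] = n$ and $\mathrm{Var}(S_n) = 2n/3$. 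Since the $X_i$ are integer-valued and the support $\{0,1,2\}$ has span $1$ (i.e., $\gcd$ of differences equals $1$), the standard lattice local CLT yields
\[\mathbb{P}(S_n = n) \;=\; \frac{1+o(1)}{\sqrt{2\pi\cdot 2n/3}},\]
which immediately gives $\mathbb{P}(S_n=n)=\Theta(1/\sqrt{n})$ and hence the claimed bound.

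If one prefers to avoid citing the LCLT, the same conclusion can be obtained by an elementary Stirling-based analysis of the explicit sum
\[\ell_n(n) \;=\; \sum_{s=0}^{\lfloor n/2\rfloor}\binom{n}{n-s}\binom{n-s}{s} \;=\; \sum_{s=0}^{\lfloor n/2\rfloor}\frac{n!}{(s!)^2(n-2s)!},\]
whose summands are the numbers of vectors with exactly $s$ coordinates equal to $2$, $s$ equal to $0$, and $n-2s$ equal to $1$. A routine ratio test shows that the summand $a_s := n!/((s!)^2(n-2s)!)$ is unimodal and maximised at $s^\ast := \lfloor n/3\rfloor$; Stirling's formula then gives $a_{s^\ast} = \Theta(3^n/n)$. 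For the upper bound, $\ell_n(n)\le (n/2+1)\,a_{s^\ast} = O(3^n/\sqrt{n})$ is too weak by a factor of $\sqrt{n}$, so I would sharpen it by observing that $a_{s^\ast + j}/a_{s^\ast} \le e^{-c j^2/n}$ for some absolute $c>0$ and $|j|\le n^{2/3}$ (a standard second-derivative-of-$\log a_s$ computation), plus a crude exponential decay outside that window; summing the Gaussian-like tail gives $\ell_n(n) = O(\sqrt{n}\cdot a_{s^\ast}) = O(3^n/\sqrt{n})$. The matching lower bound follows from retaining only the $\Theta(\sqrt{n})$ terms with $|s - s^\ast| \le \sqrt{n}$, each of which has size $\Theta(a_{s^\ast}) = \Theta(3^n/n)$.

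The only subtle step is the quadratic decay estimate on $a_{s^\ast + j}/a_{s^\ast}$; this is the main technical obstacle if one avoids black-boxing the local CLT, but it reduces to a one-line Taylor expansion of $\log a_s$ about $s = n/3$, so it is routine. Either route delivers $\ell_n(n) = \Theta(3^n/\sqrt{n})$, which is the statement of Theorem~\ref{anderson}.
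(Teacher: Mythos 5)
The paper does not actually prove this statement --- it is cited as a known result of Anderson~\cite{AndersonVariance}, whose original argument (per the title of that paper) is a second-moment ``variance method.'' Your two proposed routes --- invoking the lattice local central limit theorem for i.i.d.\ uniform variables on $\{0,1,2\}$, or directly analysing the trinomial sum $\ell_n(n)=\sum_{s}\frac{n!}{(s!)^2(n-2s)!}$ via Stirling and a Gaussian-window estimate around $s^\ast\approx n/3$ --- are both correct and both self-contained alternatives to the cited result, so this is a genuinely different route rather than a reproduction of the paper's argument. The LCLT route is cleanest if you are content to black-box a standard probabilistic theorem, while the Stirling route is fully elementary and actually delivers the sharper asymptotic $\ell_n(n)\sim \sqrt{3/(4\pi n)}\,3^n$, not merely the $\Theta$ bound needed here. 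One small slip in the write-up: where you write ``$\ell_n(n)\le (n/2+1)\,a_{s^\ast}=O(3^n/\sqrt{n})$ is too weak by a factor of $\sqrt{n}$,'' the displayed bound should read $O(3^n)$ (since $a_{s^\ast}=\Theta(3^n/n)$), which is indeed the quantity that is a factor $\sqrt{n}$ too weak; the conclusion and the subsequent sharpening are unaffected.
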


A sequence $a_0,\dots,a_N$ is said to be \emph{log-concave} if $\frac{a_{i+1}}{a_i}\leq \frac{a_i}{a_{i-1}}$ for $1\leq i\leq N-1$. In the proof of Theorem~\ref{multisetThm} we use the following theorem, which has been proven in several different papers (see Chapter 4 of~\cite{AndersonBook} for references). To our knowledge, the earliest proof is due to Anderson~\cite{Andersondivisors}.

\begin{thm}[Anderson~\cite{Andersondivisors}]
\label{logConcave}
The sequence $\ell_0(n),\dots,\ell_{2n}(n)$ is log-concave. 
\end{thm}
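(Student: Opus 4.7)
The plan is to establish the log-concavity inequality $\ell_i(n)^2 \geq \ell_{i-1}(n)\ell_{i+1}(n)$ for each $1\leq i\leq 2n-1$ by exhibiting an injection from $L_{i-1}\times L_{i+1}$ to $L_i\times L_i$, where $L_j = L_j(n)$ denotes the set of vectors in $\{0,1,2\}^n$ whose coordinates sum to $j$.

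The injection will take a pair $(x,y) \in L_{i-1}\times L_{i+1}$ to the pair $(x',y')$ obtained by ``moving a single unit'' from $y$ to $x$ at a canonically chosen coordinate $j = j(x,y)$: specifically, $j$ will be the smallest coordinate satisfying $y_j > 0$ and $x_j<2$, and we set $x' = x + e_j$, $y' = y - e_j$, where $e_j$ is the standard basis vector. The first step of the argument is to verify that this canonical coordinate always exists, which comes down to observing that if every coordinate $k$ satisfies $y_k = 0$ or $x_k = 2$, then $y_k - x_k \leq 0$ for all $k$, contradicting $|y|-|x|=2$. The constraints defining $j$ then guarantee $x'_j \in \{1,2\}$ and $y'_j\in \{0,1\}$, so $(x',y') \in L_i \times L_i$.

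The second and main step is verifying injectivity. The cleanest way to do so is to characterise the image of the map and then describe an explicit inverse on that image. The image consists of those $(x',y') \in L_i \times L_i$ for which some coordinate $j$ satisfies $x'_j \geq 1$, $y'_j \leq 1$, and every $k<j$ has $y'_k=0$ or $x'_k=2$; for such $(x',y')$, taking the smallest such $j$ and setting $(x,y) := (x' - e_j,\; y' + e_j)$ yields the preimage. One then verifies that re-running the definition of $j(\cdot,\cdot)$ on this candidate preimage returns the same index $j$ (for $k<j$ either $y_k=y'_k=0$ or $x_k=x'_k=2$, while at $j$ itself $y_j\geq 1$ and $x_j\leq 1$), confirming that $\phi$ is a bijection onto its image and hence injective.

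The main obstacle, if any, is just bookkeeping: one must confirm that the canonical-$j$ rule survives round-tripping through the map, which is a short case analysis. An alternative route would be to observe that the generating function identity $\sum_i \ell_i(n) x^i = (1+x+x^2)^n$ reduces the claim to the statement that log-concavity is preserved under convolution of non-negative sequences with no internal zeros, which then gives the result by induction on $n$ from the trivial base case $(1,1,1)$; this is cleaner algebraically but relies on an external lemma whose proof is itself a short but slightly fiddly rearrangement, so the direct injection above is probably preferable.
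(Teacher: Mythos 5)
Your main argument has a genuine gap: the map $\phi$ you construct is not injective. Take $n=3$, $i=3$, and consider the two pairs $\big((2,0,0),(0,2,2)\big)$ and $\big((1,1,0),(1,1,2)\big)$, both in $L_2\times L_4$. For the first pair the smallest coordinate $j$ with $y_j>0$ and $x_j<2$ is $j=2$ (coordinate $1$ is blocked since $y_1=0$), giving $(x',y')=\big((2,1,0),(0,1,2)\big)$. For the second pair the smallest such coordinate is $j=1$, giving $(x',y')=\big((2,1,0),(0,1,2)\big)$ again. So $\phi$ collides. The error in your injectivity verification is that you only check that the forward index $j$ \emph{is} a valid reverse index (i.e., $x'_j\geq 1$, $y'_j\leq 1$, and the prefix condition holds); you never show, and in fact cannot show, that it is the \emph{smallest} such index. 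A coordinate $k<j$ that was blocked going forward because $y_k=0$ can be unblocked going backward whenever $x_k\geq 1$ (and symmetrically with $x_k=2$, $y_k\leq 1$), so $\psi\circ\phi\neq\mathrm{id}$. In the example above, $\psi$ applied to $\big((2,1,0),(0,1,2)\big)$ picks $j'=1$ and returns $\big((1,1,0),(1,1,2)\big)$, not the original pair.

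Your fallback route is the one to take: since $\sum_i \ell_i(n)x^i=(1+x+x^2)^n$, the sequence $(1,1,1)$ is log-concave with no internal zeros, and the class of log-concave sequences with no internal zeros is closed under convolution, the result follows by induction on $n$. That closure lemma is a genuine (if standard) theorem, but it is correct and this gives a complete proof. For reference, the paper does not prove Theorem~\ref{logConcave} itself but cites Anderson, so there is no in-paper argument to compare against; Anderson's original approach proceeds through the machinery of regular chain coverings rather than either of the two routes you sketch.
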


We now provide some quantitative upper and lower bounds on the ratios $\ell_{i+1}(n)/\ell_i(n)$ which we used in Section~\ref{multisetSec}. We obtain different bounds depending on whether $1\leq i\leq n-3$, $i=n-2$ or $i=n-1$.

\begin{lem}
\label{ratio1}
For $n\geq 4$ and $1\leq i\leq n-3$,
\[\frac{n+1}{i+1}\leq \frac{\ell_{i+1}(n)}{\ell_i(n)}\leq \frac{\left\lceil i/2\right\rceil - \left\lfloor i/2\right\rfloor}{\left \lceil i/2\right\rceil} + \frac{n-\left\lceil i/2\right\rceil}{\left\lceil (i+2)/2\right\rceil}.\]
\end{lem}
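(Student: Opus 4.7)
The plan is to adapt the weight-redistribution trick from Proposition~\ref{wboundPROP}. Assign weight $1$ to every element of $L_{i+1}$, and let each $y \in L_{i+1}^t$ send weight $1/(i+1-t)$ to each of its $i+1-t$ covering neighbours in $L_i$. Since $x \in L_i^s$ is covered by exactly $n-i+s$ elements of $L_{i+1}^s$ and $i-2s$ elements of $L_{i+1}^{s+1}$, the total weight that $x$ receives is
\[g(s) := \frac{n-i+s}{i+1-s} + \frac{i-2s}{i-s},\]
and conservation of mass yields the identity
\[\ell_{i+1}(n) = \sum_{s=0}^{\lfloor i/2 \rfloor} g(s)\,|L_i^s|.\]
Combined with $\ell_i(n) = \sum_s |L_i^s|$, both bounds will follow from identifying the extremes of $g$.

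A direct computation gives $g(0) = (n-i)/(i+1) + 1 = (n+1)/(i+1)$, which matches the claimed lower bound exactly. Evaluating $g(\lfloor i/2 \rfloor)$ separately by parity of $i$, one obtains $(2n-i)/(i+2)$ when $i$ is even and $2/(i+1) + (2n-i-1)/(i+3)$ when $i$ is odd; in either case, this equals the right-hand side of the claimed upper bound after rewriting in terms of $\lceil i/2 \rceil$ and $\lfloor i/2 \rfloor$. Hence it suffices to show that $g$ is non-decreasing on $\{0, 1, \ldots, \lfloor i/2 \rfloor\}$.

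Routine algebra yields
\[g(s+1)-g(s) = \frac{(n+1-i)(i-s) - (n+1+i)}{(i-s-1)(i-s)(i+1-s)},\]
so the monotonicity reduces to verifying $(n+1-i)(i-s) \geq n+1+i$ for every $0 \leq s \leq \lfloor i/2\rfloor - 1$. Since $i - s \geq \lceil i/2 \rceil + 1$ throughout this range, plugging in this lower bound and separating by parity of $i$ reduces the inequality (after cancellation) to $i(n-i-3) \geq 0$ when $i$ is even, and to $n(i+1) \geq i(i+4) - 1$ when $i$ is odd. Both hold under the hypothesis $n \geq i+3$; the odd case follows from the easy estimate $n(i+1) \geq (i+3)(i+1) = i(i+4) + 3$.

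The main obstacle will be the careful parity bookkeeping in the monotonicity step and verifying that the condition $i \leq n-3$ is sharp: for $i \in \{n-2, n-1\}$ the function $g$ is no longer monotone and the upper bound of the lemma actually fails, which is precisely why the companion Lemmas~\ref{ration-2} and~\ref{ration-1} have to be stated and proved separately.
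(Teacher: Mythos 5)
Your proof is correct and essentially reproduces the paper's argument: the same downward weight-redistribution from $L_{i+1}$ to $L_i$ yields the same function (your $g$ is the paper's $f$, since $i-s+1=i+1-s$), and the monotonicity step differs only in routine algebraic packaging (you substitute $i-s\geq\lceil i/2\rceil+1$ and split by parity, while the paper substitutes $s\leq(i-2)/2$ directly, both reducing to $i(n-i-3)\geq 0$). The closing remark about sharpness of $i\leq n-3$ is consistent with why Lemmas~\ref{ration-2} and~\ref{ration-1} are stated separately.
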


\begin{proof}
Begin by giving each element $y$ of $L_{i+1}$ a `weight' of $1$ and then redistributing the weight of $y$ evenly among the $x\in L_i$ with $x<y$. For $0\leq s\leq \left\lfloor i/2\right\rfloor$, the weight recieved by $x\in L_i^s$ is precisely
\[f(s):=\frac{i-2s}{i-s} + \frac{n-i+s}{i-s+1}.\]
Note that $f(s)$ is well defined as $i\geq1$ and so $s < i$. We claim that $f(s)$ is non-decreasing for $s$ in the range $0\leq s\leq \left\lfloor i/2\right\rfloor$. Indeed, for $i\geq2$ and $0\leq s\leq \left\lfloor (i-2)/2\right\rfloor$, we have
\[f(s+1)-f(s) = \left(\frac{i-2s-2}{i-s-1} + \frac{n-i+s+1}{i-s}\right) - \left(\frac{i-2s}{i-s} + \frac{n-i+s}{i-s+1}\right)\]
\[=\frac{s(-n +i - 1) + ni - n -i^2-1}{(i-s+1)(i-s)(i-s-1)}.\]
Since $n>i$ and $s\leq \frac{i-2}{2}$, this expression is bounded below by
\[\frac{(i-2)(-n+i-1) + 2ni-2n-2i^2-2}{2(i-s+1)(i-s)(i-s-1)} = \frac{i(n-i-3)}{2(i-s+1)(i-s)(i-s-1)}\]
which is non-negative since $i\leq n-3$. Therefore, the total weight is bounded below by $|L_i|$ multiplied by the weight recieved by an element of $L_i^0$ and bounded above by $|L_i|$ multiplied by the weight recieved by an element of $L_i^{\left\lfloor i/2\right\rfloor}$. However, clearly, the total weight is precisely $|L_{i+1}|$ and so we obtain
\[\left(\frac{n+1}{i+1}\right)|L_i|\leq |L_{i+1}| \leq \left(\frac{\left\lceil i/2\right\rceil - \left\lfloor i/2\right\rfloor}{\left \lceil i/2\right\rceil} + \frac{n-\left\lceil i/2\right\rceil}{\left\lceil (i+2)/2\right\rceil}\right)|L_i|\]
as desired. 
\end{proof}

\begin{lem}
\label{ration-2}
For $n\geq5$ we have 
\[\frac{n+2}{n}\leq \frac{\ell_{n-1}(n)}{\ell_{n-2}(n)}\leq \frac{4n+7}{4n-2}\]	
\end{lem}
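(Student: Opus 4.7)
Proof plan. I will follow the weight-redistribution template of Lemma~\ref{ratio1}. Give each $y\in L_{n-1}$ unit weight and spread it evenly among its $n-1-t$ predecessors in $L_{n-2}$ (where $t$ is the number of $2$'s in $y$). The total weight accrued by $x\in L_{n-2}^s$ is then
\[
f(s) \;=\; \frac{n-2-2s}{n-2-s} + \frac{s+2}{n-1-s} \;=\; 1 + \frac{2n-3s-4}{(n-1-s)(n-2-s)},
\]
and summing gives the identity $|L_{n-1}| = \sum_{s} f(s)|L_{n-2}^s|$. Thus $\ell_{n-1}(n)/\ell_{n-2}(n)$ is a weighted average of the values $f(s)$ against the distribution $\{|L_{n-2}^s|/|L_{n-2}|\}$, and both bounds amount to estimating this average.

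For the lower bound, a direct calculation gives
\[
f(s+1)-f(s) \;=\; \frac{n-5-3s}{(n-1-s)(n-2-s)(n-3-s)},
\]
so $f$ is unimodal on its domain with turning point near $s=(n-5)/3$. The minimum of $f$ over $\{0,\dots,\lfloor(n-2)/2\rfloor\}$ is therefore attained at an endpoint, and direct substitution gives $f(0)=\frac{n+1}{n-1}$ and $f(\lfloor (n-2)/2\rfloor)\in\{\frac{n+2}{n},\frac{n+1}{n-1}\}$, depending on the parity of $n$; both are at least $\frac{n+2}{n}$. The lower bound follows.

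The upper bound is the delicate step, since $f$ is \emph{not} pointwise bounded by $\frac{4n+7}{4n-2}$. Expanding yields
\[
f(s) - \frac{4n+7}{4n-2} \;=\; \frac{-9\bigl(s-\tfrac{n-5}{3}\bigr)\bigl(s-\tfrac{n-2}{3}\bigr)}{(n-1-s)(n-2-s)(4n-2)},
\]
so $f(s)$ exceeds the target only for $s$ in the open interval $\bigl(\tfrac{n-5}{3},\tfrac{n-2}{3}\bigr)$, which has length $1$ and thus contains at most one integer $s^{*}$. When $n\equiv 2\pmod{3}$ no such integer exists and the weighted-average bound is immediate. Otherwise $s^{*}=\lfloor (n-2)/3\rfloor$, and I will combine the term at $s^{*}$ with its neighbors $s^{*}\pm 1$, using the closed form $|L_{n-2}^s|=\frac{n!}{s!(s+2)!(n-2-2s)!}$ and the ratio $\frac{|L_{n-2}^{s+1}|}{|L_{n-2}^s|}=\frac{(n-2-2s)(n-3-2s)}{(s+1)(s+3)}$, to show that the lone positive contribution is dominated by the negative ones from its neighbors. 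After clearing denominators this reduces to a polynomial inequality in $n$ which holds for all $n\ge 5$.

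The main obstacle is exactly this last cancellation argument: in contrast to Lemma~\ref{ratio1}, a pointwise bound on $f$ does not suffice, so one must exploit the shape of $\{|L_{n-2}^s|\}$ (which is unimodal and peaks near the same value $\approx (n-2)/3$ where $f$ does) to offset the single exceptional term against its neighbors.
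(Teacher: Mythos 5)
You have correctly identified a subtle issue that the paper's own proof overlooks. From the unimodality of $f$, the paper concludes $|L_{n-1}|\le f\bigl(\frac{n-2}{3}\bigr)|L_{n-2}|$, which implicitly requires the discrete maximum $f\bigl(\lfloor\frac{n-2}{3}\rfloor\bigr)$ to be at most the real-argument evaluation $f\bigl(\frac{n-2}{3}\bigr)=\frac{4n+7}{4n-2}$. Your factorisation $f(s)-\frac{4n+7}{4n-2}=\frac{-9\bigl(s-\frac{n-5}{3}\bigr)\bigl(s-\frac{n-2}{3}\bigr)}{(n-1-s)(n-2-s)(4n-2)}$ is correct, and it shows this fails whenever $n\not\equiv 2\pmod 3$: for instance with $n=7$ one has $f(1)=\frac{27}{20}>\frac{35}{26}$. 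The lemma's conclusion is nonetheless true, since only a bound on the weighted average $\sum_s f(s)|L_{n-2}^s|/|L_{n-2}|$ is needed, so your averaging repair is the right idea; your lower-bound argument is complete and matches the paper's.

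For the upper bound, your plan is sound and in fact cleaner than you suggest: the negative term at $s^{*}+1$ alone dominates the positive term at $s^{*}$. One has $\bigl(s^{*}-\frac{n-5}{3}\bigr)\bigl(s^{*}-\frac{n-2}{3}\bigr)=-\frac{2}{9}$ whenever $n\not\equiv 2\pmod 3$, and combining this with $|L_{n-2}^{s^{*}}|/|L_{n-2}^{s^{*}+1}|=\frac{(s^{*}+1)(s^{*}+3)}{(n-2-2s^{*})(n-3-2s^{*})}$ one computes that the ratio $\frac{(f(s^{*})-c)|L_{n-2}^{s^{*}}|}{-(f(s^{*}+1)-c)|L_{n-2}^{s^{*}+1}|}$ (with $c=\frac{4n+7}{4n-2}$) equals $\frac{(2n-5)(n+5)}{2(2n+1)(n+2)}$ when $n\equiv 1\pmod 3$ and $\frac{n+6}{5n}$ when $n\equiv 0\pmod 3$, both of which are at most $1$ for all $n\ge 5$. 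Your write-up does not carry out this verification --- you assert that the resulting polynomial inequality ``holds for all $n\ge 5$'' without exhibiting or checking it, and you also do not confirm that $s^{*}+1\le\lfloor\frac{n-2}{2}\rfloor$ so that the neighbouring term actually exists --- and those are the concrete steps that remain before your upper bound is proved.
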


\begin{proof}
Distribute weights to the elements of $L_i$ in the same way as in the proof of Lemma~\ref{ratio1} and define $f(s)$ in the same way as well, except that $i=n-2$. As in that proof, for $0\leq s\leq \left\lfloor (n-4)/2\right\rfloor$, we have
\[f(s+1)-f(s) =\frac{s(-n + (n-2) - 1) + n(n-2) - n -(n-2)^2-1}{((n-2)-s+1)((n-2)-s)((n-2)-s-1)}\]
\[= \frac{-3s + n-5}{(n-1-s)(n-2-s)(n-3-s)}\]
which is non-negative if and only if $s\leq \frac{n-5}{3}$. So, we have that $f(s)$ is non-decreasing in the range $0\leq s\leq \left\lfloor\frac{n-2}{3}\right\rfloor$ and non-increasing in the range $\left\lfloor\frac{n-2}{3}\right\rfloor\leq s\leq\frac{n-2}{2}$. It follows that
\[\min\left\{f(0),f\left(\frac{n-2}{2}\right)\right\}|L_i|\leq |L_{i+1}| \leq f\left(\frac{n-2}{3}\right)|L_i|\]
and the result follows since $f\left(\frac{n-2}{2}\right) = \frac{n+2}{n} <\frac{n+1}{n-1} = f(0)$ and $f\left(\frac{n-2}{3}\right) = \frac{4n+7}{4n-2}$. 
\end{proof}

\begin{lem}
\label{ration-1}
For $n\geq5$ we have 
\[\frac{\left\lceil (n-1)/2\right\rceil - \left\lfloor (n-1)/2\right\rfloor}{\left \lceil (n-1)/2\right\rceil} + \frac{\left\lfloor (n+1)/2\right\rfloor}{\left\lceil (n+1)/2\right\rceil}\leq \frac{\ell_{n}(n)}{\ell_{n-1}(n)}\leq \frac{n+1}{n}\]	
\end{lem}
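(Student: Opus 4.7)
The plan is to mirror the weight-redistribution argument used in the proofs of Lemmas~\ref{ratio1} and~\ref{ration-2}. Set $i=n-1$. Give each element of $L_n$ weight $1$ and redistribute this weight evenly down to the elements of $L_{n-1}$ below it; each $y\in L_n^t$ contributes $\frac{1}{n-t}$ to each of the $n-t$ elements of $L_{n-1}$ comparable to it. Then an element $x\in L_{n-1}^s$ (which sits below exactly $n-1-2s$ elements of $L_n^{s+1}$ and $s+1$ elements of $L_n^s$) receives total weight
\[f(s):=\frac{n-1-2s}{n-1-s}+\frac{s+1}{n-s},\]
valid for $0\le s\le \left\lfloor (n-1)/2\right\rfloor$.

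Next, I would show $f$ is strictly decreasing on this range. Substituting $i=n-1$ into the expression for $f(s+1)-f(s)$ derived in the proof of Lemma~\ref{ratio1} gives
\[f(s+1)-f(s)=\frac{-2(s+1)}{(n-s)(n-1-s)(n-2-s)},\]
which is negative whenever the denominator is positive; for $n\ge 5$ and $s$ in the relevant range, this is the case. Since the total weight distributed equals $|L_n|=\ell_n(n)$, and the total weight received equals $\sum_s |L_{n-1}^s|f(s)$, sandwiching by the minimum and maximum values of $f$ yields
\[\min_s f(s)\cdot \ell_{n-1}(n)\le \ell_n(n)\le \max_s f(s)\cdot \ell_{n-1}(n).\]

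The maximum occurs at $s=0$, and $f(0)=1+\tfrac{1}{n}=\tfrac{n+1}{n}$, establishing the upper bound. The minimum occurs at $s=\left\lfloor (n-1)/2\right\rfloor$. Splitting on the parity of $n$: if $n$ is odd, then $s=(n-1)/2$ gives $f=0+1=1$; if $n$ is even, then $s=(n-2)/2$ gives $f=\tfrac{2}{n}+\tfrac{n}{n+2}$. A direct comparison shows that in both cases this value equals
\[\frac{\left\lceil (n-1)/2\right\rceil -\left\lfloor (n-1)/2\right\rfloor}{\left\lceil (n-1)/2\right\rceil}+\frac{\left\lfloor (n+1)/2\right\rfloor}{\left\lceil (n+1)/2\right\rceil},\]
which is exactly the lower bound in the statement.

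There is no real obstacle here; the argument is entirely parallel to Lemmas~\ref{ratio1} and~\ref{ration-2}, and the only thing to be careful about is matching the parity cases when evaluating $f$ at $\left\lfloor (n-1)/2\right\rfloor$ against the (deliberately parity-neutral) expression written in the statement.
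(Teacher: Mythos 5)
Your proposal is correct and mirrors the paper's proof: the paper also sets $i=n-1$ in the weight-redistribution argument of Lemma~\ref{ratio1}, computes $f(s+1)-f(s)=\frac{-2(s+1)}{(n-s)(n-1-s)(n-2-s)}$ to conclude that $f$ is decreasing, and reads off $f(0)=\frac{n+1}{n}$ and $f\left(\left\lfloor(n-1)/2\right\rfloor\right)$ as the two endpoints. The only difference is presentational: you explicitly split the evaluation of $f$ at $\left\lfloor(n-1)/2\right\rfloor$ into parity cases, whereas the paper states the parity-neutral floor/ceiling expression directly; both are the same computation.
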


\begin{proof}
Proceed as in the previous two proofs. This time, we have 
\[f(s+1)-f(s) =\frac{s(-n + (n-1) - 1) + n(n-1) - n -(n-1)^2-1}{((n-1)-s+1)((n-1)-s)((n-1)-s-1)}\]
\[=\frac{-2(s+1)}{(n-s)(n-1-s)(n-2-s)}\]
and so $f(s)$ is decreasing in $s$ for $0\leq s \leq \left\lfloor \frac{n-1}{2}\right\rfloor$. Therefore,
\[f\left(\left\lfloor \frac{n-1}{2}\right\rfloor\right)|L_i|\leq |L_{i+1}|\leq f(0)|L_i|\]
and we are done since $f\left(\left\lfloor \frac{n-1}{2}\right\rfloor\right) = \frac{\left\lceil (n-1)/2\right\rceil - \left\lfloor (n-1)/2\right\rfloor}{\left \lceil (n-1)/2\right\rceil} + \frac{\left\lfloor (n+1)/2\right\rfloor}{\left\lceil (n+1)/2\right\rceil}$ and $f(0) = \frac{n+1}{n}$. 
\end{proof}

\end{document}